\documentclass[11pt]{amsart}

\usepackage{amsmath}
\usepackage{amssymb,amsthm,amsfonts}
\usepackage{amsrefs}
\usepackage[mathscr]{euscript}
\usepackage[margin=1.8cm]{geometry}
\usepackage{graphicx}
\usepackage{wrapfig}
\usepackage[usenames,dvipsnames,svgnames,table]{xcolor}

\renewcommand{\mathcal}{\mathscr}

\def\R {\mathbb{R}}

\def\N {\mathbb{N}}

\def\Z {\mathbb{Z}}

\renewcommand{\epsilon}{\varepsilon}
\newcommand{\eps}{\varepsilon}

\renewcommand{\leq}{\leqslant}
\renewcommand{\le}{\leqslant}
\renewcommand{\geq}{\geqslant}
\renewcommand{\ge}{\geqslant}

\newcommand{\osc}[1]{{\underset{#1}{\operatorname{osc}\,}}}

\newtheorem{proposition}{Proposition}[section]
\newtheorem{theorem}[proposition]{Theorem}
\newtheorem{corollary}[proposition]{Corollary}
\newtheorem{lemma}[proposition]{Lemma}
\theoremstyle{definition}

\newtheorem{remark}[proposition]{Remark}
\numberwithin{equation}{section}

\allowdisplaybreaks

\begin{document}
\title[Heteroclinic connections and Dirichlet problems]{Heteroclinic connections and Dirichlet problems \\
for a nonlocal functional of oscillation type}

\thanks{SD and EV are supported by the Australian Research Council
Discovery Project ``N.E.W. Nonlocal Equations at Work'' DP170104880. 
MN is partially supported by the University of Pisa Project 
PRA 2017 ``Problemi di ottimizzazione e di evoluzione in ambito variazionale''.
The authors are members of INdAM--GNAMPA}

\thanks{
{\em Annalisa Cesaroni}:
Dipartimento di Scienze Statistiche,
Universit\`a di Padova, Via Battisti 241/243, 35121 Padova, Italy. {\tt annalisa.cesaroni@unipd.it}
\\
{\em Serena Dipierro}:
Department of Mathematics
and Statistics,
University of Western Australia,
35 Stirling Hwy, Crawley WA 6009, Australia.
{\tt serena.dipierro@uwa.edu.au}\\
{\em Matteo Novaga}: Dipartimento di Matematica,
Universit\`a di Pisa, 
Largo Pontecorvo 5, 56127 Pisa,
Italy. {\tt matteo.novaga@unipi.it}\\
{\em Enrico Valdinoci}:
Department of Mathematics
and Statistics,
University of Western Australia,
35 Stirling Hwy, Crawley WA 6009, Australia,
Dipartimento di Matematica, Universit\`a di Milano,
Via Saldini 50, 20133 Milan, Italy. {\tt enrico@math.utexas.edu}}

\author[A. Cesaroni]{Annalisa Cesaroni}
\author[S. Dipierro]{Serena Dipierro}
\author[M. Novaga]{Matteo Novaga}
\author[E. Valdinoci]{Enrico Valdinoci}

\begin{abstract}
We consider an energy functional combining
the square of the local oscillation of a one--dimensional
function with a double well potential.
We establish the existence of minimal heteroclinic solutions
connecting the two wells of the potential.

This existence result cannot be accomplished by standard methods,
due to the lack of compactness properties.

In addition, we investigate the main properties of these
heteroclinic connections. We show that these minimizers
are monotone, and therefore they satisfy a suitable Euler-Lagrange equation.

We also prove that,
differently from the classical cases arising in ordinary differential
equations, in this context the heteroclinic connections
are not necessarily smooth, and not even continuous
(in fact, they can be piecewise constant). Also, we show that
heteroclinics are not necessarily unique up
to a translation, which is also in contrast with the classical setting.

Furthermore, we investigate the associated Dirichlet problem,
studying existence, uniqueness and partial regularity properties,
providing explicit solutions in terms of the external data and of the forcing
source, and exhibiting an example of discontinuous solution.
\end{abstract}

\maketitle

\tableofcontents

\section{Introduction}

One of the most classical problems in ordinary differential equations
consists in the study of second order equations coming from mechanical systems
having a Hamiltonian structure. For instance, one can consider the simple
one--dimensional case in which the Hamiltonian has the form
$$ H(p,q)=\frac{p^2}2-W(q)\,,\qquad p,q\in\R,$$
giving rise to the system of equations
$$ \begin{cases}
\dot q=\partial_p H=p,\\
\dot p=-\partial_q H=W'(q).\end{cases}$$
Noticing that~$\ddot q=\dot p$,
this system of ordinary differential equations reduces
to the single second order equation
\begin{equation}\label{AKSMSJsdfSJ}
\ddot q=W'(q).
\end{equation}
Equation~\eqref{AKSMSJsdfSJ} has a variational structure, coming from
the action functional
\begin{equation}\label{action} \int_\R \frac{\dot q^2(t)}{2}+W(q(t))\,dt,\end{equation}
namely minimizers (or, more generally, critical points) of this functional
satisfy~\eqref{AKSMSJsdfSJ}:
in jargon, one says that~\eqref{AKSMSJsdfSJ}
is the Euler-Lagrange equation associated to the functional in~\eqref{action}.

A typical and concrete example of this setting
is given by the equation of the pendulum:
in this case, setting the Lyapunov stable
equilibrium of the pendulum at~$q=0$
and the Lyapunov
unstable\footnote{We remark that the
Lyapunov stable equilibrium of the pendulum is variationally unstable, namely
the second derivative of the action functional is negatively defined. Viceversa,
the Lyapunov unstable equilibria of the pendulum are variationally stable, since
the second derivative of the action functional is positively defined. The terminology
related to Lyapunov stability is perhaps more common in the dynamical systems community,
while the one dealing with variational stability is often adopted in the calculus
of variations.}
ones at~$q=\pm1$, and considering unit gravity for simplicity,
one can take
\begin{equation}\label{WW1}
W(q)=\frac{1+\cos(\pi q)}{\pi}\end{equation}
in~\eqref{AKSMSJsdfSJ} and obtain the equation
\begin{equation}\label{AKSMSJsdfSJ2a} \ddot q=-\sin(\pi q).\end{equation}
An interesting analogue of~\eqref{AKSMSJsdfSJ} in partial differential
equations
arises in the study of phase coexistence models, and in particular
in the analysis of the Allen-Cahn equation
\begin{equation}\label{AKSMSJsdfSJ2}
\Delta u+u-u^3=0.
\end{equation}
If one considers the one-dimensional case,
with the choice
\begin{equation}\label{WW2}W(u)=\frac{(1-u^2)^2}{4},\end{equation}
then~\eqref{AKSMSJsdfSJ2} can be also reduced to~\eqref{AKSMSJsdfSJ}.
\medskip

A well established topic for the dynamical systems
as in~\eqref{AKSMSJsdfSJ2a} is the search for heteroclinic orbits,
that are orbits which connect two (Lyapunov unstable) equilibria.
These solutions have the special feature of separating the phase
space into regions in which solutions exhibit different topological behaviors
(e.g. oscillations versus librations), and, in higher dimensions,
they provide the essential building block to chaos.

The analogue of such heteroclinic connections for
the phase coexistence problems in~\eqref{AKSMSJsdfSJ2}
provides a transition layer connecting two (variationally stable)
pure phases of the system. In higher dimensions,
these solutions constitute the cornerstone to describe at a large scale
the phase separation, as well as the phase parameter in dependence
to the distance from the interface.\medskip

In this article we explore a brand new line of investigation
focused on a nonlocal analogue of~\eqref{AKSMSJsdfSJ},
in which the second derivative is replaced by a finite difference.
More concretely, we will consider a functional similar to that in~\eqref{action},
but in which the derivative is replaced by an oscillation term. We recall that other
nonlocal analogues 
of~\eqref{AKSMSJsdfSJ} have been considered in the literature, mainly replacing the 
second derivative with a fractional second derivative, see \cite{psv, a, cs, cmy, JAaasdfgN}.
Other lines of investigation took into account the case in which
the second derivative is replaced by a quadratic interaction with an integrable kernel,
see~\cite{MR1612250} and the references therein. \medskip

The interest in this problem combines perspectives
in pure and in applied mathematics. Indeed, from the theoretical
point of view, nonlocal functionals typically exhibit a number of
{\em novel features} that are worth exploring and provide {\em several conceptual
difficulties} that are completely new with respect to the classical cases.
On the other hand, in terms of applications,
nonlocal functionals can capture {\em original and interesting phenomena}
that cannot be described by the classical models.\medskip

In our framework, in particular, we take into account a nonlocal interaction
which is not scale invariant. This type of nonlocal structures is closely
related to several geometric motions that have been recently studied
both for their analytic interest in the calculus of variations
and for their concrete applicability in situations in which
detecting different scales allows the preservation of details and
irregularities in the process of removing white noises
(e.g. in the digitalization of fingerprints,
in which one wants to improve
the quality of the image without losing relevant features at small scales).
We refer in particular to~\cite{MR2728706, cdnv, cdnv2, cn, MR2655948, MR3187918, MR3023439, MR3401008, dnv}
for several recent contributions in the theoretical and applied analysis
of nonlocal problems without scale invariance.\medskip

While the previous literature mostly focuses on geometric
evolution equations, viscosity solutions, perimeter type problems
and questions arising in the calculus of variations,
in this paper we aim at investigating the existence and basic properties of heteroclinic minimizers
for nonlocal problems with lack of scale invariance.

Since this topic of research is completely new, we will need to introduce
the necessary methodology from scratch. In particular,
one cannot rely on standard methods, since:
\begin{itemize}
\item the problems taken into account do not possess standard
compactness properties,
\item the functional to minimize cannot be easily differentiated,
\item the solutions found need not to be (and in general are not) regular,
\item the solutions found need not to be (and in general are not) unique.
\end{itemize}
In the rest of the introduction, we give formal statements concerning the
mathematical setting in which we work and we present our main results,
regarding the existence of the heteroclinic connections,
their monotonicity properties, the Euler-Lagrange equation that they satisfy,
and their lack of regularity and uniqueness. Then, we take into account the Dirichlet
problem, obtaining explicit solutions and optimal oscillation bounds.

\subsection{Main assumptions} 
Given an interval~$I\subset\R$, we consider the oscillation of a function~$u\in L^\infty(I)$,
defined as
\begin{equation}\label{1.1bis}
\osc{I}u:=\sup_I u-\inf_I u.\end{equation}
Given~$r>0$, $a<b$, with~$b-a>2r$,
and~$W\in C(\R)$, we consider the energy functional
\begin{equation}\label{CALE}
{\mathcal{E}}_{(a,b)}(u):=\frac1{2r^2}\int_a^b \left(\osc{(x-r,x+r)} u\right)^2\,dx+\int_{a}^{b} W(u(x))\,dx.\end{equation}
As customary, we say that~$u\in L^\infty_{\rm loc}(\R)$ is a {\it local minimizer}
of~${\mathcal{E}}$
if, for any~$a<b$ and any~$v\in L^\infty_{\rm loc}(\R)$ such that~$u=v$
outside~$[a+r,b-r]$, we have that
$$ {\mathcal{E}}_{(a,b)}(u)\le{\mathcal{E}}_{(a,b)}(v).$$
Notice that, due to the nonlocal character of the oscillation functional, we require the competitor $v$ to coincide with the minimizer $u$ outside~$[a+r,b-r]$ instead of~$[a,b]$ (see \cite{cdnv2} for a discussion of this issue).

\smallskip

We shall assume the following structural conditions on the potential $W$: 
\begin{equation}\label{DOUBLEW} \begin{cases} 
W\in C(\R),\qquad  W(-1)=W(1)=0<W(t)\quad \mbox{for
all~$t\in\R\setminus\{-1,1\}$},\\ 
{\mbox{$W$ is strictly decreasing in~$(-\infty,-1)$ and
strictly increasing in~$(1,+\infty)$,}}
\\
{\mbox{$W$ is an even   function in $[-1,1]$ and has a
unique local maximum at $t=0$,}}\end{cases}
\end{equation}
and we denote 
\begin{equation}\label{cw}
c_W:=\int_{-1}^1 W(s)\,ds>0.
\end{equation}
%%	We remark that the third assumption on $W$ in \eqref{DOUBLEW}
%%	is equivalent to require that $W$ is nonincreasing on $[0,1]$
%%	and nondecreasing on $[-1,0]$.
The last assumption in~\eqref{DOUBLEW}
can be slightly generalized by simply assuming that~$W$ has a
unique local maximum in~$[-1,1]$, with 
some minor technical adaptations of our arguments. 
We point out that  condition \eqref{DOUBLEW} is satisfied
by the standard ``double well'' potentials, e.g. the ones in~\eqref{WW1}
and~\eqref{WW2}.\medskip

In the forthcoming Subsections \ref{MAINRE1}, \ref{MAINRE2} and~\ref{IAKNSHDKNDN}
we give precise statements of our main results
concerning the existence, possible uniqueness,
and geometric properties of
the minimizers of the functional in~\eqref{CALE},
specifically focused on heteroclinic connections, that, in our
setting, are critical points of the functional which connect the
two equilibria~$-1$ and~$1$. The Dirichlet problem associated to~\eqref{CALE}
(when restricted to monotone functions) will be described in detail in Subsection~\ref{THEDIRIC}.

\subsection{Existence of minimal heteroclinic connections}\label{MAINRE1} 

Now we discuss the construction of local minimizers
to~\eqref{CALE} which connect the two stable equilibria $-1$ and $1$. 
Our main result on this topic goes as follows:

\begin{theorem}[Existence of minimal heteroclinic connections] \label{exthm}
Assume that~\eqref{DOUBLEW} holds true. 
Then, there exists  a local minimizer~$u\in L^\infty(\R)$ to \eqref{CALE}  such that  $u$ is monotone nondecreasing and   satisfies 
\begin{equation}\label{LIMITS}
\lim_{x\to\pm\infty} u(x)=\pm1.
\end{equation}
Moreover,
\begin{equation}\label{minmonotono}\begin{split}
\mathcal{E}(u)\,&=\min \Big\{ \mathcal{E}(v) \;{\mbox{ s.t. }}\, v\in L^\infty(\R), \text{ monotone nondecreasing, s.t.  \eqref{LIMITS} holds}  \Big\}\\&\leq  \min\left\{\frac{4}{r}, 4+c_W\right\},\end{split}\end{equation}
where 
\[\mathcal{E}(v):=\frac1{2r^2}\int_\R \left(\osc{(x-r,x+r)} v\right)^2\,dx+
\int_{\R} W(v(x))\,dx.\]
\end{theorem}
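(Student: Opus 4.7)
The plan splits naturally into three parts: an \emph{upper bound} from explicit competitors, a \emph{compactness argument} producing a minimizer in the restricted monotone class, and a \emph{promotion} of that minimizer to a local minimizer of $\mathcal{E}$.

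First I would exhibit two test functions witnessing the bound $\min\{4/r,\,4+c_W\}$. The step function $v_0(x):=\mathrm{sgn}(x)$ has $W(v_0)\equiv 0$ and $\osc{(x-r,x+r)}v_0 = 2\cdot\mathbf{1}_{(-r,r)}(x)$, giving $\mathcal{E}(v_0)=4/r$. The clipped ramp $v_1(x):=\max(-1,\min(1,x))$ satisfies $\int_\R W(v_1)=c_W$, and a direct computation, splitting the integral of the squared oscillation according to how $(x-r,x+r)$ overlaps $[-1,1]$, shows that the oscillation contribution to $\mathcal{E}(v_1)$ is at most $4$ whenever $r\leq 1$; hence $\mathcal{E}(v_1)\leq 4+c_W$ in that regime. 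When $r\geq 1$, the competitor $v_0$ already gives $4/r\leq 4+c_W$, so in every regime one of the two competitors meets the bound.

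Next I would run the direct method on the class
\[
\mathcal{M}:=\Big\{v\in L^\infty(\R):\, v\text{ monotone nondecreasing},\ \lim_{x\to\pm\infty}v(x)=\pm 1\Big\},
\]
setting $I:=\inf_{\mathcal{M}}\mathcal{E}$, which is finite by the previous step. Pick a minimizing sequence $(u_n)\subset\mathcal{M}$, and, using translation invariance of $\mathcal{E}$, normalize it so that $\inf\{x:u_n(x)\geq 0\}=0$. Since each $u_n$ is monotone with values in $[-1,1]$, Helly's selection theorem yields a subsequence converging pointwise to a monotone nondecreasing $u:\R\to[-1,1]$. Passing to the liminf in the energy uses Fatou on each piece: $W(u_n)\to W(u)$ a.e.\ by continuity of $W$, and at every $x$ such that $x-r$ and $x+r$ are continuity points of $u$ (a full-measure set) the monotonicity of $u_n$ gives $\osc{(x-r,x+r)} u_n\to \osc{(x-r,x+r)} u$. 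Hence $\mathcal{E}(u)\leq I<\infty$. Because $u$ is monotone and bounded, the limits $L_\pm:=\lim_{x\to\pm\infty}u(x)$ exist in $[-1,1]$; finiteness of $\int_\R W(u)$ together with $W>0$ off $\{\pm 1\}$ forces $L_\pm\in\{-1,1\}$, and the normalization fixes $L_-=-1$, $L_+=+1$. Thus $u\in\mathcal{M}$, $\mathcal{E}(u)=I$, and the variational identity in \eqref{minmonotono} is established.

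The final step is to promote $u$ from a minimizer within $\mathcal{M}$ to a local minimizer of $\mathcal{E}$ in the sense defined before the theorem. Given a competitor $v$ coinciding with $u$ outside $[a+r,b-r]$, I would construct a monotone $\tilde v$ still coinciding with $u$ outside $[a+r,b-r]$ and with $\mathcal{E}_{(a,b)}(\tilde v)\leq \mathcal{E}_{(a,b)}(v)$; then $\tilde v\in\mathcal{M}$, so $\mathcal{E}(u)\leq \mathcal{E}(\tilde v)$, and subtracting the common contributions outside $(a,b)$ yields local minimality. I expect this promotion to be the main obstacle: the squared-oscillation functional is nonlocal and quadratic in the oscillation, so a naive monotone rearrangement does not obviously decrease it. A careful pointwise comparison of $\osc{(x-r,x+r)}\tilde v$ with $\osc{(x-r,x+r)}v$, together with delicate handling near the transition points $x=a+r$ and $x=b-r$ to preserve the prescribed external data, is required to close the argument. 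By contrast, the compactness and lower-semicontinuity parts rest on standard properties of monotone sequences once one is careful with continuity points.
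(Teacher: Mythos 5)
Your upper bound and compactness steps are sound: the test-function computations give $\min\{4/r,\,4+c_W\}$ as in the paper, and the Helly-plus-Fatou argument on the monotone class, together with the translation normalization and the observation that finite potential energy forces $L_\pm\in\{-1,1\}$, correctly produces a minimizer of $\mathcal{E}$ over $\mathcal{M}$ with the limits~\eqref{LIMITS}. This part is slightly more direct than the paper's route through bounded Dirichlet problems, though the substance is the same.

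The gap is in the promotion to local minimality, and you correctly flag it as the main obstacle but do not close it --- and the specific rearrangement you propose does not in fact follow from the tools in the paper. You ask for a monotone $\tilde v$ that \emph{coincides with $u$ outside $[a+r,b-r]$} while decreasing $\mathcal{E}_{(a,b)}$. Since $u$ is a non-constant monotone function, global monotonicity of $\tilde v$ forces its values on $[a+r,b-r]$ to lie in $\big[u\big((a+r)^-\big),\,u\big((b-r)^+\big)\big]$, which in general is a proper subinterval of $(-1,1)$. Clipping $v$ into this range is problematic: it shrinks oscillations, but it can move values of $v$ toward $0$ and thus \emph{increase} $\int W(v)$, since $W$ attains its minimum only at $\pm1$. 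The paper's rearrangement machinery (Lemma~2.1) sidesteps this precisely because it is formulated for competitors that equal $\pm1$ on the exterior: the iterative $\min/\max$ modifications are allowed to act on the entire interior interval $[-R+r,R-r]$, and the monotonicity-of-$W$ assumptions in~\eqref{DOUBLEW} then guarantee that each step lowers the potential term. Those steps are not compatible with the requirement that $\tilde v$ agree with a non-constant $u$ near the transition points $a+r$ and $b-r$.

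The paper avoids this issue entirely by a different mechanism that your direct-on-the-line construction discards: it builds the whole family $u_M$ of minimizers of the bounded Dirichlet problems, observes that $e_M=\mathcal{E}_{(-M,M)}(u_M)$ is nonincreasing with limit $e=\mathcal{E}(u)$, and derives local minimality by contradiction. Given a local improvement $v$ of $u$, it truncates $v$ to $v^M$ (which is $\pm1$ outside $[-M+r,M-r]$ and hence \emph{is} admissible for Lemma~2.1), rearranges $v^M$ to a monotone $\tilde v^M\in\mathcal{M}_M$, and contradicts the minimality of $u_M$ rather than comparing directly with $u$. To repair your argument you would either need to prove the stronger rearrangement lemma you describe --- which I believe is false as stated, for the reason above --- or reinstate the bounded Dirichlet approximations and the $u_M$ family as the vehicle for the contradiction. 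As written, the final step is a sketch of a strategy with an acknowledged but unresolved obstruction, not a proof.
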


We stress that the existence
of heteroclinic connections in nonlocal problems
is usually a rather difficult task in itself, which cannot
be achieved by standard ordinary differential equations
methods and cannot rely directly on conservation of energy formulas.
For problems modeled on fractional equations a careful
investigation of heteroclinic solutions and of their basic properties
has been recently performed in~\cite{A2, psv, a, cs, MR3460227, MR3594365, JAaasdfgN, cmy}.

The case that we treat in this paper is very different from
the existing literature, due to the lack of scale invariance.
In particular, the proof of the
existence result in Theorem~\ref{exthm} is more involved than the standard 
argument  based on direct methods,  due to the lack of appropriate compactness results for the oscillation functional. 
In order to gain compactness in our case,
we will need to prove that it is
possible to restrict the space of competitors for the Dirichlet problem
to monotone functions, and then we utilize suitable approximation
arguments in compact intervals.  

The technical arguments utilized in the proofs are
specifically tailored to our case, since we do not have any a priori information
on the regularity of the competitors involved in the minimization,
and solutions may be discontinuous. Therefore all the methods
based on pointwise analysis and geometric considerations
are not available at once in our setting, and they need to be replaced
by ad-hoc arguments.

See also~\cite{cdnv} for further discussions
about compactness issues for oscillatory functionals, and~\cite{cdnv2}
for existence and rigidity results for minimizers when~$W\equiv 0$. 
The case~$W\not\equiv 0$ that we consider in this paper cannot
be reduced to the existing literature on the subject, since it is the presence
of a nontrivial potential that defines the notion of equilibria
and makes the construction of 
heteroclinic orbits meaningful.

\subsection{Geometric properties   of minimal heteroclinic connections} \label{MAINRE2}

Now we describe the main characteristics of the
minimal heteroclinic connections given by
Theorem~\ref{exthm}.  In particular we will show that:
\begin{itemize}
\item they are monotone, \item they
satisfy an appropriate ``finite difference'' Euler-Lagrange equation, \item
and they are not necessarily continuous, since  there exists at least  one minimal heteroclinic connection which is piecewise constant on intervals of length $2r$. \end{itemize}
In our setting, the simplest competitor for heteroclinic connections is the
piecewise constant function defined (up to translations) as 
\begin{equation}\label{constant} 
u_0(x):=\begin{cases} 1&{\mbox{ if }} x>0,\\ -1 & {\mbox{ if }}x<0.\end{cases}
\end{equation}
It is easy to check that
\begin{equation}\label{constant2}
\mathcal{E}(u_0)=\frac{4}{r}.\end{equation}
A natural question is whether this is also a minimal heteroclinic connection. 
This would be the case if one considers
discontinuous double well potentials of the form~$W(t):=\chi_{(-1,1)}(t)$.
On the other hand, we can rule out the possibility that~$u_0$
is a minimizer when either~$r$ is sufficiently small or~$W$ is
sufficiently regular, as stated precisely in the next result: 

\begin{proposition}\label{propminimi} Assume that~\eqref{DOUBLEW} holds true. 
Then, the function $u_0$
is not a minimal heteroclinic connection if
\begin{itemize}
 \item either~$r\in\left(0,\frac{4}{4+c_W}\right)$,
being~$c_W$ defined in \eqref{cw},
\item or~$W$ is differentiable at $\pm 1$. 
\end{itemize}
\end{proposition}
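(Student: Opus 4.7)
The strategy in both parts is to exhibit an explicit admissible competitor $v$ that coincides with $u_0$ outside a bounded interval and satisfies $\mathcal{E}_{(a,b)}(v)<\mathcal{E}_{(a,b)}(u_0)=\frac{4}{r}$ for a suitable window $(a,b)$, thereby violating the hypothesis that $u_0$ is a (local) minimal heteroclinic connection.

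For the first assertion I would take the truncated identity
\[
u_{\mathrm{lin}}(x):=\max\{-1,\min\{x,1\}\},
\]
which equals $u_0$ outside $[-1,1]$; the admissibility window can be chosen as $a=-1-r$, $b=1+r$ (legitimate, since $r<4/(4+c_W)<1$, so the two boundary layers do not overlap). A direct computation gives $\osc{(x-r,x+r)}u_{\mathrm{lin}}=2r$ on the bulk $|x|<1-r$ and a linear ramp of height $2r$ across each boundary layer $\{\,||x|-1|<r\,\}$; squaring and integrating yields an oscillation energy equal to $4-\frac{4r}{3}$, while the potential part reduces to $\int_{-1}^{1}W(x)\,dx=c_W$. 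Hence $\mathcal{E}_{(a,b)}(u_{\mathrm{lin}})=4-\frac{4r}{3}+c_W$, and the desired inequality $4-\frac{4r}{3}+c_W<\frac{4}{r}$ rearranges to $r(4+c_W)<4+\frac{4r^2}{3}$, which is immediate from the hypothesis $r(4+c_W)<4$.

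For the second assertion I first observe that differentiability of $W$ at $\pm 1$ (which are minima of the nonnegative function $W$) forces $W'(\pm 1)=0$, hence $W(1-\epsilon)=o(\epsilon)$ as $\epsilon\to 0^+$. I then test $u_0$ against the three-level step
\[
u_\epsilon(x):=\begin{cases} -1 & \text{if } x\le 0,\\ 1-\epsilon & \text{if } 0<x\le\delta,\\ 1 & \text{if } x>\delta,\end{cases}
\]
with $\delta\in(0,r)$ fixed and $\epsilon>0$ to be chosen small, so that $u_\epsilon=u_0$ outside $(0,\delta)$ and the window $a=-r$, $b=r+\delta$ is admissible. Since the only discontinuities of $u_\epsilon$ are at $0$ and $\delta$, the oscillation $\osc{(x-r,x+r)}u_\epsilon$ depends only on which of these two breakpoints are captured by the sampling window, giving three regimes: oscillation $2-\epsilon$ on $(-r,\delta-r)$, oscillation $2$ on $(\delta-r,r)$, and oscillation $\epsilon$ on $(r,\delta+r)$. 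Using the identity $(2-\epsilon)^2+\epsilon^2=4-2\epsilon(2-\epsilon)$ and adding the potential contribution yields
\[
\mathcal{E}_{(a,b)}(u_\epsilon)=\frac{4}{r}-\frac{\delta\,\epsilon(2-\epsilon)}{r^2}+\delta\,W(1-\epsilon),
\]
so the inequality $\mathcal{E}_{(a,b)}(u_\epsilon)<\frac{4}{r}$ reduces to $W(1-\epsilon)<\epsilon(2-\epsilon)/r^2$, which holds for all sufficiently small $\epsilon>0$ because the left-hand side is $o(\epsilon)$ while the right-hand side is $\Theta(\epsilon)$.

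The only technical subtlety is the oscillation bookkeeping for $u_\epsilon$. The essential point is that replacing a single jump of height $2$ by two jumps of heights $2-\epsilon$ and $\epsilon$ produces a strict first-order saving in the squared-oscillation integral (of size $\delta\,\epsilon(2-\epsilon)/r^2$), whereas the potential penalty on the intermediate plateau of height $1-\epsilon$ is only $o(\epsilon)$ precisely when $W$ is smooth at the wells. This linear-saving versus sublinear-cost balance is the genuinely nonlocal phenomenon that drives $u_0$ strictly above the minimal energy. The first case, by contrast, is a clean explicit comparison with the canonical linear competitor.
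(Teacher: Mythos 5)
Your proposal is correct and follows essentially the same route as the paper: for the first case the paper invokes the bound~\eqref{minmonotono}, which is itself proved via the linear‐ramp competitor~$\tilde v$ in Proposition~\ref{exdir}, i.e.\ the same function as your~$u_{\mathrm{lin}}$ up to translation (and with the same energy~$4-\tfrac{4r}{3}+c_W$); for the second case the paper uses exactly your three-level step, with the specific choice $\delta=2r$ (so the middle regime of oscillation~$2$ degenerates to a point and the computation collapses to two pieces), obtaining the same $\Theta(\eps)$-saving-versus-$o(\eps)$-cost balance.
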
 
 
We also show that all the heteroclinic connections are necessarily monotone:
  
\begin{theorem}[Monotonicity of the heteroclinics]
\label{MONOTH} Assume that~\eqref{DOUBLEW} holds true.
Let $u\in L^\infty_{{\rm{loc}}}(\R)$ be a local minimizer to~\eqref{CALE}  
which satisfies \eqref{LIMITS}. 
Then,  $u$ is monotone nondecreasing.
\end{theorem}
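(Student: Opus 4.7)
The plan is to prove monotonicity by a sliding/comparison argument built on a submodularity property of the oscillation-type functional. First I would establish the elementary inequality that, for any bounded $f,g$ on an interval $I$,
\[
\bigl(\osc{I}\max(f,g)\bigr)^2 + \bigl(\osc{I}\min(f,g)\bigr)^2 \le \bigl(\osc{I} f\bigr)^2 + \bigl(\osc{I} g\bigr)^2,
\]
which follows from the bounds $\sup_I\max(f,g)=\max(\sup_I f,\sup_I g)$, $\inf_I\min(f,g)=\min(\inf_I f,\inf_I g)$, $\inf_I\max(f,g)\ge\max(\inf_I f,\inf_I g)$, $\sup_I\min(f,g)\le\min(\sup_I f,\sup_I g)$, together with the algebraic identity $(\alpha-\delta)^2+(\gamma-\beta)^2-(\alpha-\beta)^2-(\gamma-\delta)^2 = 2(\alpha-\gamma)(\beta-\delta)$. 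The inequality is strict on any window where $f,g$ have strictly interlocking supremum and infimum. Since $W(\max(f,g))+W(\min(f,g))=W(f)+W(g)$ pointwise, integrating on windows $I=(x-r,x+r)$ yields, for every $a<b$,
\[
\mathcal{E}_{(a,b)}(\max(f,g))+\mathcal{E}_{(a,b)}(\min(f,g)) \le \mathcal{E}_{(a,b)}(f)+\mathcal{E}_{(a,b)}(g).
\]

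Next I would fix $t>0$ and set $u_t(x):=u(x+t)$, which is itself a local minimizer satisfying the same limits at $\pm\infty$. Writing $\phi:=\min(u,u_t)$ and $\psi:=\max(u,u_t)$, and using \eqref{LIMITS} to choose, for any $\eps>0$, a threshold $L_0$ with $|u-u_t|<\eps$ essentially on $\{|x|>L_0\}$, I would construct truncated competitors $\tilde\phi,\tilde\psi$ on a large interval $(-L,L)$, $L\gg L_0$, equal to $\phi,\psi$ on $[-L+r,L-r]$ and to $u,u_t$ respectively outside. By construction, $\tilde\phi$ and $\tilde\psi$ are admissible competitors for the local minimality of $u$ and $u_t$ on $(-L,L)$; summing the two local minimality inequalities and using a boundary-layer estimate (exploiting that all four functions coincide up to $O(\eps)$ on $\{L_0<|x|<L\}$, so that the modification costs $O(\eps^2)$ in oscillation) would give
\[
\mathcal{E}_{(-L,L)}(u)+\mathcal{E}_{(-L,L)}(u_t) \le \mathcal{E}_{(-L,L)}(\phi)+\mathcal{E}_{(-L,L)}(\psi)+O(\eps^2).
\]
Combined with the integrated submodularity, this forces the pointwise submodularity to saturate for a.e.\ window as $L\to\infty$, $\eps\to 0$. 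A careful analysis of the equality cases, together with a Lebesgue-density argument ruling out that $\{u>u_t\}$ and $\{u<u_t\}$ simultaneously admit density points within distance $2r$, then yields the dichotomy: for every $t>0$, either $u\le u_t$ or $u\ge u_t$ almost everywhere.

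Monotonicity then follows in a few lines: if $u\ge u_t$ a.e.\ for some $t>0$, iteration gives $u(x)\ge u(x+nt)\to 1$ while the backwards iteration gives $u(x)\le u(x-nt)\to -1$, a contradiction; hence $u\le u_t$ a.e.\ for every $t>0$, and so $u$ (after adjustment on a null set) is monotone nondecreasing. The principal obstacle is the second step, namely extracting pointwise comparability of $u$ and $u_t$ from the \emph{integrated} saturation of submodularity. Since the minimizer may be discontinuous, the strictness analysis of $(\osc{I}\max)^2+(\osc{I}\min)^2\le (\osc{I} f)^2+(\osc{I} g)^2$ must be pushed through a careful Lebesgue-density argument for the sign of $u-u_t$, which is the genuinely new ingredient compared with classical sliding or rearrangement arguments for smooth problems.
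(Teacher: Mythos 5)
The submodularity step in your plan is sound: the identity
\[
(\alpha-\beta)^2+(\gamma-\delta)^2-(\alpha-\delta)^2-(\gamma-\beta)^2=2(\alpha-\gamma)(\delta-\beta)
\]
does give, after handling the four sign configurations, the inequality
\[
\Bigl(\osc{I}\max(f,g)\Bigr)^2+\Bigl(\osc{I}\min(f,g)\Bigr)^2\le\Bigl(\osc{I}f\Bigr)^2+\Bigl(\osc{I}g\Bigr)^2,
\]
and the sum-and-compare of the two local minimality inequalities for $u$ and $u_t$ (with boundary-layer corrections made small by~\eqref{LIMITS}) indeed forces the integrated defect to vanish. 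The problem is the next step, which you flag as ``the principal obstacle'' but then resolve too quickly. Saturation of the submodularity inequality on the window $(x-r,x+r)$ only constrains the configuration of $\sup$'s and $\inf$'s of $u$ and $u_t$ over that window: it rules out a \emph{strictly interlocking} pattern (e.g.\ $\sup_I u>\sup_I u_t$ and $\inf_I u<\inf_I u_t$), but it is \emph{compatible} with $\{u>u_t\}$ and $\{u<u_t\}$ both being nonempty, provided they stay more than $2r$ apart. In that situation every window sees at most one of the two sets, $\max(u,u_t)$ and $\min(u,u_t)$ are locally just $u$ and $u_t$ in one order or the other, and the submodularity holds with equality --- no information is gained. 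So ``ruling out that $\{u>u_t\}$ and $\{u<u_t\}$ simultaneously admit density points within distance $2r$'' does \emph{not} yield the dichotomy ``$u\le u_t$ a.e.\ or $u\ge u_t$ a.e.''; it only tells you the two sets are $2r$-separated. To bridge that gap one would need a unique-continuation or connectedness input, and the natural source of unique continuation --- the Euler--Lagrange recursion~\eqref{el1} --- is proved in the paper only \emph{after} monotonicity (Theorem~\ref{euler} relies on Theorem~\ref{MONOTH}), so you would be trapped in a circle.

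The paper takes a different and, for this obstacle, more robust route. Instead of sliding $u$ against a translate, it compares $u$ against truncations at \emph{constants}: if $u$ is not monotone there are $a<b$ with $\liminf_{x\to b}u<\limsup_{x\to a}u$, and one builds a competitor of the form $\max\{u,A\}$ past $a$ or $\min\{u,B\}$ before $b$, choosing the level and the side according to a chain of cases ($A_0,B_0,A_1,B_1,\dots$). By~\eqref{taglio} these truncations never increase the oscillation term, and the shape hypothesis~\eqref{DOUBLEW} forces a \emph{strict} decrease of the potential term on a set of positive measure, contradicting minimality directly --- there is no need for a global dichotomy between $u$ and a translate, which is precisely the step your plan cannot currently close. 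If you want to keep the sliding framework, you would either have to prove the $2r$-separated configuration is impossible by some independent argument, or fall back on a constant-level truncation to kill the residual case; but then you would essentially be reproducing the paper's Steps~3--5. I'd recommend stating the submodularity lemma (it is a nice observation) but restructuring the main argument around the constant-truncation comparison.
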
 

An interesting consequence of Theorem~\ref{MONOTH}
is that every minimal  heteroclinic connection satisfies an appropriate 
finite difference equation, which can be seen as an Euler-Lagrange equation
associated to the energy functional in~\eqref{CALE}. The precise result
that we have goes as follows:

\begin{theorem}[Euler-Lagrange equation]\label{euler}  Assume that~\eqref{DOUBLEW} holds true and
that $W$ restricted to $[-1,1]$ is a $C^1$ function. 
Let  $u\in L^\infty_{\rm loc}(\R)$ be a  a local minimizer for the functional
in~\eqref{CALE} which satisfies \eqref{LIMITS}. 

Then $u$ satisfies the Euler-Lagrange equation 
\begin{equation}\label{el1}
\frac{u(x+2r)+u(x-2r)-2u(x)}{r^2}=  W'(u(x))\qquad \mbox{ for a.e. $x\in\R$.}\end{equation}
\end{theorem}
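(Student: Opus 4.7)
The plan is a first-variation computation, leveraging the monotonicity from Theorem~\ref{MONOTH}. Since $u$ is monotone nondecreasing, $\osc{(x-r,x+r)} u = u(x+r)-u(x-r)$ for a.e.\ $x\in\R$, and hence
\[
\mathcal{E}(u) = \frac{1}{2r^2}\int_\R (u(x+r)-u(x-r))^2\,dx + \int_\R W(u(x))\,dx.
\]
Given $\phi\in C_c^\infty(\R)$, I would consider the competitor $v_t := u + t\phi$ for $|t|$ small and exploit $\mathcal{E}(v_t)\geq\mathcal{E}(u)$. By $C^1$ regularity of $W$ on $[-1,1]$ and dominated convergence, the potential part contributes $t\int_\R W'(u)\phi\,dx + o(t)$ to $\mathcal{E}(v_t)-\mathcal{E}(u)$.

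The delicate point is the oscillation term, since $v_t$ need not be monotone and the oscillation functional is not Gateaux-differentiable at $u$ when $u$ has flat pieces of positive length. To overcome this, I would first restrict to the class $\mathcal{T}_u\subset C_c^\infty(\R)$ of test functions that are constant on every maximal flat region of $u$. For $\phi\in\mathcal{T}_u$, Danskin's theorem, applied pointwise in $x$, shows that both one-sided $t$-derivatives of $\sup_{(x-r,x+r)}v_t$ (resp.\ $\inf_{(x-r,x+r)}v_t$) at $t=0$ coincide and equal $\phi(x+r)$ (resp.\ $\phi(x-r)$), since the argmax (resp.\ argmin) set of $u$ on $(x-r,x+r)$ lies inside a maximal flat piece of $u$ on which $\phi$ is constant. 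Hence $\mathcal{E}(v_t)$ is genuinely differentiable at $t=0$, and local minimality forces
\[
\frac{1}{r^2}\int_\R (u(x+r)-u(x-r))(\phi(x+r)-\phi(x-r))\,dx + \int_\R W'(u)\phi\,dx = 0 \qquad\forall\,\phi\in\mathcal{T}_u.
\]
A change of variables $y = x\pm r$ recasts the oscillation integral as $-\int_\R(u(y+2r)+u(y-2r)-2u(y))\phi(y)\,dy$. Since every $\phi\in C_c^\infty(\R)$ supported outside the flat regions of $u$ automatically belongs to $\mathcal{T}_u$, the fundamental lemma of the calculus of variations yields \eqref{el1} almost everywhere outside those flat regions.

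To complete the argument, I would rule out flat regions of positive length for $u$. On a flat region $I=\{u\equiv c\}$, the identity \eqref{el1} (already proved outside $I$), combined with the monotonicity of $u$ and the continuity of $u(\cdot\pm 2r)$ at the endpoints of $I$, forces $u(\cdot+2r)+u(\cdot-2r)$ to be constant on $I$; by monotonicity of each summand, $u$ is then also constant on both $I+2r$ and $I-2r$. Iterating along the arithmetic chain $I+2r\Z$ propagates flat pieces of the same length arbitrarily far, which by a case analysis on the sign of $W'(c)$ (using the strict monotonicity of $W$ away from $\pm 1$ and the fact that a flat piece at $\pm 1$ at a finite point would persist up to the opposite asymptote) is incompatible with the range $u(\R)\subset[-1,1]$ and the limits \eqref{LIMITS}. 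Hence $u$ has no flat region of positive length, the exceptional set has zero measure, and \eqref{el1} holds a.e.\ on $\R$.

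The main obstacle is the technical handling of the oscillation's non-smoothness: identifying the monotonicity-compatible class $\mathcal{T}_u$ on which the first variation is rigorous (Danskin's theorem provides only one-sided derivatives for a general $\phi$), and then running the propagation argument above to extend the Euler--Lagrange equation from the complement of flat regions to the entire real line.
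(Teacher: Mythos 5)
Your proposal has a fatal gap in its final step: the claim that a local minimizer $u$ has no flat region of positive length is false, and it in fact contradicts Theorem~\ref{costante} of the paper, which exhibits a minimal heteroclinic connection that is piecewise constant on intervals of length $2r$ (so that \emph{every} point of $\R$ lies in a flat region). For such a minimizer, your argument yields~\eqref{el1} ``a.e.\ outside the flat regions,'' which is vacuous. The propagation argument cannot salvage this: for instance, the odd piecewise-constant sequence $(\bar w_n)$ of Proposition~\ref{corodis} gives a flat region at the level $c=0$, where $W'(0)=0$ by the evenness of $W$ on $[-1,1]$ in~\eqref{DOUBLEW}, so the case analysis on the sign of $W'(c)$ produces no contradiction. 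Even the propagation step itself is unjustified: $u$ may be discontinuous, so you cannot invoke continuity of $u(\cdot\pm 2r)$ at the endpoints of $I$, and~\eqref{el1} has not yet been established \emph{on} $I$ to deduce that $u(\cdot+2r)+u(\cdot-2r)$ is constant there.

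The paper avoids the non-differentiability of the oscillation by a different mechanism. It introduces the auxiliary functional
\[
{\mathcal{F}}_{(a,b)}(v)=\frac1{2r^2}\int_a^b\big(v(x+r)-v(x-r)\big)^2\,dx+\int_a^b W(v(x))\,dx,
\]
which satisfies ${\mathcal{F}}\le{\mathcal{E}}$ for all $v$ and ${\mathcal{F}}={\mathcal{E}}$ on monotone functions, and which is genuinely smooth in $v$. The heart of the proof (Proposition~\ref{teoincr}) is showing that the monotone $\mathcal{E}$-minimizer $u$ is also an $\mathcal{F}$-local minimizer; this does \emph{not} follow from the pointwise inequality ${\mathcal{F}}\le{\mathcal{E}}$ alone, and requires a contradiction argument based on the piecewise-constant monotone solutions to the $\mathcal{F}$-Dirichlet problem constructed in Lemma~\ref{competit2} (which simultaneously solve the $\mathcal{E}$-Dirichlet problem). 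Once $u$ minimizes $\mathcal{F}$, the Euler--Lagrange equation~\eqref{el1} follows from the ordinary first variation of the quadratic functional $\mathcal{F}$, regardless of whether $u$ has flat pieces. If you want to salvage a direct first-variation argument, you should not try to exclude flat regions; you should instead compare $u$ against competitors for which the oscillation term becomes the quadratic increment term, which is precisely what the detour through $\mathcal{F}$ accomplishes.
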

 
We stress that it is not evident to obtain pointwise equations as in~\eqref{el1}
directly from the minimization of oscillation functionals as
in~\eqref{CALE} since, roughly speaking, it is not easy to carry the
derivatives inside the oscillation terms
(for instance, while in the classical case
one can obtain equation~\eqref{AKSMSJsdfSJ}
by simply taking derivatives of the functional in~\eqref{action},
this approach does not lead to equation~\eqref{el1}
by direct differentiation of the functional in~\eqref{CALE}). 

On the other hand,
it is always desirable to find necessary conditions
for minimization, and, in our case, the identity in~\eqref{el1}
plays an important role since it allows us to reconstruct certain values
of the minimizers by a partial knowledge of the values nearby.
In this sense, the operator on the left hand side of~\eqref{el1}
is a discretization of the second derivative, and~\eqref{el1}
can be seen as a discrete version of the classical pendulum and Allen-Cahn
equations
(compare with~\eqref{AKSMSJsdfSJ}, \eqref{AKSMSJsdfSJ2a}
and~\eqref{AKSMSJsdfSJ2}).\medskip

It is also interesting to
observe that, as~$r\searrow0$, the heteroclinic orbits found in Theorem~\ref{exthm}
recover the classical heteroclinics. This result makes use
of the Euler-Lagrange equation given by Theorem~\ref{euler},
and its statement goes as follows:

\begin{proposition}[Limit behavior as~$r\searrow0$]\label{ASY}
Assume that~\eqref{DOUBLEW} holds true and
that $W$ restricted to $[-1,1]$ is a $C^1$ function. 
For every~$r>0$, let~$u_r\in L^\infty(\R)$ be
a local minimizer of~\eqref{CALE}
which is monotone nondecreasing and satisfies~\eqref{LIMITS},
as given in Theorem~\ref{exthm}.

Then, up to a translation,
%and up to a subsequence
we have that~$u_r$ converges a.e.
to the  classical heteroclinic~$u$, namely  the unique solution to \begin{eqnarray}
\label{EQ:44} && 4u''(x)=W'(u(x))\qquad{\mbox{for all }}x\in\R,\\
\label{EQ:45}
&& u(0)=0,\\
\label{EQ:46} {\mbox{and }}&&\lim_{x\to\pm\infty}u(x)=\pm1.
\end{eqnarray}
\end{proposition}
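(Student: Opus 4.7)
The plan is to combine compactness for monotone functions with a passage to the limit in the Euler-Lagrange equation provided by Theorem~\ref{euler}. Since each $u_r$ is monotone nondecreasing with limits $\pm 1$, after truncating at $\pm 1$ (which does not increase the energy, by \eqref{DOUBLEW}) I may assume $-1 \le u_r \le 1$ and translate so that $x_r := \inf\{x : u_r(x) \ge 0\}$ equals $0$; this point is finite by the uniform energy bound $\mathcal{E}(u_r) \le 4 + c_W$ coming from \eqref{minmonotono}. Helly's selection theorem then furnishes, along some subsequence $r_n \downarrow 0$, a monotone nondecreasing a.e.\ pointwise limit $u_\infty : \R \to [-1,1]$. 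The uniform bound $\int_\R W(u_{r_n}) \le 4 + c_W$, combined with $\inf_{|t|\le 1 - \delta} W(t) > 0$ from \eqref{DOUBLEW}, forces $|\{|u_{r_n}| \le 1-\delta\}|$ to be bounded uniformly in $n$; monotonicity and the normalization $x_{r_n}=0$ confine this set to a fixed interval $[-L_\delta, L_\delta]$, which transfers to the limit and yields $u_\infty(\pm\infty) = \pm 1$.

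For the passage to the limit in \eqref{el1}, I would test against $\varphi \in C_c^\infty(\R)$ and change variables on the finite-difference side to get
\begin{equation*}
\int_\R u_{r_n}(y)\, \frac{\varphi(y+2r_n) + \varphi(y-2r_n) - 2\varphi(y)}{r_n^2}\, dy \;=\; \int_\R W'(u_{r_n}(x))\, \varphi(x)\, dx.
\end{equation*}
A Taylor expansion shows the discrete-Laplacian kernel applied to $\varphi$ converges uniformly to $4\varphi''$; together with $|u_{r_n}| \le 1$, the a.e.\ convergence of $u_{r_n}$, and continuity of $W'$ on $[-1,1]$, dominated convergence produces
\begin{equation*}
4 \int_\R u_\infty\, \varphi''\, dy \;=\; \int_\R W'(u_\infty)\, \varphi\, dx \qquad \text{for every } \varphi \in C_c^\infty(\R).
\end{equation*}
Hence $u_\infty$ solves $4 u_\infty'' = W'(u_\infty)$ distributionally; since $W'(u_\infty) \in L^\infty$, standard bootstrapping yields $u_\infty \in C^2(\R)$ and the ODE holds pointwise.

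Finally, continuity of $u_\infty$ together with the normalization $x_{r_n}=0$ (which gives $u_{r_n}(x)<0$ for $x<0$ and $u_{r_n}(x)\ge 0$ for $x>0$ by monotonicity) forces $u_\infty(0) = 0$; combined with $u_\infty(\pm\infty) = \pm 1$ and the ODE, uniqueness of the classical heteroclinic solving \eqref{EQ:44}--\eqref{EQ:46} (standard phase-plane analysis) identifies $u_\infty$ with $u$. Since every subsequence extracted from $\{u_r\}$ produces the same limit, the entire family $u_r$ converges a.e.\ to $u$, up to the translation used in the normalization. The principal obstacle is the rigorous passage to the limit in the discrete Euler-Lagrange equation, which is handled cleanly in distributional form once the uniform $L^\infty$ bound is in place; a subsidiary delicacy is keeping the transition region localized at the origin so that the limit is nontrivial (rather than identically $\pm 1$), a point that relies on the uniform $L^1$ bound on $W(u_r)$ provided by \eqref{minmonotono}.
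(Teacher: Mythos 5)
Your proof is correct and follows the same overall strategy as the paper: translate so the sign change of $u_r$ sits at the origin, extract an a.e.\ limit by Helly/BV compactness of bounded monotone functions, pass to the limit in the Euler--Lagrange equation~\eqref{el1} in weak form by moving the finite-difference operator onto the test function and Taylor-expanding, bootstrap $u_\infty$ to a $C^2$ solution of $4u''=W'(u)$, deduce $u_\infty(0)=0$ from the normalization, and conclude by uniqueness of the classical heteroclinic with a subsequence argument. The one genuine deviation is in how you obtain $u_\infty(\pm\infty)=\pm1$: you use the uniform potential-energy bound $\int_\R W(u_{r_n})\le 4+c_W$ from~\eqref{minmonotono} together with monotonicity and the normalization to trap the transition set $\{|u_{r_n}|\le 1-\delta\}$ inside a fixed compact interval $[-L_\delta,L_\delta]$ independent of $n$, and then let this propagate to the limit. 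The paper instead invokes a Fatou-type estimate on the full energy density, formally identifying $\liminf_{r\to0}\frac{(u_r(x+r)-u_r(x-r))^2}{2r^2}$ with $2|u'(x)|^2$, and then argues by contradiction from $\int_\R W(u)<\infty$. Your version is arguably the cleaner of the two for this step, since it bypasses the delicate identification of the $\liminf$ of the discrete oscillation term with the classical kinetic energy and uses only the potential part of the energy bound and elementary monotonicity.

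One minor quibble: the finiteness of $x_r=\inf\{x: u_r(x)\ge 0\}$ follows immediately from monotonicity and the limits~\eqref{LIMITS} (so that $u_r$ is negative near $-\infty$ and nonnegative near $+\infty$); the energy bound is not needed at that point, though it is essential later for the confinement estimate.
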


In the next result, we show that minimal heteroclinic connections are not necessarily
regular, and this is a fundamental difference with respect to the classical
case of ordinary differential equations. To this end, we establish
the existence of a piecewise constant (and, in particular,
discontinuous) minimal heteroclinic connection. 

\begin{theorem}[Lack of regularity
and discontinuity of heteroclinc connections]  \label{costante}
Assume that~\eqref{DOUBLEW} holds true and moreover that $W$ restricted to $[-1,1]$ is
a $C^1$ function. 
Then,
there exists at least one  minimal heteroclinic connection $u$  
which is piecewise constant, on intervals of length $2r$. 

In particular,
there exists a sequence $(u_n)_{n\in \Z}$   such that 
\[u(x)\equiv u_n \qquad {\mbox{ for all }}x\in [2nr, 2(n+1)r).\]
The sequence $u_n$ is monotone nondecreasing, 
that is $u_n\leq u_{n+1}$, it satisfies $$
\lim_{n\to \pm \infty} u_n=\pm 1,$$ and  the recurrence relation \begin{equation}\label{ricorsiva1}
u_{n+2} = 2u_{n+1}-u_n +r^2 W'(u_{n+1}).
\end{equation}
\end{theorem}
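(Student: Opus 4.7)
The plan is to produce the required piecewise constant minimizer by applying a sliding/averaging procedure to a monotone heteroclinic minimizer $u$ given by Theorem~\ref{exthm} and Theorem~\ref{MONOTH}. For each shift $s\in[0,2r)$ I introduce the piecewise constant sampling
\[
\tilde u_s(x):=u(s+2nr)\quad\text{for }x\in[s+2nr,\,s+2(n+1)r),\ n\in\Z,
\]
which is monotone nondecreasing and satisfies $\tilde u_s(x)\to\pm 1$ as $x\to\pm\infty$, so it is an admissible competitor for the minimization problem in~\eqref{minmonotono}.

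The heart of the proof is a Fubini identity. Because $\tilde u_s$ is piecewise constant with jumps at $\{s+2nr\}_{n\in\Z}$ and the oscillation window $(x-r,x+r)$ has length exactly $2r$, this window contains at most one jump for a.e.~$x$, and a direct computation then yields
\[
\mathcal{E}(\tilde u_s)=\frac{1}{r}\sum_{n\in\Z}\bigl(u(s+2(n+1)r)-u(s+2nr)\bigr)^{2}+2r\sum_{n\in\Z}W\bigl(u(s+2nr)\bigr).
\]
Integrating over $s\in[0,2r)$ and applying the substitution $\sigma=s+2nr$ in each summand, the potential part integrates to $2r\int_{\R}W(u(\sigma))\,d\sigma$, while the oscillation part integrates to $\frac{1}{r}\int_{\R}(u(\sigma+2r)-u(\sigma))^{2}\,d\sigma$; after a shift of variables and using the monotonicity identity $\osc{(x-r,x+r)}u=u(x+r)-u(x-r)$ valid for a.e.~$x$, the latter equals $2r$ times the oscillation energy of $u$. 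Altogether,
\[
\int_{0}^{2r}\mathcal{E}(\tilde u_s)\,ds\,=\,2r\,\mathcal{E}(u).
\]

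Since $u$ minimizes $\mathcal{E}$ among monotone heteroclinics and each $\tilde u_s$ belongs to this class, $\mathcal{E}(\tilde u_s)\ge\mathcal{E}(u)$ for every $s$, so the averaged identity forces $\mathcal{E}(\tilde u_{s})=\mathcal{E}(u)$ for a.e.~$s\in[0,2r)$. Choosing any such $s^{*}$ and translating by $-s^{*}$ produces a monotone piecewise constant minimizer $\bar u$ with jumps at $\{2nr\}_{n\in\Z}$, and setting $u_{n}:=\bar u|_{[2nr,2(n+1)r)}$ gives a monotone nondecreasing sequence with $u_{n}\to\pm 1$. The function $\bar u$ is itself a local minimizer of $\mathcal{E}$: any compactly supported perturbation still satisfies~\eqref{LIMITS} and, by the monotonization argument underlying Theorem~\ref{MONOTH}, cannot have energy below the monotone minimum $\mathcal{E}(\bar u)$. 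Theorem~\ref{euler} therefore applies to $\bar u$, and evaluating its a.e.\ identity on the interior of each step interval $(2nr,2(n+1)r)$ yields $u_{n+1}+u_{n-1}-2u_{n}=r^{2}W'(u_{n})$, which after an index shift is exactly~\eqref{ricorsiva1}.

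The main technical obstacle is the sliding identity above: its proof hinges on the coincidence between the sampling step $2r$ and the oscillation diameter $2r$, which ensures that each shifted window sees exactly one jump and makes the Fubini computation collapse to an equality rather than a mere inequality. Once this identity is established, the remaining steps---monotonicity of $(u_n)$, the prescribed limits at $\pm\infty$, and extraction of the recurrence from the Euler--Lagrange equation---are routine.
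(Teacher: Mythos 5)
Your proposal is correct and takes a genuinely different route from the paper's. The paper proceeds via Lemma~\ref{competit2}: for each finite $R$ it constructs a piecewise constant monotone solution $v_R$ of the Dirichlet problem~\eqref{dirpb} (by reducing to a discrete minimization over a single period, solving the discrete problem, and rebuilding a step function), then passes to the limit $R\to+\infty$ as in the proof of Theorem~\ref{exthm}, and finally invokes Theorem~\ref{euler} to get the recurrence. You instead start from an arbitrary monotone heteroclinic minimizer $u$ and observe the averaging identity
\[
\int_0^{2r}\mathcal{E}(\tilde u_s)\,ds=2r\,\mathcal{E}(u),
\]
which is valid because the sampling step and the oscillation window both have length $2r$ (so each window of $\tilde u_s$ sees exactly one potential jump for a.e.\ $x$), and because for monotone $u$ one has $\osc_{(x-r,x+r)}u=u(x+r)-u(x-r)$ a.e., so the oscillation part of the average telescopes to the oscillation energy of $u$. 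Since each $\tilde u_s$ is an admissible competitor in~\eqref{minmonotono}, the average forces $\mathcal{E}(\tilde u_s)=\mathcal{E}(u)$ for a.e.~$s$, yielding a piecewise constant competitor realizing the minimum. This is a nice self-contained construction that avoids the finite-domain approximation entirely; the paper's approach, in return, produces the piecewise constant Dirichlet solutions that are anyway needed for Proposition~\ref{teoincr}, so it gets two results for one effort.

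One place where you are loose: to apply Theorem~\ref{euler} to $\bar u$ you need $\bar u$ to be a \emph{local} minimizer of~\eqref{CALE}, not merely a minimizer over monotone heteroclinics. You justify this by appealing to ``the monotonization argument underlying Theorem~\ref{MONOTH}'', but that theorem shows local minimizers are monotone, not that monotone minimizers are local minimizers. What is actually needed is the argument proving~\eqref{LOCAM} at the end of the proof of Theorem~\ref{exthm}: given any $\bar u$ with $\mathcal{E}(\bar u)=e$, one truncates a hypothetical better competitor $v$ to $v^M$, monotonizes it by Lemma~\ref{competit}, and compares with the finite-domain minimizer $u_M$, using $e_M\to e$ and~\eqref{eruiher0579}. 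That argument goes through verbatim with $\bar u$ in place of $u$ since the only input is $\mathcal{E}(\bar u)=e$, but you should cite the right mechanism. With that correction the derivation of~\eqref{ricorsiva1} from~\eqref{el1} by evaluating on step interiors is as you describe.
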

 
We point out that the recurrence
relation \eqref{ricorsiva1} is the discrete version of the Euler-Lagrange
equation in~\eqref{el1}.

\subsection{Uniqueness issues}\label{IAKNSHDKNDN}
An interesting problem which is left open in  the previous
description of the minimal heteroclinic connections is the issue of uniqueness. 
In this direction, in the forthcoming Corollary~\ref{coro1}
we provide
a result about nonuniqueness of  monotone solutions
to the Euler-Lagrange equation \eqref{el1} which satisfy \eqref{LIMITS}.
This is based on the construction of two different heteroclinic
sequences satisfying the recurrence relation in~\eqref{ricorsiva1},
as given in the following result: 

\begin{proposition} \label{corodis}
Assume that~\eqref{DOUBLEW} holds true and that~$W$ restricted to $[-1,1]$ is a
$C^1$ function.
Then, there exist two different sequences~$(\bar w_n)_{n\in\Z}$ and $(\bar z_n)_{n\in\Z}$ which 
satisfy the following properties:
\begin{itemize}
\item $(\bar w_n)_{n\in\Z}$ and $(\bar z_n)_{n\in\Z}$
are  monotone
nondecreasing in~$n$, 
\item $(\bar w_n)_{n\in\Z}$ and $(\bar z_n)_{n\in\Z}$
satisfy the recurrence relation \eqref{ricorsiva1}, 
\item $(\bar w_n)_{n\in\Z}$ and $(\bar z_n)_{n\in\Z}$ satisfy
the limit property
\begin{equation}\label{1.13bis}
\lim_{n\to \pm\infty}\bar w_n=\lim_{n\to \pm\infty}\bar z_n=\pm1,\end{equation}
\item
$(\bar w_n)_{n\in\Z}$ and $(\bar z_n)_{n\in\Z}$
are odd sequences, in the sense that 
\begin{equation}\label{PARDI}
\begin{split}
& \bar w_0=0 \qquad{ \mbox{ and } }\qquad\bar w_n=-\bar w_{-n}
\qquad{ \mbox{ for all } }
n>0,\\ & \bar z_{n}= -\bar z_{-n-1}
\qquad{ \mbox{ for all } }
n\geq 0.\end{split}\end{equation}\end{itemize} 
\end{proposition}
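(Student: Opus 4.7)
The approach will be to obtain $\bar w$ and $\bar z$ by the direct method of the calculus of variations, working in two symmetry classes that are tailored to the two oddness conditions in \eqref{PARDI}. Evaluating the functional $\mathcal{E}$ on piecewise constant functions with values $u_n$ on the intervals $[2nr, 2(n+1)r)$ reduces it, as in the proof of Theorem~\ref{costante}, to the discrete energy
\begin{equation*}
E(u) := \frac{1}{r} \sum_{n \in \Z} (u_{n+1} - u_n)^2 + 2r \sum_{n \in \Z} W(u_n),
\end{equation*}
whose formal Euler--Lagrange equation is precisely \eqref{ricorsiva1}. Define the classes
\begin{equation*}
\mathcal{W} := \bigl\{(w_n)_{n\in\Z} : w_n\le w_{n+1},\ w_0 = 0,\ w_{-n} = -w_n,\ \lim_{n\to\pm\infty} w_n = \pm 1\bigr\},
\end{equation*}
\begin{equation*}
\mathcal{Z} := \bigl\{(z_n)_{n\in\Z} : z_n\le z_{n+1},\ z_{-n-1} = -z_n,\ \lim_{n\to\pm\infty} z_n = \pm 1\bigr\},
\end{equation*}
both of which are nonempty with finite $E$ (an explicit competitor in $\mathcal{W}$ is $w_n = \max\{-1,\min\{1,n/N\}\}$, and in $\mathcal{Z}$ one may take $z_n = \max\{-1,\min\{1,(n+\tfrac12)/N\}\}$).

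For each of the two classes I would apply the direct method. Monotonicity and the limit conditions force $|w_n|, |z_n|\le 1$, so minimizing sequences are bounded, and a diagonal extraction yields pointwise limits; monotonicity, symmetry and the limits at $\pm\infty$ are preserved under such limits (the latter by a comparison with the constant sequences $\pm 1$). Lower semicontinuity of $E$ under pointwise convergence follows from Fatou's lemma, applied separately to the nonnegative quadratic and potential terms, and produces minimizers $\bar w\in\mathcal{W}$ and $\bar z\in\mathcal{Z}$.

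To derive the recurrence \eqref{ricorsiva1}, set $f_n := \tfrac{2}{r}(2w_n - w_{n+1} - w_{n-1}) + 2r W'(w_n)$ and consider a compactly supported perturbation $(\eta_n)$ respecting the relevant symmetry ($\eta_0=0$, $\eta_{-n}=-\eta_n$ for $\mathcal{W}$; $\eta_{-n-1}=-\eta_n$ for $\mathcal{Z}$), small enough not to break monotonicity on its support. Since $W$ is even on $[-1,1]$, $W'$ is odd there, and one checks directly that $f_{-n} = -f_n$ in the $\mathcal{W}$ case (respectively $f_{-n-1} = -f_n$ in the $\mathcal{Z}$ case). Pairing reflected indices, the first variation $\sum_n f_n\eta_n = 0$ collapses to $2\sum_{n\ge 1} f_n\eta_n = 0$ (resp.\ $2\sum_{n\ge 0} f_n\eta_n = 0$) for arbitrary such $\eta$, forcing $f_n = 0$ on the support. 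The recurrence at the ``pivot'' indices ($n = 0$ for $\bar w$ and $n\in\{-1,0\}$ for $\bar z$) is automatic thanks to the symmetry of the sequence together with the identity $W'(0) = 0$, which follows from the evenness of $W$ on $[-1,1]$ and the fact that $0$ is its unique interior local maximum.

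Finally, $\bar w$ and $\bar z$ are distinct: any nontrivial element of $\mathcal{Z}$ satisfies $\bar z_0 > 0 > \bar z_{-1}$ strictly, since $\bar z_0 = \bar z_{-1} = 0$ combined with $W'(0) = 0$ and \eqref{ricorsiva1} would propagate $\bar z_n \equiv 0$, contradicting \eqref{1.13bis}, whereas $\bar w_0 = 0$ by definition. The main technical obstacle will be to ensure that the monotonicity constraint does not interfere with the Euler--Lagrange derivation; this is handled by noting that the equivariance of $E$ under both symmetries (provided by the evenness of $W$) means that the unconstrained Euler--Lagrange equation is already compatible with the symmetric constraint, and any flat pieces of the minimizer can only occur at values where $W'$ vanishes, which is itself consistent with \eqref{ricorsiva1}.
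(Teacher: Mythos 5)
Your overall strategy---minimize the discrete energy in symmetry classes that encode the two oddness conditions, then deduce the recurrence from symmetric perturbations together with the oddness of $W'$ on $[-1,1]$---is the same as the paper's. Two of the steps as you have written them do not, however, go through.

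The main gap is the monotonicity constraint that you build into $\mathcal{W}$ and $\mathcal{Z}$. At a flat piece $\bar w_n = \bar w_{n+1}$ (with $n\ge 1$) there is no nonzero symmetric perturbation that preserves monotonicity for both signs of $\delta$, so the first variation in your constrained class yields only a sign condition on $f_n$, not \eqref{ricorsiva1}. A short computation of the one-sided variations at such a flat piece gives $f_n\le 0\le f_{n+1}$ and $f_n+f_{n+1}=0$, which is perfectly consistent with $f_n\neq 0$; so your closing assertion that ``flat pieces can only occur at values where $W'$ vanishes'' is not a consequence of anything you wrote, and it is in fact the thing that would need proof (note also that under \eqref{DOUBLEW} the only zeros of $W'$ in $[-1,1]$ are $0$ and $\pm 1$, so the assertion is quite strong). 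The paper avoids this altogether: the finite problems \eqref{dirdiscreto1}--\eqref{dirdiscreto2} impose only the affine symmetry and boundary constraints, with no monotonicity requirement, so $w^K+\delta\phi$ is admissible for every $\delta\in\R$ and the first variation is an equality; a monotone minimizer is then produced a posteriori by the rearrangement argument of Lemma~\ref{competit}. Your version could be salvaged by the same observation---that by such a rearrangement a minimizer in $\mathcal{W}$ or $\mathcal{Z}$ is automatically a minimizer in the corresponding unconstrained symmetric class, and one should differentiate there---but that point needs to be made explicitly.

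Two secondary issues. First, working directly on the infinite problem, your parenthetical ``by comparison with the constant sequences $\pm1$'' does not rule out $\lim_{n\to+\infty}\bar w_n=\ell<1$ for the pointwise limit of a minimizing sequence. What works is the uniform energy bound together with monotonicity, $w_0=0$ and $W>0$ off $\pm 1$: for each $\eta>0$, the number of $n\ge 0$ with $w^k_n\le 1-\eta$ is bounded uniformly in $k$, hence $\bar w_n\ge 1-\eta$ for $n$ large. The paper avoids the issue by solving finite truncations first (where $\pm 1$ are pinned boundary values) and then sending $K\to\infty$. Second, for $\bar z$ the recurrence at $n=0$ is \emph{not} automatic from symmetry and $W'(0)=0$: since $\bar z_0\neq 0$, $W'(\bar z_0)\neq 0$ in general. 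It is however already covered by your own variation (you allow $\eta_0\neq 0$ in $\mathcal{Z}$, and $f_{-1}=-f_0$ gives $2f_0\eta_0=0$, hence $f_0=0$); only for $\bar w$, where $\bar w_0=0$, is the pivot equation truly automatic.
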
 

We recall that existence of heteroclinic solutions to discrete
recurrence relations such as \eqref{ricorsiva1} has been
also considered in the literature, see e.g.~\cite{xiao1,  hxt}.
A consequence of Proposition \ref{corodis} is the following result.

 \begin{corollary}[Lack of uniqueness for heteroclinic solutions]\label{coro1}
 Assume that~\eqref{DOUBLEW} holds true and that~$W$ restricted to $[-1,1]$ is a
$C^1$ function.
Then, there exist two geometrically
 different monotone nondecreasing functions $u,v:\R\to [-1,1]$ which satisfy~\eqref{LIMITS} and are solutions to the
 Euler-Lagrange equation~\eqref{el1}. \end{corollary}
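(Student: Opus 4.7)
The plan is to promote the two discrete sequences $(\bar w_n)_{n\in\Z}$ and $(\bar z_n)_{n\in\Z}$ provided by Proposition~\ref{corodis} to two piecewise constant functions on $\R$, and then to check that they satisfy every property required by the corollary. Concretely, I would set
\[
u(x):=\bar w_n \quad\text{and}\quad v(x):=\bar z_n
\qquad\text{for every }x\in[2nr,\,2(n+1)r),\ n\in\Z.
\]
Both $u$ and $v$ take values in $[-1,1]$, are monotone nondecreasing because the generating sequences are, and satisfy~\eqref{LIMITS} as a direct consequence of~\eqref{1.13bis}.

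The verification of the Euler--Lagrange equation~\eqref{el1} is immediate once this piecewise constant structure has been set up: for any $x\in[2nr,2(n+1)r)$ one has $x+2r\in[2(n+1)r,2(n+2)r)$ and $x-2r\in[2(n-1)r,2nr)$, hence
\[
\frac{u(x+2r)+u(x-2r)-2u(x)}{r^2}
=\frac{\bar w_{n+1}+\bar w_{n-1}-2\bar w_n}{r^2}
= W'(\bar w_n)= W'(u(x)),
\]
where the second equality is the recurrence~\eqref{ricorsiva1} shifted by one index. The identical computation applies to $v$, so both functions solve~\eqref{el1} at every $x\in\R\setminus 2r\Z$, hence a.e.

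The actual content of the statement, and the only step requiring more than bookkeeping, is to show that $u$ and $v$ are \emph{geometrically} different, i.e. that no translate of $v$ equals $u$. For this I would use the parity conditions~\eqref{PARDI}. On one hand, $\bar w_0=0$ yields $u\equiv 0$ on $[0,2r)$, so $0$ belongs to the range of $u$. On the other hand, the relation $\bar z_n=-\bar z_{-n-1}$ combined with monotonicity forces $\bar z_0\geq 0$; were $\bar z_0=0$, then also $\bar z_{-1}=0$, and the recurrence~\eqref{ricorsiva1} together with $W'(0)=0$ (which follows from $W$ being even on $[-1,1]$ with a unique interior local maximum at $t=0$) would propagate the value $0$ to the whole sequence, contradicting~\eqref{1.13bis}. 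Hence $\bar z_0>0$ and, by monotonicity and odd symmetry, $\bar z_n>0$ for $n\geq 0$ while $\bar z_n<0$ for $n\leq -1$; in particular $0$ is not in the range of $v$. Since the range is translation invariant, $u$ cannot coincide with any translate of $v$, and the corollary follows. The main subtlety is precisely this last step: it is the parity structure~\eqref{PARDI}, pinning down the symmetry of $\bar w$ at an integer and of $\bar z$ at a half--integer, that rules out the trivial possibility that the two sequences are mere shifts of one another.
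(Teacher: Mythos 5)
Your construction coincides with the paper's: you lift $(\bar w_n)$ and $(\bar z_n)$ to piecewise constant functions on intervals of length $2r$, and the verification of~\eqref{el1} via the index shift in~\eqref{ricorsiva1} is exactly right. The only place you genuinely diverge from the paper is the final, essential step of showing that $u$ and $v$ are not translates of one another, and there your argument is cleaner. The paper compares the \emph{number} of intervals $I_n=[2nr,2(n+1)r)$ on which each function vanishes: using~\eqref{PARDI} it shows $u=0$ on an odd number of $I_n$'s while $v=0$ on an even number, and implicitly relies on the fact that any translation identifying two nonconstant step functions with jumps in $2r\Z$ must itself lie in $2r\Z$. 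You instead observe that $0$ lies in the range of $u$ (since $\bar w_0=0$) but not in the range of $v$, and the range is manifestly invariant under \emph{all} translations, so no extra observation about the translation amount is needed. Your supporting claim that $\bar z_0>0$ is correct: parity gives $\bar z_{-1}=-\bar z_0$, monotonicity forces $\bar z_0\ge 0$, and if $\bar z_{-1}=\bar z_0=0$ then, since $W\in C^1([-1,1])$ is even with a local maximum at $0$ so that $W'(0)=0$, the recursion~\eqref{ricorsiva1} forces $\bar z_n\equiv 0$, contradicting~\eqref{1.13bis}. In fact your argument proves the sharper fact that $v$ never takes the value $0$ at all, which subsumes the paper's even-count claim; the same propagation argument applied to $\bar w$ would similarly show that $u=0$ on exactly one interval, sharpening the paper's odd-count claim. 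So: correct proof, same overall route, but a more robust and self-contained distinguishing invariant in the last step.
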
 
 
\subsection{The Dirichlet problem}\label{THEDIRIC}
We now observe that the lack of regularity that we pointed out for solutions to the Euler-Lagrange equation \eqref{el1}
is a general phenomenon  in  equations involving the  discrete difference operator
\begin{equation}\label{Dr} D_r u(x):= \frac{u(x+r)+u(x-r)-2u(x)}{r^2}.\end{equation}
In particular, we consider  the Dirichlet problem associated to this operator with source term $f\in L^{\infty}_{{\rm{loc}}}(\R)$ and boundary data 
$\alpha, \beta\in L^{\infty}_{{\rm{loc}}}(\R)$:
\begin{equation}\label{DS} \left\{\begin{matrix}
D_r u=f&{\mbox{ in }}(a,b),\\
u=\alpha&{\mbox{ in }}[a-r,a],\\
u=\beta&{\mbox{ in }}[b,b+r].
\end{matrix}\right.\end{equation}
In this setting, we provide basic existence, uniqueness and regularity properties of
the solutions of~\eqref{DS}.
We start by showing that there exists a unique solution to \eqref{DS}, according to the following result: 
\begin{theorem}[Dirichlet problem for~$D_r$]\label{DIRI}
The system in~\eqref{DS} has a unique solution $u$ (up to sets of zero measure), which is given by the function
\begin{equation}\label{SOLUZIONE} u(x)
:= \left\{
\begin{matrix}
\alpha(x) & {\mbox{ if }} x\in[a-r,a],\\
\\
\\
\begin{matrix}
\displaystyle\frac{\overline k(x)}{\overline k(x)+\underline k(x)}\,\Bigg(\alpha(x-\underline k (x)r)-r^2\, \displaystyle\sum_{j=1}^{\underline k(x)-1} j f\big(x-(\underline k(x)-j)r\big)\Bigg)
 \\ 
+ \displaystyle\frac{\underline k(x)}{\overline k(x)+\underline k(x)}\,\Bigg(\beta(x+\underline k (x)r)-r^2  \,\displaystyle\sum_{j=1}^{\overline k(x)-1} j f\big(x+(\overline k(x)-j)r\big) \Bigg)
\\
 -r^2\displaystyle\frac{\underline k(x)\overline k(x)}{\overline k(x)+\underline k(x)} \,f(x)  
\end{matrix} & {\mbox{ if }}x\in(a,b),\\
\\
\\
\beta(x) & {\mbox{ if }}x\in[b,b+r],
\end{matrix}\right.
\end{equation}
where
\begin{equation}\label{DK}
\overline k(x):=\left\lceil \frac{b-x}{r}\right\rceil \qquad{\mbox{and}}\qquad
\underline k(x):=\left\lceil \frac{x-a}{r}\right\rceil.
\end{equation}
\end{theorem}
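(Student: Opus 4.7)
The plan is to exploit the key structural feature of the operator $D_r$: it is \emph{lattice-local}, in the sense that $D_r u(x)$ depends only on the values of $u$ at the three points $x-r,\,x,\,x+r$, all of which lie on the single lattice $L_x := x + r\,\Z$. Consequently the Dirichlet problem \eqref{DS} decouples into an independent family of one-dimensional problems, one for each lattice $L_{x_0}$ as $x_0$ varies over a fundamental domain, say $x_0\in(a,a+r]$. A direct check using the definition \eqref{DK} shows that each such lattice meets $[a-r,a]$ in the single node $x-\underline k(x)\,r$, meets $[b,b+r]$ in the single node $x+\overline k(x)\,r$, and contains exactly $N:=\underline k(x)+\overline k(x)-1$ nodes inside $(a,b)$ between them.

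On each lattice the problem reduces to the finite tridiagonal linear system
\[
v_{i+1}-2v_i+v_{i-1} \;=\; r^{2}\, f_i,\qquad i=1,\dots,N,
\]
with the prescribed boundary values $v_0=\alpha(x-\underline k(x)r)$ and $v_{N+1}=\beta(x+\overline k(x)r)$. The system matrix is the classical discrete Laplacian, which is strictly negative definite (this follows from a one-line discrete maximum principle: any nontrivial homogeneous solution with zero boundary data would attain a strict interior extremum, contradicting the mean-value identity $v_i=\tfrac12(v_{i-1}+v_{i+1})$). This yields both existence and uniqueness of $u$ up to null sets, as asserted in the statement.

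To arrive at the explicit formula \eqref{SOLUZIONE} I would use the superposition $v=v_h+v_p$, with $v_h$ the discrete-harmonic lift of the boundary data and $v_p$ the solution with zero boundary values and source $r^{2} f_i$. The harmonic component is affine in the node index, giving
\[
v_h\!\big(\underline k(x)\big) \;=\; \frac{\overline k(x)}{\overline k(x)+\underline k(x)}\,v_0 \;+\; \frac{\underline k(x)}{\overline k(x)+\underline k(x)}\,v_{N+1},
\]
which reproduces the first piece of \eqref{SOLUZIONE}. The particular part is expressed through the classical discrete Green's function $G(i,j) = \tfrac{\min(i,j)\,(N+1-\max(i,j))}{N+1}$ as $v_p(\underline k(x)) = -r^{2}\sum_{j=1}^{N} G(\underline k(x),j)\, f_j$; splitting this sum according to whether $j<\underline k(x)$, $j=\underline k(x)$, or $j>\underline k(x)$, and reindexing by the original spatial variable through the substitutions $j\leftrightarrow x-(\underline k(x)-j)r$ on the left and $j\leftrightarrow x+(\overline k(x)-j)r$ on the right, produces precisely the three remaining terms of \eqref{SOLUZIONE}.

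The main obstacle is bookkeeping rather than conceptual: the indices $\underline k(\cdot)$ and $\overline k(\cdot)$ are piecewise constant step functions of $x$, so one must verify that the function $u$, defined lattice by lattice through the formula above, actually satisfies $D_r u(x)=f(x)$ for all $x\in(a,b)$ and is compatible, in the a.e.\ sense, with the boundary data $\alpha,\beta$ at the exceptional points $x=a+kr$ and $x=b-kr$ where $\underline k$ or $\overline k$ jumps by one. Establishing this compatibility, so that no spurious contributions appear in the three-point equation when one of the neighbours $x\pm r$ crosses the interface between the boundary strips and $(a,b)$, is the most delicate part of the argument, but ultimately amounts to a careful index shift.
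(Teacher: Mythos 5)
Your proposal is correct and it reaches the same formula by a genuinely different route from the paper. The paper proceeds in two independent steps: uniqueness is obtained from a Maximum Principle for $D_r$ on $L^\infty_{\rm loc}$ (Lemma~\ref{MAX PLE}, proved by locating an approximate essential maximum and deriving a contradiction from the inequality $D_r u\ge 0$ there), and existence is obtained by brute force, plugging the expression~\eqref{SOLUZIONE} into $D_r u(x)=f(x)$ and checking it in four separate cases according to whether $x\pm r$ fall in $(a,b)$ or in the boundary strips. You instead exploit the lattice locality of $D_r$ to decouple~\eqref{DS} into a continuum of finite tridiagonal systems on the lattices $x_0+r\Z$, each with boundary nodes $x-\underline k(x)r\in(a-r,a]$ and $x+\overline k(x)r\in[b,b+r)$ and $N=\underline k(x)+\overline k(x)-1$ interior nodes; uniqueness then follows from the invertibility of the finite discrete Laplacian (which in effect absorbs the role of the maximum principle), and the formula~\eqref{SOLUZIONE} is \emph{derived} rather than verified, by superposing the affine discrete-harmonic lift of the boundary data with the discrete Green's function representation $v_p(i)=-r^2\sum_{j=1}^{N}G(i,j)f_j$, $G(i,j)=\min(i,j)(N+1-\max(i,j))/(N+1)$. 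Evaluating at $i=\underline k(x)$ and reindexing the right half of the sum via $m=\underline k(x)+\overline k(x)-j$ reproduces the three inhomogeneous terms of~\eqref{SOLUZIONE} exactly, and the transition points in $(a+r\N)\cup(b-r\N)$ are harmless since the theorem is stated up to null sets. Your derivation is more illuminating, since it explains the provenance of the coefficients; the paper's verification is more elementary and self-contained, avoiding the Green's-function machinery at the cost of a long case-by-case computation.
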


A useful tool towards the proof of the uniqueness result in Theorem~\ref{DIRI}
consists in a suitable
Maximum Principle for the operator $D_r$ (which will be presented in Lemma~\ref{MAX PLE}).
\medskip

We analyze now the regularity of the solution to \eqref{DS}. 
In particular, we obtain a uniform bound on the solution in terms of
the external data~$\alpha$ and~$\beta$, and of the source function~$f$.
Then, we bound the possible jumps of the solution by a quantity that depends
on~$r$, $\alpha$, $\beta$ and~$f$ (and which becomes small as~$r\searrow0$).

\begin{corollary}[Continuity and jump bounds for the Dirichlet problem for~$D_r$]\label{COROCONT}
Let ~$\alpha\in C([a-r,a])$, $\beta\in C([b,b+r])$, and~$f\in C([a,b])$,
and let~$u$ be the solution
to~\eqref{DS}.

Then, $u\in L^\infty([a-r,b+r])$, with
\begin{equation}\label{BOUND LINF}
\| u\|_{ L^\infty([a-r,b+r]) }\le \|\alpha\|_{ L^\infty([a-r,a]) }
+\sup_{{p\in[a-r,a]}\atop{q\in[b,b+r]}}|\alpha(p)-\beta(q)|+\big((b-a)^2+r^2\big)\,\|f\|_{L^\infty([a,b])}.
\end{equation} 

Also, we have that~$u\in
C([a-r,b+r]\setminus {\mathcal{J}})$,
where
$$ {\mathcal{J}}:= (a+r\N)\cup(b-r\N).$$
Moreover, at any points of~${\mathcal{J}}$,
the function~$u$ jumps by at most
\begin{equation}\label{JUMPATMOST}
\sup_{[a-r,a]}\alpha-\inf_{[a-r,a]}\alpha +
\frac{r}{b-a}\sup_{{p\in[a-r,a]}\atop{q\in[b,b+r]}}|\alpha(p)-\beta(q)|+
\frac{Cr}{b-a}\,\big( (b-a)^2+r^2\big)\,\|f\|_{L^\infty([a,b])}
\end{equation}
for some $C>0$ depending on $a,b,r$. 
\end{corollary}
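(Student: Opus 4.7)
The plan is to deduce all three claims directly from the explicit formula \eqref{SOLUZIONE} for $u$ supplied by Theorem~\ref{DIRI}, exploiting that continuity of $\alpha,\beta,f$ on their respective compact intervals forces uniform continuity. For the $L^\infty$ bound \eqref{BOUND LINF}, on $[a-r,a]\cup[b,b+r]$ one has $u=\alpha$ or $u=\beta$, and the trivial inequality $|\beta(q)|\leq|\alpha(p)|+|\alpha(p)-\beta(q)|$ yields \eqref{BOUND LINF} there. For $x\in(a,b)$, I would rewrite the convex combination of the $\alpha$- and $\beta$-contributions in \eqref{SOLUZIONE} as
\[
\alpha(x-\underline{k}(x)r)+\frac{\underline{k}(x)}{\overline{k}(x)+\underline{k}(x)}\Big(\beta(x+\overline{k}(x)r)-\alpha(x-\underline{k}(x)r)\Big),
\]
plus the three source-type remainders. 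The first piece is controlled by $\|\alpha\|_\infty+\sup|\alpha-\beta|$. For the remainders, the bounds $r\,\underline{k}(x),r\,\overline{k}(x)\leq(b-a)+r$ together with $\sum_{j=1}^{k-1}j\leq k^2/2$ reduce every term to a constant multiple of $((b-a)^2+r^2)\|f\|_\infty$, matching \eqref{BOUND LINF}.

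The continuity of $u$ off $\mathcal{J}$ is the easiest part: on $[a-r,a)$ and $(b,b+r]$ one has $u=\alpha$ or $u=\beta$, while on every connected component of $(a,b)\setminus\mathcal{J}$ the integers $\underline{k}(x)$ and $\overline{k}(x)$ are constant, so \eqref{SOLUZIONE} reduces to a finite arithmetic combination of values of $\alpha,\beta,f$ at affine functions of $x$; continuity of the data then gives continuity of $u$ on each such component.

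The jump estimate \eqref{JUMPATMOST} is the delicate part. At a generic $x_0\in\mathcal{J}$ exactly one of $\underline{k},\overline{k}$ jumps by one as $x$ crosses $x_0$ (the finitely many coincidence points at which both jump being treated analogously), and by symmetry it is enough to deal with $x_0=a+nr$, where $\underline{k}$ jumps from $n$ to $n+1$ while $\overline{k}$ equals some integer $m$ on both sides. Substituting into \eqref{SOLUZIONE} and forming $u(x_0^+)-u(x_0^-)$, three types of contribution arise: (i) the argument of $\alpha$ shifts from $x_0-nr$ to $x_0-(n+1)r$, both sitting in $[a-r,a]$, producing a term bounded by $\osc{[a-r,a]}\alpha$, which is the first term of \eqref{JUMPATMOST}; (ii) the weights $\overline{k}/(\overline{k}+\underline{k})$ and $\underline{k}/(\overline{k}+\underline{k})$ change by an amount of order $1/(m+n)\asymp r/(b-a)$, and paired with $\beta(\cdot)-\alpha(\cdot)$ this gives the second term of \eqref{JUMPATMOST}; (iii) the two source sums acquire one extra index and the diagonal factor $r^2\underline{k}\overline{k}/(\overline{k}+\underline{k})$ shifts, which combined with the extracted $r/(b-a)$ from the weight change yields the third term of \eqref{JUMPATMOST}.

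The hard part is the algebraic bookkeeping in step (iii): one must reorganize the many summands of $u(x_0^+)-u(x_0^-)$ so that a clean prefactor $r/(b-a)$ comes out of every weight variation and every reindexed sum, while isolating the ``principal'' $\alpha$-argument shift as the $\osc{[a-r,a]}\alpha$ contribution. Uniform continuity of $\alpha,\beta,f$ on their compact domains then lets one pass to the one-sided limits $x\to x_0^\pm$ in all continuous arguments, matching the two sides of the identity and concluding \eqref{JUMPATMOST}.
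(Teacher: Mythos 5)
Your proposal follows the same route as the paper: all three claims are read off the explicit solution formula from Theorem~\ref{DIRI}, the $L^\infty$ bound via the rewrite $u=\alpha+\tfrac{\underline k}{\overline k+\underline k}(\beta-\alpha)-\text{source}$ together with the elementary estimates $r\,\overline k(x),\,r\,\underline k(x)\le (b-a)+r$ and $\sum_{j<k}j\le k^2/2$; the continuity away from $\mathcal J$ by noting that $\underline k,\overline k$ are locally constant off $\mathcal J$ so that \eqref{SOLUZIONE} becomes a finite combination of continuous evaluations; the jump bound by tracking the unit jumps of $\underline k,\overline k$ together with $\overline k+\underline k\ge (b-a)/r$. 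The paper's own proof is exactly this, and in fact is considerably terser on the jump estimate than your sketch.

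One point deserves a warning, however, concerning your invocation of ``by symmetry, it is enough to deal with $x_0=a+nr$''. The stated bound \eqref{JUMPATMOST} is \emph{not} symmetric in $\alpha$ and $\beta$: its leading term is $\osc{[a-r,a]}\alpha$ only. If you repeat your computation at a point $x_0=b-mr\in\mathcal J$, the argument of $\beta$ shifts between $b$ and $b+r$ while $\alpha$'s argument is continuous; writing $u=\tfrac{\overline k}{\overline k+\underline k}\alpha(\cdot)+\tfrac{\underline k}{\overline k+\underline k}\beta(\cdot)-\text{source}$ and isolating the jump, the term you pick up is $w^+\big(\beta(\phi^+)-\beta(\phi^-)\big)$ with $w^+=\tfrac{\underline k}{\overline k+\underline k}$, which is bounded by $\osc{[b,b+r]}\beta$ but \emph{not} by $\osc{[a-r,a]}\alpha$, nor by $\tfrac r{b-a}\sup|\alpha-\beta|$ (since $w^+$ can be close to $1$ when $x_0$ is near $b$). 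So symmetry gives you a $\osc\beta$ term for those points, not an $\osc\alpha$ term; the bound you would actually prove is $\max(\osc\alpha,\osc\beta)$ (or $\osc\alpha+\osc\beta$) in place of the first summand of \eqref{JUMPATMOST}. This discrepancy is inherited from the paper's own statement, which appears to omit the $\osc\beta$ contribution, so you should make the asymmetry explicit rather than hide it under ``by symmetry.'' Apart from this, the algebraic bookkeeping in your step (iii) is only sketched, but the scalings you quote ($|w^+-w^-|\lesssim r/(b-a)$, each reindexed source sum gaining one term of size $O(r^2\,\underline k\,\|f\|_\infty)=O(r(b-a)\|f\|_\infty)$) are the right ones and do lead to the third term of \eqref{JUMPATMOST}.
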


It is interesting to observe that the jump bound in~\eqref{JUMPATMOST}
improves as~$r\searrow0$ and in fact it recovers continuity in the limit
(and this fact can be also compared with
the asymptotic result of Proposition~\ref{ASY}).
As a counterpart of this observation, we stress that
the Dirichlet problem run by the operator~$D_r$ does exhibit,
in general, discontinuous solutions:

\begin{corollary}
[Lack of regularity
and discontinuity of the solutions of the Dirichlet problem]  \label{costanteDIR}
Fix~$n\in\N$, with~$n>1$.
The solution of the
Dirichlet problem \eqref{DS}
with~$r:=1/n$, $\alpha:=0$, $\beta:= 1$, $f:= 0$, $a:=0$ and~$b:=1$
is discontinuous.
\end{corollary}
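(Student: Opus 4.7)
The plan is to apply Theorem \ref{DIRI} directly and compute the solution explicitly for the given data. With $a=0$, $b=1$, $r=1/n$, $\alpha\equiv 0$, $\beta\equiv 1$ and $f\equiv 0$, all the sums in the formula \eqref{SOLUZIONE} vanish identically (since each carries a factor of $f$), and the $\alpha$-contribution vanishes as well. Hence, for $x\in(0,1)$, the formula collapses to
\[
u(x)=\frac{\underline{k}(x)}{\overline{k}(x)+\underline{k}(x)},
\]
while $u(x)=0$ on $[-1/n,0]$ and $u(x)=1$ on $[1,1+1/n]$.

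Next, I would compute $\underline{k}$ and $\overline{k}$ explicitly on each subinterval. For $k\in\{1,\dots,n\}$ and $x\in\bigl(\tfrac{k-1}{n},\tfrac{k}{n}\bigr)$, one has $nx\in(k-1,k)$ and $n(1-x)\in(n-k,n-k+1)$, so by \eqref{DK},
\[
\underline{k}(x)=k,\qquad \overline{k}(x)=n-k+1,\qquad \overline{k}(x)+\underline{k}(x)=n+1.
\]
Therefore $u$ is piecewise constant on $(0,1)$, with
\[
u(x)=\frac{k}{n+1}\qquad\text{for }x\in\Bigl(\tfrac{k-1}{n},\tfrac{k}{n}\Bigr),\quad k=1,\dots,n.
\]

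Finally, I would exhibit the discontinuity. Since $u(0)=\alpha(0)=0$ while $\lim_{x\to 0^+}u(x)=\tfrac{1}{n+1}$, and since $n>1$ yields $\tfrac{1}{n+1}\in(0,\tfrac12)$, we see that $u$ has a jump of size $\tfrac{1}{n+1}>0$ at $x=0$, and is thus discontinuous. (In fact the same argument shows jumps of size $\tfrac{1}{n+1}$ at every point of the grid $\tfrac{k}{n}$ inside $[0,1]$, consistent with Corollary \ref{COROCONT} identifying $\mathcal{J}=(a+r\N)\cup(b-r\N)$ as the possible jump set.)

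There is no substantial obstacle here; once the closed-form expression from Theorem \ref{DIRI} is in hand, the only work is the elementary ceiling computation and reading off the boundary jump. The one point requiring a line of care is to justify that the summation terms in \eqref{SOLUZIONE} really do vanish when $f\equiv 0$ (they are finite sums, each term containing a factor $f(\cdot)$, so this is immediate).
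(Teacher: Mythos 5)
Your proposal is correct and takes the same route as the paper: substitute the data into the closed-form solution \eqref{SOLUZIONE} from Theorem~\ref{DIRI}, evaluate $\underline k$ and $\overline k$ explicitly, and read off a jump. In fact your way of organizing the ceiling computation (parametrizing by the sub-interval index $k$, so that on $\bigl(\tfrac{k-1}{n},\tfrac{k}{n}\bigr)$ one has $\underline k(x)=k$, $\overline k(x)=n-k+1$, $\overline k+\underline k=n+1$, and $u=\tfrac{k}{n+1}$) is cleaner and more reliable than the paper's split into $nx\in\N$ versus $nx\notin\N$: the paper's displayed computation contains a sign slip, writing $\overline k(x)=n-\underline k(x)-1$ and $u=\tfrac{r}{1-r}\bigl\lceil \tfrac xr\bigr\rceil$ where the correct expressions are $\overline k(x)=n-\underline k(x)+1$ and $u=\tfrac{r}{1+r}\bigl\lceil \tfrac xr\bigr\rceil$, matching your $\tfrac{k}{n+1}$; the paper's formula would even yield values of $u$ exceeding $1$, and its remark that $u\equiv 0$ on $[-r,r)$ is inconsistent with both formulas. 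Your derivation avoids all of this and cleanly exhibits the jump of size $\tfrac{1}{n+1}$ at $x=0$ (and, as you note parenthetically, at every interior grid point $k/n$), so the corollary is correctly established.
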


The discontinuous example in Corollary~\ref{costanteDIR}
can be seen as a natural counterpart in the setting of the Dirichlet problem~\eqref{DS}
of the phenomenon discussed
in Theorem~\ref{costante} in the case of global heteroclinics.

\subsection*{Plan of the paper} 
The rest of this paper is organized as follows. 
Section \ref{secex} contains the construction of a local minimizer
to~\eqref{CALE} which connects monotonically $-1$ and $1$,
that is the proof of Theorem~\ref{exthm}.
This construction is obtained by approximation,
by solving suitable Dirichlet problems.
In Section \ref{secmon} we prove that every local minimizer to~\eqref{CALE}
which connects the two variationally
stable equilibria is monotone, namely Theorem~\ref{MONOTH}.
 Section \ref{seceuler} is devoted to the proof of Theorem \ref{euler}. This is obtained by introducing a new functional $\mathcal{F}$,
 which coincides with $\mathcal{E}$ on monotone functions.
 
The asymptotics as~$r\searrow0$ is discussed in Section~\ref{ASYS},
which contains the proof of Proposition~\ref{ASY}.
Then, Section \ref{secdiscrete} contains the proofs of Theorem \ref{costante}, Proposition \ref{corodis}, and Corollary \ref{coro1},  so in particular it contains the analysis of the discrete version of the 
 Euler-Lagrange equation \eqref{el1}, and the nonuniqueness issues described
in Proposition \ref{corodis}
and Corollary \ref{coro1} are discussed in Section~\ref{NOAM:S}.

Finally, in Section~\ref{secdir} we consider the
Dirichlet problem for $D_r$, and we present the proofs of
Theorem~\ref{DIRI}, and of Corollaries \ref{COROCONT}
and~\ref{costanteDIR}.

\subsection*{Notation}  
In the $\sup$, $\inf$, $\limsup$ and $\liminf$ notation, we mean
the ``essential supremum and infimum'' of the function (i.e.,
sets of null measure are neglected) and the essential   superior and  inferior limit of a function at a point.
Moreover, we shall identify a set~$E\subseteq \R^n$ with its points of
density one, and~$\partial E$ with the topological boundary of the set of points of density one. 

For $x\in \R$, we will denote   with $\left\lceil x\right\rceil $  (resp. $\left\lfloor x\right\rfloor $ )
the smallest integer~$z$ such that~$x\le z$ (the biggest   integer~$z$ such that~$x\ge z$) that is 
$$\left\lceil x\right\rceil:=\min \{z\in\Z {\mbox{ s.t. }} x\leq z\}\qquad(\text{resp. }\left\lfloor x\right\rfloor:=\max\{z\in\Z {\mbox{ s.t. }} x\geq z\}).$$
Finally for any  $u:I\subset \R \to \R$ monotone function,
we will always identify $u$ with its   right continuous representative.

\section{Existence of minimal heteroclinic connections, and proofs of
Theorem~\ref{exthm} and Proposition~\ref{propminimi}}\label{secex}

The construction of the local minimizer given by Theorem~\ref{exthm}
will be obtained by approximations,
using solutions to appropriate Dirichlet problems. 

To this end, we fix $R>2r$ and consider the minimization problem
\begin{equation}\label{dirpb}
\inf \Big\{ \mathcal{E}_{(-R,R)} (u) {\mbox{ s.t. }} u\in L^{\infty}(\R),
\, u(x)=1 \,\mbox{ for a.e. $x\geq R-r$ and }\,u(x)=-1\,\mbox{ for a.e. $x\leq -R+r$}\Big\}. 
\end{equation} 
To prove that \eqref{dirpb} admits a minimum, we will use standard direct
method in the calculus of variations. First of all, though, we need to 
restrict the space of competitors to gain some more compactness. Namely, we prove that
we can consider monotone nondecreasing competitors, as stated in the following result:

\begin{lemma}\label{competit} 
Assume that~\eqref{DOUBLEW} holds true.
Let $R>2r$ and~$v\in L^{\infty}(\R)$, with~$ v(x)=1$ for a.e. $x\geq R-r$
and~$v(x)=-1$ for a.e. $x\leq -R+r$. 

Then, there exists a monotone nondecreasing function $\tilde v$
such that $\tilde v=v$ in $(-\infty, -R+r)\cup (R-r, +\infty)$
and  $$\mathcal{E}_{(-R,R)} (\tilde v)\leq \mathcal{E}_{(-R,R)} (v).$$
\end{lemma}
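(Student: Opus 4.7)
My plan is to construct $\tilde v$ from $v$ by two successive energy-decreasing transformations that each preserve the boundary data and whose composition produces a monotone nondecreasing function: a pointwise truncation to $[-1, 1]$, followed by a level-set monotone rearrangement.

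First I would set $v_T(x) := \max\{-1, \min\{v(x), 1\}\}$. By \eqref{DOUBLEW}, $W$ is minimized at $\pm 1$ and monotone on $(-\infty, -1]$ and $[1, +\infty)$, so $W(v_T) \le W(v)$ pointwise. Since the projection onto $[-1,1]$ is $1$-Lipschitz, $\osc{I} v_T \le \osc{I} v$ for every interval $I$. Because $|v| = 1$ a.e.\ outside $(-R+r, R-r)$, the boundary data are preserved and $\mathcal{E}_{(-R,R)}(v_T) \le \mathcal{E}_{(-R,R)}(v)$.

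Next I would rearrange. Set $m(t) := |\{v_T > t\} \cap (-R+r, R-r)|$ for $t \in (-1, 1)$ and define the monotone nondecreasing $\tilde v$ by declaring $\{\tilde v > t\} := (R - r - m(t), +\infty)$ and pasting the values $\mp 1$ outside $(-R+r, R-r)$. By equidistribution, $\int_{-R}^{R} W(\tilde v)\, dx = \int_{-R}^{R} W(v_T)\, dx$. For the oscillation term I would use the layer-cake representation
\[
\osc{(x-r,x+r)} u \;=\; \int_{-1}^{1} \mathbf{1}\bigl\{\{u > t\} \cap (x-r, x+r) \ne \emptyset \ \text{and}\ \{u \le t\} \cap (x-r, x+r) \ne \emptyset\bigr\}\, dt,
\]
then square and apply Fubini to reduce the desired energy inequality to the pointwise estimate
\[
\bigl|N_r(\{v_T > s\}) \cap N_r(\{v_T \le t\})\bigr| \;\ge\; \bigl|N_r(\{\tilde v > s\}) \cap N_r(\{\tilde v \le t\})\bigr|, \qquad -1 \le t \le s \le 1,
\]
where $N_r(E) := E + (-r, r)$. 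For $\tilde v$ both sets are half-lines and the right-hand side equals $\max\{0,\, 2r - (m(t) - m(s))\}$. For $v_T$, setting $P := \{v_T > s\}$, $Q := \{v_T \le t\}$ and $M := \{v_T \in (t, s]\}$ (with $|M| = m(t) - m(s)$), I would exploit that $P$ contains a right half-line and $Q$ a left half-line: at least one connected component $J$ of $M$ has its two endpoints in $\bar P$ and $\bar Q$ respectively, and each such ``transitioning'' gap of length $\ell$ forces an interval of length $\max\{0, 2r - \ell\}$ to lie in $N_r(P) \cap N_r(Q)$. Summing these contributions and using $\sum \ell_j \le |M|$, while accounting for the (favorable) overlaps between nearby transitioning gaps, yields the required estimate.

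The main obstacle I expect is precisely this last pointwise rearrangement inequality: it plays the role of a P\'olya--Szeg\H o inequality for the nonlocal oscillation functional, but since the oscillation is nonconvex in $u$, the classical symmetrization machinery does not apply. The gap-counting argument has to be executed carefully: general measurable $P, Q$ can be handled by approximation by finite unions of intervals, but one must rigorously account for the case when consecutive transitioning gaps are within $2r$ of each other (so that their associated intervals in $N_r(P) \cap N_r(Q)$ may overlap) and verify that gaps whose endpoints are of the same type only strengthen the lower bound.
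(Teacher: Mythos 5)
Your route is genuinely different from the paper's. The paper never rearranges: after the same truncation step $v \mapsto \max\{-1,\min\{1,v\}\}$, it runs a cut-and-paste case analysis, repeatedly replacing $v$ by $\max\{v,A\}$ on a right half-line or by $\min\{v,B\}$ on a left half-line (using the identity $\osc_I u = \osc_I \min\{u,c\} + \osc_I \max\{u,c\}$ and the shape of $W$ to control the potential term), and iterating through finitely many level comparisons until the function is monotone. Your plan, truncation plus the monotone nondecreasing rearrangement, is cleaner and more structural, but it puts all the weight on a P\'olya--Szeg\H{o}-type inequality for the nonlocal oscillation term, which is exactly where your argument has a hole.

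The problematic step is the claim that ``at least one connected component $J$ of $M$ has its two endpoints in $\bar P$ and $\bar Q$ respectively.'' This is false. Take (after normalizing so that $R-r=2$ and $-R+r=-2$)
\[
P=(0,1)\cup(2,+\infty),\qquad Q=(-\infty,-2)\cup(-1,0)\cup(1,2),\qquad M=(-2,-1).
\]
This is a legitimate pair of level sets $P=\{v_T>s\}$, $Q=\{v_T\le t\}$ for a function $v_T$ with the prescribed boundary data, yet the unique component of $M$ has both endpoints in $\bar Q$, so there is no transitioning gap and your counting argument produces no lower bound at all. The rearrangement inequality
\[
\bigl|N_r(P)\cap N_r(Q)\bigr|\;\ge\;\max\bigl\{0,\,2r-|M|\bigr\}
\]
is nevertheless true, but it needs a different mechanism. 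One clean way: if $\bar P\cap\bar Q\ne\emptyset$ (as in the example above, where $\bar P\cap\bar Q=\{0,1,2\}$), pick $c\in\bar P\cap\bar Q$; then $(c-r,c+r)\subseteq N_r(P)\cap N_r(Q)$ and the bound holds with room to spare. If instead $\bar P\cap\bar Q=\emptyset$, apply the intermediate value theorem to the $2$-Lipschitz function $g(x):=\mathrm{dist}(x,P)-\mathrm{dist}(x,Q)$, which satisfies $g(R-r)<0<g(-R+r)$, to obtain $c$ with $\mathrm{dist}(c,P)=\mathrm{dist}(c,Q)=:d>0$; then $(c-d,c+d)\subseteq M$ gives $d\le |M|/2$, and the $1$-Lipschitz bound on $\max\{\mathrm{dist}(\cdot,P),\mathrm{dist}(\cdot,Q)\}$ gives $(c-(r-d),c+(r-d))\subseteq N_r(P)\cap N_r(Q)$ whenever $d<r$, hence $|N_r(P)\cap N_r(Q)|\ge 2(r-d)\ge 2r-|M|$. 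You would also need to phrase $\mathrm{dist}$ and $\bar P$, $\bar Q$ in terms of density points (as the paper's notational conventions do), since $P$ and $Q$ are only defined up to null sets. With the rearrangement inequality repaired in this way, the rest of your reduction (layer-cake for the oscillation, Fubini, equimeasurability for the potential) goes through and gives a correct, arguably tidier, proof of the lemma.
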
 

\begin{proof} 
We first prove that it is enough to consider competitors~$v$ taking
values in~$[-1,1]$. For this, we claim that
\begin{equation}\label{comp11}
\mathcal{E}_{(-R,R)}\big(\max\{-1, \min\{1, v\}\}\big)\leq \mathcal{E}_{(-R,R)}(v).
\end{equation}
To prove~\eqref{comp11}, we observe that, by definition, for every $c\in\R$ and  $x\in\R$,
\begin{equation}\label{taglio}
\osc{(x-r,x+r)} u= \osc{(x-r,x+r)} \min \{u,c\}+\osc{(x-r,x+r)} \max\{u,c\}.\end{equation} 
Then, given~$v$ as in the statement
of Lemma~\ref{competit}, by \eqref{taglio} we get that for all $x\in\R$ 
\begin{equation}\label{irgbfbvbv58u6}
\osc{(x-r,x+r)} \max\{-1, \min\{1, v\}\}\leq \osc{(x-r,x+r)} v.\end{equation}
Moreover, by the hypothesis on~$W$ in \eqref{DOUBLEW}, we have that
\begin{equation}\label{irgbfbvbv58u62}\int_{-R}^{R} W\big(
\max\{-1, \min\{1, v\}\}(x)\big)\,dx
\leq \int_{-R}^{R} W(v(x))\,dx.\end{equation}
Hence, \eqref{comp11} follows from~\eqref{CALE},
\eqref{irgbfbvbv58u6}, and~\eqref{irgbfbvbv58u62}.

Now, if~$v$ is monotone nondecreasing, the proof of Lemma~\ref{competit}
is completed by taking~$\tilde v:=v$.

Hence, we suppose that $v$ is
not monotone nondecreasing, and we provide a method to
modify~$v$ in~$[-R+r,R-r]$
in order to get a monotone nondecreasing function~$\tilde v$ with lower energy, as desired.

Since $v$ is not monotone nondecreasing,
there exist $a$, $b\in\R$ such that
\begin{equation}\label{aoboaobo2}
a<b,\qquad 
B:=\liminf_{x\to b}v(x)<\limsup_{x\to a}v(x)=:A
\quad \mbox{ and  }\quad
B\leq v(x)\leq A \quad
\mbox{ for a.e. $x\in [a,b]$}.\end{equation}
The idea is to consider all possible quadruples as in \eqref{aoboaobo2},
by substituting $v$ with
a function $\tilde v$ which coincides with~$v$ outside~$[-R+r, R-r]$ and 
has lower energy than $v$, and this will imply the thesis of Lemma~\ref{competit}.
The precise details go as follows.

If $A=1$, we define 
\begin{equation}\label{dprhrnhyi}
\tilde v(x):= \begin{cases} 
1=\max\{v(x), 1\} &{\mbox{ if }} a\leq x\leq R-r,\\
v(x) &{\mbox{ otherwise}}.
\end{cases} 
\end{equation}
Using \eqref{taglio} and the
first assumption on~$W$ in \eqref{DOUBLEW},
we conclude that $$\mathcal{E}_{(-R,R)}(\tilde v)\leq \mathcal{E}_{(-R,R)}(v).$$
Similarly, if $B=-1$, we define
\begin{equation}\label{ir7gfdh}
\tilde v(x):= \begin{cases} -1=\min\{v(x),-1\} &\quad \mbox{ if } -R+r\leq x< b,\\ 
v(x)  & \quad \mbox{ otherwise},\end{cases} \end{equation}
and we get that
$$\mathcal{E}_{(-R,R)}(\tilde v)\leq \mathcal{E}_{(-R,R)}(v).$$ 

As a consequence, from now on we assume that~$-1<B<A<1$.
We define 
\[B_0:=\inf_{x\geq a} \liminf_{y\to x} v(y).\]
Observe that  $B_0\leq B$ and $v(x)\geq B_0$ for all $x\geq a$.

If $B_0=B$, we set $b_0=b$. If $B_0<B$, then 
let $\eta_j$ such that $v(\eta_j)\to B_0$.
Then up to extracting a subsequence, we get that $\eta_j\to b_0$, for some~$b_0\in\R$.
In this case, due to~\eqref{aoboaobo2}, we have that~$b_0>b>a$.

We also notice that~$B_0>-1$, otherwise, if~$B_0=-1$, we argue as before,
defining~$\tilde v$ as in~\eqref{ir7gfdh} with~$b_0$ in place of~$b$, and obtaining that~$
\mathcal{E}_{(-R,R)}(\tilde v)\leq \mathcal{E}_{(-R,R)}(v)$. 

Now, if $A+B_0\geq 0$, then since $A>B_0$, by assumption \eqref{DOUBLEW}, we get that $W(t)\geq W(A)$ for all $t\in [B_0,A]$. We define
\[\tilde v(x)= \begin{cases}
\max\{v(x), A\} &{\mbox{ if }} a\leq x\leq R-r,\\
v(x) & {\mbox{ otherwise}},\\ 
\end{cases} 
\] and, recalling that $v(x)\geq B_0$ for all $x\geq a$, we conclude
that~$\mathcal{E}_{(-R,R)}(\tilde v)\leq \mathcal{E}_{(-R,R)}(v)$. 
 
Hence, we suppose that~$A+B_0<0$, and we define 
\[A_0:=\sup_{x\leq b_0} \limsup_{y\to x} v(y).\]
Observe that $A_0\geq A$, and $v(x)\leq A_0$ for all $x\leq b_0$.  
 
If $A_0=A$, since $A+B_0<0$, we have that $W(t)\geq W(B_0)$ for all $t\in [B_0,A]$.
So, we define 
\[\tilde v(x)= \begin{cases} \min\{v(x), B_0\} &{\mbox{ if }} -R+r\leq x<b_0,\\ 
v(x) &{\mbox{ otherwise}},\end{cases} 
\] and, recalling that $v(x)\leq A_0=A$ for all $x\leq b_0$, we conclude
that~$\mathcal{E}_{(-R,R)}(\tilde v)\leq \mathcal{E}_{(-R,R)}(v)$. 
 
Assume now that $A_0>A$. Let $\eta_j$ such that $v(\eta_j)\to A_0$.
Then up to extracting a subsequence, we get that $\eta_j\to a_0$, for some~$a_0<b_0$. 
 
We observe that if $A_0=1$, we argue as before, defining~$\tilde v= 1$ 
as in~\eqref{dprhrnhyi} with~$a_0$ in place of~$a$,
and we conclude that~$\mathcal{E}_{(-R,R)}(\tilde v)\leq \mathcal{E}_{(-R,R)}(v)$. 

Now we iterate this procedure, setting
\[B_1:=\inf_{x\geq a_0} \liminf_{y\to x} v(y),\]
and noticing that  $B_1\leq B_0$ and $v(x)\geq B_1$ for all $x\geq a_0$. 

If $B_0=B_1$, we consider two cases: either~$A_0+B_0\leq 0$ or~$A_0+B_0> 0$.
If $A_0+B_0\leq 0$, we define~$\tilde v$ as in~\eqref{ir7gfdh} with~$b_0$ in place of~$b$,
and we conclude again that~$\mathcal{E}_{(-R,R)}(\tilde v)\leq \mathcal{E}_{(-R,R)}(v)$.
If instead~$A_0+B_0> 0$, we set~$\tilde v$ as in~\eqref{dprhrnhyi} with~$a_0$ in place of~$a$,
obtaining that~$\mathcal{E}_{(-R,R)}(\tilde v)\leq \mathcal{E}_{(-R,R)}(v)$. 
 
So, we are left with the case $B_1<B_0$. The possibility that~$B_1=-1$ can be 
dealt with as before.
Hence, if~$B_1>-1$, we define $b_1$ such that 
$$\liminf_{y\to b_1} v(y)=B_1.$$ 
Observe that necessarily $a_0<b_1<a<b_0$. So if $x\geq a_0$ we have that~$v(x)\geq B_1$
and if $x\leq b_1$ we have that~$v(x)\geq A_1$. 
As above, we separate two cases, namely we consider the case in which~$A_0+B_1\leq 0$
and the one in which~$A_0+B_1> 0$. In the first case,
we define~$\tilde v$ 
as in~\eqref{ir7gfdh} with~$b_1$ in place of~$b$, while in the second case
we set~$\tilde v$
as in~\eqref{dprhrnhyi} with~$a_0$ in place of~$a$. In both cases, we obtain that~$
\mathcal{E}_{(-R,R)}(\tilde v)\leq \mathcal{E}_{(-R,R)}(v)$.

These observations took into account all possible cases given by~\eqref{aoboaobo2},
and so the proof of Lemma~\ref{competit} is complete.
\end{proof}

With the aid of Lemma \ref{competit}, we can prove existence of a solution to the minimization
problem in~\eqref{dirpb}.

\begin{proposition}\label{exdir}
For every $R>2r+1$,  there exists $u_R\in L^\infty(\R)$ solution of the minimization problem
in~\eqref{dirpb}. Moreover, $u_R$ is monotone nondecreasing.

In addition,
\begin{equation}\label{ENBOUND}
\mathcal{E}_{(-R, R)}(u_R)\leq  \min\left\{\frac{4}{r}, 4+c_W\right\}
\end{equation}
where $c_W$ is as in \eqref{cw}. 
\end{proposition}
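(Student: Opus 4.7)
The approach is the direct method of the calculus of variations, using Lemma~\ref{competit} to restore the compactness needed to pass to the limit. I first take a minimizing sequence $(v_n)_{n\in\N}$ for~\eqref{dirpb}; the truncation inequality~\eqref{comp11} lets me replace each $v_n$ by $\max\{-1,\min\{1,v_n\}\}$ without increasing the energy, and Lemma~\ref{competit} then lets me replace the resulting $v_n$ by a monotone nondecreasing competitor with the same boundary values and no larger energy. Hence I may assume the minimizing sequence consists of monotone nondecreasing functions $v_n\colon\R\to[-1,1]$ identically equal to $\pm 1$ outside $[-R+r,R-r]$.

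Since the $v_n$ are monotone and uniformly bounded, Helly's selection theorem produces a subsequence (not relabeled) and a monotone nondecreasing limit $u_R\colon\R\to[-1,1]$ such that $v_n\to u_R$ at every continuity point of $u_R$, hence a.e.\ on~$\R$. The boundary conditions pass to the limit, so $u_R$ is admissible, and its monotonicity gives the second claim. To show $u_R$ is a minimizer it suffices to prove that $\mathcal{E}_{(-R,R)}(v_n)\to\mathcal{E}_{(-R,R)}(u_R)$. The potential term converges by continuity of $W$ on $[-1,1]$ and dominated convergence. For the oscillation term I use the monotone identity $\osc{(x-r,x+r)} v_n = v_n((x+r)^-)-v_n(x-r)$ (after fixing right-continuous representatives): for a.e.\ $x$ both $x\pm r$ are continuity points of $u_R$, at which pointwise convergence is already known, and a squeeze $v_n(x+r-\delta)\le v_n((x+r)^-)\le v_n(x+r)$ together with continuity of $u_R$ at $x+r$ forces $v_n((x+r)^-)\to u_R(x+r)$. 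Since the integrand is uniformly bounded by $4$, bounded convergence concludes.

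For the upper bound~\eqref{ENBOUND} I test two explicit competitors. The jump $u_0$ of~\eqref{constant} is admissible (for $R\ge r$), and~\eqref{constant2} gives $\mathcal{E}_{(-R,R)}(u_0)=4/r$. The clipped identity $u_L(x):=\max\{-1,\min\{1,x\}\}$ is admissible as soon as $R-r\ge 1$, hence for $R>2r+1$; a direct calculation shows that $\osc{(x-r,x+r)} u_L$ equals $2r$ on $[-1+r,1-r]$, is affine on the two transition layers $[\pm 1-r,\pm 1+r]$, and vanishes elsewhere, which yields $\frac{1}{2r^2}\int_{-R}^{R}(\osc{(x-r,x+r)} u_L)^2\,dx=4-\frac{4r}{3}\le 4$, while the change of variables gives $\int_{-R}^{R} W(u_L)\,dx=c_W$. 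Hence $\mathcal{E}_{(-R,R)}(u_L)\le 4+c_W$, and~\eqref{ENBOUND} follows by minimality of $u_R$.

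The main obstacle is the continuity of the oscillation term along Helly-type limits: pointwise a.e.\ convergence does not in general control a supremum on an open interval, and left limits can genuinely misbehave at the jumps of $u_R$. Monotonicity bypasses this difficulty by reducing the sup and inf to evaluations at $x\pm r$, after which negligibility of the (countable) discontinuity set of $u_R$ closes the argument and makes the energy behave continuously, not merely lower semicontinuously, along the extracted subsequence.
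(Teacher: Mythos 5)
Your proof is correct and follows the same overall strategy as the paper: reduce via Lemma~\ref{competit} to monotone competitors, extract a limit by compactness of equibounded monotone functions, and test explicit competitors for the energy bound. The one genuine difference is how you close the convergence step. The paper extracts an $L^1$-convergent subsequence and then invokes \emph{lower semicontinuity} of the oscillation term with respect to $L^1$ convergence, citing an external reference. You instead exploit monotonicity to reduce $\osc{(x-r,x+r)} v_n$ to the endpoint expression $v_n((x+r)^-)-v_n(x-r)$, show that for a.e.\ $x$ both endpoints are continuity points of the Helly limit $u_R$, control the left limit by a squeeze against nearby continuity points, and conclude by bounded convergence. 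This proves actual \emph{convergence} of the energy, not just lower semicontinuity, and is self-contained — a small improvement, though it relies on the monotone reduction having already been made, so it does not replace Lemma~\ref{competit}. Your competitor $u_L(x)=\max\{-1,\min\{1,x\}\}$ is a translate of the paper's $\tilde v$ and gives the same value $4-\tfrac{4r}{3}+c_W$. As in the paper, the piecewise description of $\osc{(x-r,x+r)} u_L$ (and hence the bound by $4+c_W$) is only valid for $r\le 1$; for $r>1$ the bound $\min\{4/r,4+c_W\}=4/r$ already follows from the jump competitor alone, so this does not affect the conclusion, but it would be cleaner to say so explicitly rather than presenting the $u_L$ computation as unconditional.
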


\begin{proof} In light of Lemma \ref{competit},
the minimization problem in~\eqref{dirpb} is equivalent to the
following minimization problem 
\begin{equation}\label{fy6458hg}
\inf_{v\in \mathcal{M}_R} \mathcal{E}_{(-R, R)}(v),
\end{equation}
where  
\begin{equation}\begin{split}\label{emmerre}
\mathcal{M}_R :=\,&\big\{v\in L^\infty(\R) \mbox{ s.t. $v$ is monotone 
nondecreasing, }\\
&\quad  v(x)=1 {\mbox{ for a.e. }} x\geq R-r {\mbox{ and }}
v(x)=-1\ {\mbox{ for a.e. }} x\leq -R+r\big\}.\end{split}\end{equation}

We start proving \eqref{ENBOUND}. We consider the function~$v_{\pm 1}\in \mathcal{M}_R$
with~$v_{\pm1}:=\pm1$
in~$(-R+r,R-r)$. Then, in view of the
properties of~$W$ given in~\eqref{DOUBLEW}, 
$$ \mathcal{E}_{(-R, R)}(v_{\pm1})= 
\frac{4}{r},$$
and this implies that
\begin{equation}\label{ecostante}
\inf_{v\in \mathcal{M}_R} \mathcal{E}_{(-R, R)}(v)\leq \frac{4}{r}.\end{equation}
Furthermore, we let ~$\tilde v \in  \mathcal{M}_R$ such that 
\begin{equation}\label{vtilde}  \tilde v(x):= \begin{cases} 
 x+R-r-1 & {\mbox{ for any }} x\in [-R+r, 2-R+r],\\
1 & {\mbox{ for any }} x\in [ 2-R+r, R-r].
\end{cases}.\end{equation}
We note that~$2-R+r<R-r$, since $R>1+r$.
By \eqref{DOUBLEW} and~\eqref{cw},
we get that
\begin{equation}\label{2.13bis}
\int_{-R}^{R} W(\tilde v(x))\,dx= \int_{-R}^{2-R+r} W(x+R-r-1)\,dx
= \int_{-1}^1 W(s)\, ds=c_W.\end{equation}
Since  $r\leq 1$    (and so in particular $-R+2r\leq -R+2$) and $R>r+1$, we get 
 \begin{eqnarray}\label{contotilda}
 &&\frac{1}{2r^2} \int_{-R}^R \Big(\osc{(x-r,x+r)} \tilde v\Big)^2\,dx
 \\ 
&= &\frac{1}{2r^2}  \int_{-R}^{-R+2r} \big(\tilde v(x+r)+1\big)^2\, dx
+  \frac{1}{2r^2} \int_{-R+2r}^{-R+2} \big(\tilde v(x+r)-\tilde v(x-r)\big)^2 \,dx \nonumber \\
&&\qquad +  \frac{1}{2r^2} \int_{-R+2}^{-R+2+2r} \big(1- \tilde v(x-r)\big)^2\, dx \nonumber \\
&= &  \frac{1}{2r^2}  \int_{0}^{2r} x^2 \,dx+
\frac{1}{2r^2} \,4r^2(2-2r)+  \frac{1}{2r^2} \int_{0}^{2r}x^2\, dx\nonumber \\
&=&\frac{8}{3} r+ 4(1-r).\nonumber\end{eqnarray}
As a consequence of this and~\eqref{2.13bis},
$$ {\mathcal{E}}_{(-R,R)}(\tilde v)= \frac{8}{3} r+ 4(1-r)+ c_W\leq 4+c_W.$$
{F}rom this and~\eqref{ecostante}, we obtain~\eqref{ENBOUND}. 

We show now that a minimizer~$u_R$ does exist.
To this aim, we consider a minimizing sequence~$u_n\in \mathcal{M}_R$, and we have that~$u_n$
is uniformly bounded. Moreover, the sequence~$u_n$ has uniformly bounded variation
(since it consists of
equibounded monotone functions). By 
compact embeddings of~$BV(-R+r,R-r)$ in~$L^p(-R+r, R-r)$ for every $p\geq 1$
(see \cite[Corollary 3.49]{MR1857292}), we obtain that, up to a subsequence,
$u_n\to u_R$
pointwise and in $L^1(-R+r,R-r)$, as~$n\to+\infty$, for some~$u_R\in\mathcal{M}_R$.

Now, by the lower semicontinuity of
the oscillation functional with respect to $L^1$ convergence (see~\cite{cdnv}),
and the continuity of the potential term of the energy with respect to pointwise
convergence,
we conclude that~$u_R$ is a solution to~\eqref{fy6458hg}, and therefore to~\eqref{dirpb}.
\end{proof} 

Now we are in the position of completing the proof of Theorem~\ref{exthm}.

\begin{proof}[Proof of Theorem \ref{exthm}]
For any~$R>2r+1$, 
we consider the solution $u_R$ of~\eqref{dirpb} constructed in Proposition \ref{exdir}.
We recall the notation in~\eqref{emmerre}, and we
observe that~$u_{R}\in \mathcal{M}_{R'}$, for any~$R'>R$.
Hence, we obtain that
\begin{equation}\label{mongiu} 
e_R:= \mathcal{E}_{(-R, R)}(u_R)=\mathcal{E}_{(-R', R')}(u_R)\geq \mathcal{E}_{(-R', R')}
(u_{R'})=:e_{R'}.\end{equation} 
Therefore, recalling also the uniform bound in~\eqref{ENBOUND},
we conclude that
\begin{equation}\label{limer}
\lim_{R\to+\infty} e_R=\inf_{R>2r} e_R=:e\in\left[ 0, \min\left\{\frac{4}{r},4+c_W\right\}\right]. 
\end{equation}

Now, up to a translation, we can assume that, for all~$R>2r+1$,
$u_R(x)<0$ for any~$x<0$ and $u_R(x)\geq 0$ for any~$x\geq 0$.
Moreover, 
we observe that the sequence~$u_R$ is equibounded,
and has equibounded total variation, since the functions~$u_R$ are all monotone.
Thus, by compactness theorem
(see \cite[Corollary 3.49]{MR1857292}) we get that, up to extracting a subsequence,
$u_R\to u$ pointwise and locally in $L^p$ for every $p\geq 1$, as~$R\to+\infty$. 
We point out that
\begin{equation}\label{SODDI}
{\mbox{the limit function~$u\in L^\infty(\R)$, with~$|u|\le1$,
it is monotone nondecreasing and satisfies~\eqref{LIMITS}.}} 
\end{equation}

Now, we set 
$$ \mathcal{E}(v):=\frac1{2r^2}\int_\R \left(\osc{(x-r,x+r)} v\right)^2\,dx+
\int_{\R} W(v(x))\,dx,$$
we recall~\eqref{limer} and we claim that 
\begin{equation}\label{cen}
e=\mathcal{E}(u).\end{equation}
For this, we fix $M>0$. Then,
by the lower semicontinuity of the oscillation part of the functional with respect
to $L^1$ convergence and 
the continuity of the potential part with respect to the pointwise convergence, we get 
\begin{eqnarray*}
0&\leq& {\mathcal{E}}_{(-M,M)} (u)\\
&=&
\frac1{2r^2}\int_{-M}^M \left(\osc{(x-r,x+r)} u\right)^2\, dx+\int_{-M}^{M} W(u(x))\,dx\\
&\leq& \liminf_{R\to +\infty} \left[
\frac1{2r^2}\int_{-M}^M \left(\osc{(x-r,x+r)} u_R\right)^2\, dx
+\int_{-M}^{M} W(u_R(x))\,dx\right]\\
&\leq  & \liminf_{R\to +\infty}\left[
\frac1{2r^2}\int_{\R} \left(\osc{(x-r,x+r)} u_R\right)^2+\,
dx+\int_{\R} W(u_R(x))\,dx\right]\\
&=& \lim_{R\to +\infty} e_R= e, \end{eqnarray*}
where we used the notation in~\eqref{mongiu}.
Consequently, since ${\mathcal{E}}_{(-M,M)} (u)$ is monotone nondecreasing in~$M$, 
sending~$M\to+\infty$, we conclude that
\begin{equation}\label{SODDI2}
0\leq \mathcal{E}(u) \leq e. \end{equation}

Now, for any~$v\in L^\infty(\R)$ such that~$v$ is monotone nondecreasing and
\begin{equation}\label{LIMITSV}
-1\leq v\leq 1,\qquad
\lim_{x\to\pm\infty}v(x)=\pm1,\quad 
{\mbox{and }}\quad \mathcal{E}(v)<+\infty.\end{equation}
and 
for any~$M>0$, we define the function
\begin{equation}\label{tagliov} v^M(x)= \begin{cases} v(x), &{\mbox{ if }}
x\in  (-M+r,M-r),\\ 1, & {\mbox{ if }} x\geq M-r,\\
-1, & {\mbox{ if }} x\leq -M+r.\end{cases} \end{equation}
We claim that
\begin{equation}\label{eruiher0579}
\lim_{M\to+\infty} \mathcal{E}(v^M)=\mathcal{E}(v).
\end{equation}
For this, we fix~$\eps>0$ and we take~$M$ sufficiently large such that
$$ |v(x)- 1|+|v(y)+1|\le \eps, \quad {\mbox{ for any }} x\in [M-3r,+\infty)
\quad {\mbox{ and for any }}y\in(-\infty,-M+3r],$$
in light of~\eqref{LIMITSV}. 
This gives that, for any~$x\in(-\infty,-M+2r]\cup [M-2r,+\infty)$,
$$ \osc{(x-r,x+r)} v\le 2\eps,$$
and so, for any~$x\in(-\infty,-M+2r]\cup [M-2r,+\infty)$,
\begin{equation}\label{06y6jjh}
\osc{(x-r,x+r)} v^M\le 2\eps.\end{equation}
Also,
\begin{eqnarray*}
&& \frac1{2r^2}\int_\R \left(\osc{(x-r,x+r)} v^M\right)^2\,dx\\
&=&
\frac1{2r^2}\int_{-M+2r}^{M-2r} \left(\osc{(x-r,x+r)} v^M\right)^2\,dx+
\frac1{2r^2}\int_{M-2r}^{+\infty} \left(\osc{(x-r,x+r)} v^M\right)^2\,dx
+\frac1{2r^2}\int_{-\infty}^{-M+2r} \left(\osc{(x-r,x+r)} v^M\right)^2\,dx\\
&=&
\frac1{2r^2}\int_{-M+2r}^{M-2r} \left(\osc{(x-r,x+r)} v\right)^2\,dx+
\frac1{2r^2}\int_{M-2r}^{M} \left(\osc{(x-r,x+r)} v^M\right)^2\,dx
+\frac1{2r^2}\int_{-M}^{-M+2r} \left(\osc{(x-r,x+r)} v^M\right)^2\,dx.
\end{eqnarray*}
Therefore, using~\eqref{06y6jjh},
\begin{equation*}\begin{split}
& \left|\frac1{2r^2}\int_\R \left(\osc{(x-r,x+r)} v^M\right)^2\,dx
-\frac1{2r^2}\int_\R \left(\osc{(x-r,x+r)} v\right)^2\,dx\right|\\
\le\;&
\left|\frac1{2r^2}\int_{-M+2r}^{M-2r} \left(\osc{(x-r,x+r)} v\right)^2\,dx
-\frac1{2r^2}\int_\R \left(\osc{(x-r,x+r)} v\right)^2\,dx\right|+
\frac{8\epsilon^2}{r}.
\end{split}\end{equation*}
As a consequence,
\begin{eqnarray*}
&& \left|\mathcal{E}(v^M)-\mathcal{E}(v)\right|\\
&\le& 
\left|\frac1{2r^2}\int_{-M+r}^{M-r} \left(\osc{(x-r,x+r)} v\right)^2\,dx
-\frac1{2r^2}\int_\R \left(\osc{(x-r,x+r)} v\right)^2\,dx\right|+
\frac{8\epsilon^2}{r}\\
&&\qquad + \left|\int_{\R} W(v^M(x))\,dx -
\int_{\R} W(v(x))\,dx\right|\\
&\le& 
\left|\frac1{2r^2}\int_{-M+2r}^{M-2r} \left(\osc{(x-r,x+r)} v\right)^2\,dx
-\frac1{2r^2}\int_\R \left(\osc{(x-r,x+r)} v\right)^2\,dx\right|+
\frac{8\epsilon^2}{r}\\
&&\qquad +\left|\int_{-M}^{M} W(v^M(x))\,dx -
\int_{\R} W(v(x))\,dx\right|,
\end{eqnarray*}
which implies the desired result in~\eqref{eruiher0579}
sending~$\eps\to0$ and~$M\to+\infty$.

{F}rom~\eqref{eruiher0579}, we have that
for any~$\eps>0$ there exists~$M(\eps, v)$ such that 
\begin{equation}\label{stimavM} \mathcal{E}(v)\geq \mathcal{E}(v^M)-\eps \qquad {\mbox{ for all }}
M\geq M(\eps, v).\end{equation}
We also observe that $v^M\in \mathcal{M}_M$, and therefore, by the minimality of $u_M$
and using~\eqref{stimavM},
\eqref{mongiu} and~\eqref{limer},
we obtain that
\[\mathcal{E}(v)\geq \mathcal{E}(v^M)-\eps\geq \mathcal{E}_{(-M,M)}(v^M)-\eps\geq  \mathcal{E}_{(-M,M)}(u_M)-\eps=e_M-\eps\geq e-\eps.\]  
By the arbitrariness of $\eps$, we conclude that~\begin{equation}\label{vm} \mathcal{E}(v)\geq e.\end{equation}

Now we observe that~$u$ satisfies~\eqref{LIMITSV}, in view of~\eqref{SODDI} and~\eqref{SODDI2},
and so we can take~$v:=u$, obtaining that~$\mathcal{E}(u)\geq e$.
This and~\eqref{SODDI2} give
the claim in~\eqref{cen}. 
Moreover, \eqref{vm} and \eqref{cen} imply directly \eqref{minmonotono}. 

In order to complete the proof of Theorem~\ref{exthm}, it only remains to show that
\begin{equation}\label{LOCAM}
{\mbox{$u$ is a local minimizer for the functional in~\eqref{CALE}.}}\end{equation}
To this end, we argue towards a contradiction, assuming that there exist~$M_0>0$,
a function~$v\in L^{\infty}(\R)$ such that
\begin{equation}\label{96u7jgnbvfbfe}
{\mbox{$v=u$ outside~$[-M_0+r,M_0-r]$,}}\end{equation} and~$\eps>0$
such that
\begin{equation}\label{contr1} \mathcal{E}_{(-M_0, M_0)}(v)\leq \mathcal{E}_{(-M_0, M_0)}(u)-
2\eps.\end{equation}
Also, recalling \eqref{limer} and~\eqref{cen}, and using~\eqref{stimavM}
(with~$v:=u$ and~$v^M:=u^M$ defined in \eqref{tagliov}), we get that there exists  $M_1>2r$ such that for all $M\geq M_1$,  
\begin{equation}\label{contr2}
e_M\geq e=\mathcal{E}(u)\geq \mathcal{E}(u^M)-\eps.\end{equation}
Now we take~$M>\max\{M_1,M_0+r\}$ and we consider~$v^{M}$ as given in \eqref{tagliov}.
Then, we get from~\eqref{96u7jgnbvfbfe} and~\eqref{contr1} that
\[\mathcal{E}_{(-M, M)}(v^{M})\leq \mathcal{E}_{(-M, M)}(u^{M})-2\eps.\] 
Consequently, recalling also~\eqref{contr2} and the notation in~\eqref{mongiu}, we conclude that
\begin{equation}\label{irhbjf7734gfybf}
\mathcal{E}_{(-M, M)}(v^{M})\leq \mathcal{E}_{(-M, M)}(u^{M})-
2\eps\leq\mathcal{E}(u^M)-2\eps \leq  \mathcal{E}(u)-\eps\leq e_M-\eps= 
\mathcal{E}_{(-M,M)}(u_M)-\eps. \end{equation}
Now from Lemma~\ref{competit} we know that there exists a monotone
nondecreasing function~$\tilde{v}^M$ such that~$\tilde{v}^M=v^M$
in~$(-\infty, -M+r)\cup(M-r,+\infty)$ such that
$$ {\mathcal{E}}_{(-M,M)}(\tilde{v}^M)\le 
{\mathcal{E}}_{(-M,M)}({v}^M).$$
Furthermore, the function~$\tilde{v}^M$ belongs to~$\mathcal{M}_M$
(recall the definition of this space in~\eqref{emmerre}).
As a consequence of this observation and of~\eqref{irhbjf7734gfybf}, we find that
$${\mathcal{E}}_{(-M,M)}(\tilde{v}^M)\le 
\mathcal{E}_{(-M,M)}(u_M)-\eps,$$
which is in contradiction with the minimality of $u_M$.
This conclude the proof of Theorem~\ref{exthm}.
\end{proof} 

We conclude the section proving Proposition \ref{propminimi}.

\begin{proof}[Proof of Proposition \ref{propminimi}]
In light of~\eqref{constant2},
to prove the statement,
it is sufficient to construct
a function~$v:\R\to \R$, which is monotone nondecreasing,
satisfies~\eqref{LIMITS} and such that
$$\mathcal{E}(v)<\frac{4}{r}=\mathcal{E}(u_0).$$ 
To this end, we observe that, if $r<\frac{4}{4+c_W}$, then
$$ 4+c_W<\frac{4}{r},$$
and therefore the first case in the statement is a consequence of \eqref{minmonotono}.

Hence we now focus on the case in which~$W$ is differentiable in $\pm 1$.
For any~$\epsilon>0$, we consider the function 
\[v_\eps(x):= \begin{cases} -1 & {\mbox{ if }}x<0,\\ 
1-\eps  &{\mbox{ if }} 0<x<2r, \\ 1 &{\mbox{ if }}x>2r.\end{cases} \]
Then
\[\mathcal{E}(v_\eps)= \int_{-r}^r \frac{(2-\eps)^2}{2r^2}\, ds
+\int_{r}^{3r} \frac{\eps^2}{2r^2}\,ds
+\int_0^{2r} W(1-\eps)\,
ds= \frac{4}{r}+\eps\left(\frac{2\eps}{r}-\frac{4}{r}+2r\frac{W(1-\eps)}{\eps}\right).\]
Since $W$ is differentiable in $1$, recalling that $W(1)=0=W'(1)$ (thanks to~\eqref{DOUBLEW}), 
we get that \[\lim_{\eps\to 0}\left(\frac{2\eps}{r}-\frac{4}{r}+2r\frac{W(1-\eps)-W(1)}{\eps}\right)
= -\frac{4}{r}<0,\]
and therefore
there exists $\eps_0=\eps_0(r)$ such that for all $0<\eps<\eps_0$
we get that
\[ \mathcal{E}(v_\eps)=\frac{4}{r}+\eps\left(\frac{2\eps}{r}
-\frac{4}{r}+2r\frac{W(1-\eps)-W(1)}{\eps}\right)<\frac{4}{r}=\mathcal{E}(u_0).\]
This completes the proof of Proposition \ref{propminimi}.
\end{proof}

\section{Rigidity results for minimal heteroclinic connections,
and proof of Theorem~\ref{MONOTH}}\label{secmon}

We divide the proof of Theorem~\ref{MONOTH} in several steps.
{F}rom now on, we assume that~$u$ is as in the statement of
Theorem~\ref{MONOTH}.
\medskip 

\noindent{\bf Step 1: bounds on $u$, namely  $|u|\le1$.}

We fix~$\delta>0$
and we show that
\begin{equation}\label{8i819igfdh-293-1234985}
u\le 1+\delta\qquad{\mbox{(up to null measure sets).}}\end{equation}
To this end, we argue by contradiction and assume, for instance,
that the set~$\{u>1+\delta\}$ has positive measure.
Let~$v:=\min \{u,1+\delta\}$. By~\eqref{LIMITS}, we know that
there exist~$\alpha_0$, $\beta_0\in\R$ such
that~$u\le0$ in~$(-\infty,\alpha_0]$
and~$|u-1|\le\frac\delta2$ in~$[\beta_0,+\infty)$.
In particular, if~$\alpha:=\alpha_0-r$ and~$x\in(-\infty,\alpha+r)$, we have
that~$u(x)\le0$ and so~$u(x)=v(x)$. Also, if~$\beta:=\beta_0+r$
and~$x\in(\beta-r,+\infty)$, then~$u(x)\le 1+\frac\delta2$
and so~$u(x)=v(x)$. These considerations give that
\begin{equation}\label{tryeia96576y}
{\mbox{$u=v$
outside~$[\alpha+r,\beta-r]$}}\end{equation}
and so, by minimality,
\begin{equation}\label{MI0NU0NI02}
\begin{split}
0\,&\le
{\mathcal{E}}_{(\alpha,\beta)}(v)-{\mathcal{E}}_{(\alpha,\beta)}(u)
\\&=\frac1{2r^2}\int_\alpha^\beta \left(\osc{(x-r,x+r)} v\right)^2\,dx
-\frac1{2r^2}\int_\alpha^\beta \left(\osc{(x-r,x+r)} u\right)^2\,dx\\
&\qquad \qquad +\int_{(\alpha+r,\beta-r)\cap\{u>1+\delta\}} \Big( W(1+\delta)-W(u(x))\Big)\,dx.
\end{split}
\end{equation}
We also remark that, by \eqref{taglio},  for any~$x\in\R$,
\begin{equation}\label{DEC-o}
\osc{(x-r,x+r)} v\le\osc{(x-r,x+r)} u.
\end{equation}
Then, by~\eqref{MI0NU0NI02} and~\eqref{DEC-o},
$$ 0\le\int_{(\alpha+r,\beta-r)\cap\{u>1+\delta\}} \Big( W(1+\delta)-W(u(x))\Big)\,dx.$$
Recalling the assumptions on $W$ in~\eqref{DOUBLEW},
we conclude that~$(\alpha+r,\beta-r)\cap\{u>1+\delta\}$
must have zero Lebesgue measure.
Also, by~\eqref{tryeia96576y}, we have that~$u=v\le1-\delta$
outside~$[\alpha+r,\beta-r]$.
We thereby obtain that~$\{u>1+\delta\}$ has zero Lebesgue measure,
which proves~\eqref{8i819igfdh-293-1234985}.
Then, since~$\delta$ can be taken arbitrarily close to zero
in~\eqref{8i819igfdh-293-1234985}, we infer that~$u\le1$.

In a similar manner, one shows that~$u\ge-1$, and then
the claim follows, as desired.

\medskip

\noindent{\bf Step 2:  $u$ has finite global   energy.}

Namely, we show here that \begin{equation} \label{E BOUND}
 \mathcal{E}(u):=\frac1{2r^2}\int_\R \left(\osc{(x-r,x+r)} u\right)^2\,dx+\int_{\R} W(u(x))\,dx<+\infty.\end{equation}
For this, we fix~$R\geq 2(r+1)$ and let~$\xi_R\in C^\infty(\R,[0,1])$,
with~$\xi_R=1$ in~$[-R+1,R-1]$, $\xi_R=0$ in~$(-\infty, -R]\cup[R, +\infty)$ and~$|\xi_R'|\le 4$.
Let~$u_R:=\xi_R+(1-\xi_R) u$. Notice that~$u_R=u$ outside~$[-R,R]$,
and so the minimality of~$u$ gives that
$$ {\mathcal{E}}_{(-R-r,R+r)}(u)\le {\mathcal{E}}_{(-R-r,R+r)}(u_R),
$$
namely
\begin{equation}\label{bgby8}
\begin{split}&
\frac1{2r^2}\int_{-R-r}^{R+r} \left(\osc{(x-r,x+r)} u\right)^2\,dx
+\int_{-R-2r}^{R+r} W(u(x))\,dx \\
\leq\; &
\frac1{2r^2}\int_{-R-r}^{R+r} 
\left(\osc{(x-r,x+r)} u_R\right)^2 \,dx+\int_{-R-r}^{R+r}  W(u_R(x)) \,dx.
\end{split}\end{equation}
Now, if~$x\in ( -R+1+r,R-1-r)$, we have that~$(x-r,x+r)\subseteq( -R+1,R-1)$,
where~$\xi_R=1$ and so~$u_R=1$. Consequently,
\begin{equation}\label{MES}
{\mbox{for any $x\in ( -R+1+r,R-1-r)$, we have that }}\osc{(x-r,x+r)} u_R=0.
\end{equation}
On the other hand, by {\bf Step 1} and the definition of $u_R$, 
we have that~$|u_R|\leq 1$, and therefore
$$  \osc{(x-r,x+r)} u_R\le 2.$$
Combining this and~\eqref{MES}, we deduce that
\begin{equation}\label{bgby7}
\frac1{2r^2}\int_{-R-r}^{R+r} 
\left(\osc{(x-r,x+r)} u_R\right)^2\,dx\le
\frac1{2r^2}\int_{\{|x|\in(R-1-r,R+r)\}} 
\left(\osc{(x-r,x+r)} u_R\right)^2\,dx\le
\frac{4(1+2r)}{r^2}.
\end{equation}
Similarly, since~$u_R=1$ in~$( -R+1,R-1)$,
$$ \int_{-R-r}^{R+r} W(u_R(x))\,dx=
\int_{\{|x|\in(R-1,R+r)\}} W(u_R(x))\,dx\le 2(1+r)\|W\|_{L^\infty([-1,1])}.$$
Then, we plug this information and~\eqref{bgby7} into~\eqref{bgby8}
and we conclude that
\[ \frac1{2r^2}\int_{-R-r}^{R+r} \left(\osc{(x-r,x+r)} u\right)^2\,dx
+\int_{-R-r}^{R+r} W(u(x))\,dx\le\frac{4(1+2r)}{2r^2}+2(r+1)\|W\|_{L^\infty([-1,1])}.\]

By taking~$R$ as large as we wish, one deduces~\eqref{E BOUND},
as desired.

\medskip 

\noindent{\bf Step 3: monotonicity of $u$.} 

We suppose, by contradiction, that $u$ is not monotone, and so in
particular there exist $a$, $b\in\R$ Lebesgue points for~$u$ such that
\begin{equation}\label{aoboaobo}
a<b\qquad \mbox{ and }\qquad 
-1<B=\liminf_{x\to b}u(x)<\limsup_{x\to a}u(x)=A<1.\end{equation}
Our aim is to show that  quadruples $(a,b,A,B)$ which satisfy \eqref{aoboaobo} cannot exist, due to minimality of $u$. The argument used here is very similar to the one used in the proof of Lemma~\ref{competit}. 

First of all we claim the following: if quadruples  $(a,b,A,B)$ as in \eqref{aoboaobo} do exist, then 
\begin{equation}\label{claim1}  
\mbox{there exists at least one quadruple $(a,b,A,B)$ as in \eqref{aoboaobo} such that $B<0$.}\end{equation}
By contradiction, if it were not the case, we let $(a,b,A,B)$ any quadruple such that \eqref{aoboaobo} holds, with
\begin{equation}\label{abuno}
1>A>B\geq 0.\end{equation} 
In particular, since \eqref{claim1} does not hold,
necessarily
\begin{equation}\label{freytu54hbjfdb}
{\mbox{$u(x)\geq 0$ for almost every $x\in [a, +\infty)$.}}
\end{equation}
We also notice that, in light of~\eqref{abuno}, and recalling the assumptions in~\eqref{DOUBLEW},
we get that $W(t)>W(A)$ for every~$t\in [0,A)$. 
Now, we define the function 
\begin{equation}\label{tagliomax} v(x):=\begin{cases}  u(x) &{\mbox{ if }} x\leq a,\\ 
\max\{u(x), A\}& {\mbox{ if }}x>a. \end{cases} \end{equation}
Since~$A<1$ and~\eqref{LIMITS} holds true, we see that
there exists $c>a$ such that $v(x)=u(x)$ on $[c, +\infty)$.
Also, due to \eqref{taglio}, we get that 
$$\osc{(x-r,x+r)} v\leq \osc{(x-r,x+r)} u$$ for all $x$.
Moreover, thanks to~\eqref{freytu54hbjfdb}, we have that~$0\le u(x)\le v(x)=A<1$ for any~$x\in(a,c)$. Hence, by~\eqref{DOUBLEW},
we obtain that~$W(v(x))\leq W(u(x))$ for any~$x\in(a,c)$,
with strict inequality on the set
$$\{x\in (a,c) \;{\mbox{ s.t. }}\; u(x)<A\}$$
which has positive measure.

Collecting all these pieces of information, we conclude that
$$ {\mathcal{E}}_{(a-r,c+r)}(v)<{\mathcal{E}}_{(a-r,c+r)}(u),$$
and~$v$ is a competitor for~$u$ in~$(a-r,c+r)$, since~$v=u$
on~$(-\infty, a]\cup [c, +\infty)$.
This is in contradiction with the
minimality of $u$, and therefore~\eqref{claim1} is established. 
 
As a consequence, we fix now a quadruple $(a,b_0,A,B_0)$ as in \eqref{claim1}, with $-1<B_0<0$, and we define 
\begin{equation}\label{defa0} A_0:=\sup_{x\leq b_0}\limsup_{y\to x} u(y).\end{equation} 
Then~$A_0\geq A>B_0$.
By definition of $A_0$ there  exists a sequence~$
\eta_{j}\in(-\infty,b_0]$ with
\[
{\mbox{$u(\eta_{j})\to A_0$ as~$j\to+\infty$.}}\]
We observe that, since~$A_0>-1$ and~\eqref{LIMITS} holds true,
the sequence
$\eta_{j}$ is uniformly bounded, otherwise, passing to a subsequence, we would have $\eta_j\to -\infty$ and $u(\eta_j)\to -1\not =A_0$. 
So, passing to a subsequence, we define 
\begin{equation}\label{defa00}
a_0:=\lim_j \eta_j<b_0.\end{equation}

We claim that
\begin{equation}\label{claim2} A_0<1.\end{equation}
Not to interrupt this calculation, we postpone the proof of this claim, which is quite long, to {\bf Step 4}. 
 
We prove now that 
\begin{equation}\label{claim3} A_0+ B_0>0.\end{equation}
Assume on the contrary that $A_0+B_0\leq 0$.  If this is true, since $-1<B_0\leq 0$ and $B_0<A_0<1$, due to assumption \eqref{DOUBLEW}, we get that 
$W(t)>W(B_0)$ for all $t\in (B_0, A_0)$. We define the function 
\begin{equation}\label{tagliomin} v(x)=\begin{cases}  u(x) & {\mbox{ if }}
x\geq b_0,\\ 
\min\{u(x), B_0\}& {\mbox{ if }} x<b_0. \end{cases} \end{equation}
We observe that, since~$B_0>-1$ and~\eqref{LIMITS} holds true,
there exists $c_0<0$ such that~$v(x)=u(x)$ on~$(-\infty, c_0]$. 
Moreover, by the definition of $A_0$ in~\eqref{defa0}, we get that~$u(x)\leq A_0$ for almost every  $x\leq b_0$. Therefore,
as shown before, we get that $W(v(x))\leq W(u(x))$
for any~$x\in(c_0, b_0)$, with strict inequality on the set
$$\{x\in (c_0, b_0)\;{\mbox{ s.t. }}\; B_0< u(x)<A_0\}$$
which has positive measure. 
Therefore $v=u$ on $(-\infty, c_0]\cup [b_0, +\infty)$ and has strictly less potential energy in $(c_0-r,b_0+r)$.
These observations contradict the minimality of $u$, and so~\eqref{claim3} holds true.

We define now 
\begin{equation}\label{defb1} B_1:=\inf_{x\geq a_0}\liminf_{y\to x} u(y)\leq B_0.\end{equation} 
We show that 
\begin{equation}\label{claim5} B_1<B_0.\end{equation} Indeed, if this were not the case, then, in light of~\eqref{defb1}, we have that $B_1=B_0$ and $u(x)\geq B_0$ for almost every $x\geq a_0$. We note that
since $A_0+B_0>0$ by \eqref{claim3}, and~$B_0<0$, then $W(t)>W(A_0)$ for every~$t\in [B_0, A_0)$. 
We define the function $v$ as in \eqref{tagliomax} with $A_0$ in place of $A$ and $a_0$ in place of $a$. 
Again, since~$A_0<1$ by \eqref{claim2} and~\eqref{LIMITS} holds true,
we have that there exists $c_1>a_0$ such
that~$v(x)=u(x)$ on $[c_1, +\infty)$.
Moreover, due to \eqref{taglio}, we get that
$$\osc{(x-r,x+r)} v\leq \osc{(x-r,x+r)} u$$ for all $x$,
and, as shown before, we see that~$W(v(x))\leq W(u(x))$
with strict inequality on the set
$$\{x\in (a,c)\, {\mbox{ s.t. }}\, u(x)<A_0\}$$
which has positive measure.
Therefore $v=u$ on $(-\infty, a_0]\cup [c_1, +\infty)$ and has strictly less potential energy in $(a-r,c+r)$:
by the minimality of $u$, we find that necessarily $u=v$.
Therefore~\eqref{claim5} holds true. 

Now, by the definition of $B_1$ in~\eqref{defb1},
there exists a sequence~$
\zeta_{j}\in[a_0, +\infty)$ with
\[
{\mbox{$u(\zeta_{j})\to B_1$ as~$j\to+\infty$.}}\]
We observe that, due to the fact that \eqref{LIMITS} holds and $B_1<1$,  the sequence
{$\zeta_{j}$ is uniformly bounded, and  passing to a subsequence, we define 
\[b_1:=\lim_j \zeta_j>a_0.\]   
Following the same arguments as  for the proof of \eqref{claim2}, we can prove that 
\begin{equation}\label{claim4} B_1>-1.\end{equation}  
See {\bf Step 5} for a brief sketch of this. 
 
Next we observe that \begin{equation}\label{claim6} A_0+B_1<0.
\end{equation} 
Indeed, if this were not true, we can argue as in the 
the proof of claim \eqref{claim5}, define the function $v$ as in \eqref{tagliomax} with $A_0$ in place of $A$ and $a_0$ in place of $a$, and show that $u=v$ outside a compact interval and moreover that $v$ has strictly less energy of $u$, since $u(x)\geq B_1$ for almost every $x\geq a_0$, in contradiction with the minimality of~$u$. 

Then, we claim that 
\begin{equation}\label{claim7} b_1>b_0. 
\end{equation} 
Indeed, if this were not the case, then $a_0<b_1<b_0$,
and in particular $u(x)\leq A_0$ for every $x\leq b_1$, and~$A_0+B_1<0$, by \eqref{claim6}.  Hence, we can proceed as in 
the proof of claim \eqref{claim3}. That is, briefly,
we define the function $v$ as in \eqref{tagliomin} with $B_1$ in place of $B_0$ and $b_1$ in place of $b_0$, we show that $u=v$ outside a compact interval and finally we prove that $v$ has strictly less energy of $u$, since $u(x)\leq A_0$ for almost every $x\leq b_1$, in contradiction with the minimality of~$u$.
 
Now we define  
\begin{equation}\label{defa1} A_1:=\sup_{x\leq b_1}\limsup_{y\to x} u(y).\end{equation} 
Then $A_1\geq A_0>B_1$. Also, we observe that $A_1>A_0$, otherwise we could repeat exactly the same proof of claim \eqref{claim7} and obtain a contradiction to the minimality of $u$.
Moreover, we see that~$A_1<1$ by using the same argument as for \eqref{claim2}, see {\bf Step 4}. 

By definition of $A_1$ in~\eqref{defa1},
there  exists a sequence~$
\eta_{j}\in(-\infty,b_1]$ with
\[
{\mbox{$u(\eta_{j})\to A_1$ as~$j\to+\infty$.}}\]
Up to passing to a subsequence we define \[a_1:=\lim_j \eta_j.\] 
Since $A_1>A_0$ and $u(x)\leq A_0$ for almost every  $x\leq b_0$, necessarily $ a_0<b_0<a_1<b_1$.  Moreover $u(x)\geq B_1$ for almost every $x\geq a_1$ and 
$u(x)\leq A_1$ for almost every $x\leq b_1$. 

We observe that two possibilities may arise: either $A_1+B_1<0$ or $A_1+B_1\geq 0$. We will show that both of them are in contradiction with the minimality of $u$ and then this implies that 
quadruples as in \eqref{aoboaobo} cannot exist, and then finally that $u$ is monotone. 

If $A_1+B_1\geq 0$,  we argue as in the 
the proof of claim \eqref{claim5}, namely we define the function $v$ as in \eqref{tagliomax} with $A_1$ in place of $A$ and $a_1$ in place of $a$, and we show that $u=v$ outside a compact interval and moreover that $v$ has strictly less energy of $u$, since $u(x)\geq B_1$ for almost every $x\geq a_1$, in contradiction to the minimality of~$u$.

If instead~$A_1+B_1<0$, we can proceed as in 
the proof of claim \eqref{claim3}: we  define the function $v$ as in \eqref{tagliomin} with $B_1$ in place of $B_0$ and $b_1$ in place of $b_0$, we show that $u=v$ outside a compact interval and finally we prove that $v$ has strictly less energy of $u$, since $u(x)\leq A_1$ for almost every $x\leq b_1$, in contradiction to the minimality of~$u$.

These observations imply that~$u$ is monotone, and thus the claim 
in {\bf{Step 3}} is established.
\medskip 

\noindent{\bf Step 4: proof of claim \eqref{claim2}.}

We argue towards a contradiction, assuming that~$A_0=1$.
Hence, recalling~\eqref{defa0} and~\eqref{defa00}, we have that
\begin{equation}\label{986hggwp}
\limsup_{x\to a_0} u(x)=1.\end{equation}
Let ~$\mu\in(0,1)$, to be taken arbitrarily small
in the following. Then, by~\eqref{LIMITS},
we know that there exists~$\rho_\mu>a_0$ such that  
\begin{equation}\label{2.18BIS}
u(x)\geq 1-\mu \qquad {\mbox{ for every }}\,x\in[\rho_\mu, +\infty).\end{equation}
For~$\rho\ge\rho_\mu>a_0$,
we take~$\tau_{\rho}\in C^\infty \left(\R,[0,1]\right)$,
with~$\tau_{\rho}=1$ in~$(-\infty,\rho]$ and $\tau_{\rho}=0$ in~$[3r+\rho, +\infty)$.
Let also
\begin{equation}\label{urhoR} u_{\rho}(x):=\left\{\begin{matrix}
u(x) & {\mbox{ if }}x\le a_0,\\
\tau_{\rho}(x)+(1-\tau_{\rho}(x))\,u(x)&{\mbox{ if }}x>a_0.\end{matrix}
\right.\end{equation}
We notice that, since $\rho>a_0$,
we get that
\begin{equation}\label{jgnvcsaedwkergm:0}
u_{\rho}=1\quad  {\mbox{in }}\,(a_0, \rho].\end{equation} 
Furthermore, since $|u|\leq1 $ by {\bf Step 1},
and~$\tau_{\rho}\ge0$, we get that, if~$x>a_0$,
$$ u_\rho-u=\tau_{\rho}(1-u)\ge0,$$
and therefore
\begin{equation} \label{726359238882:1}
u\le u_{\rho}\leq 1. \end{equation} 
Moreover, for any~$x\in(a_0-r,a_0+r)$ we have that~$a_0\in(x-r,x+r)$,
and thus, in light of~\eqref{986hggwp},
\begin{equation} \label{726359238882:2}
\sup_{(x-r,x+r)} u=\sup_{(x-r,x+r)} u_{\rho}=1.\end{equation}
By~\eqref{726359238882:1} and~\eqref{726359238882:2} we obtain that,
for any~$x\in(a_0-r,a_0+r)$,
\begin{equation}\label{osc1} \osc{(x-r,x+r)} u_{\rho}\le \osc{(x-r,x+r)} u.\end{equation} 

Now, we observe that, by definition, $u_{\rho}=u$ outside~$[a_0,\rho+3r]$, so, by the minimality of~$u$, we get that
\begin{equation}\label{CRCR-110}
\begin{split}
0 \,&\le{\mathcal{E}}_{(a_0-r,\rho+4r)}(u_{\rho})-{\mathcal{E}}_{
(a_0-r,\rho+4r)}(u)\\
&=
\frac1{2r^2}\int_{a_0-r}^{\rho+4r} \left[\left(\osc{(x-r,x+r)} u_{\rho}\right)^2-
\left(\osc{(x-r,x+r)} u\right)^2\right]\,dx
+\int_{a_0-r}^{\rho+4r} \Big( W(u_{\rho}(x))-W(u(x))\Big)\,dx.
\end{split}\end{equation}
Hence, recalling \eqref{osc1} and the definition of $u_{\rho}$, we obtain 
\begin{equation}\label{CR-110}
0\le
\frac1{2r^2}\int_{a_0+r}^{\rho+4r} \left[\left(\osc{(x-r,x+r)} u_{\rho}\right)^2-
\left(\osc{(x-r,x+r)} u\right)^2\right]\,dx
+\int_{a_0}^{\rho+3r} \Big( W(u_{\rho}(x))-W(u(x))\Big)\,dx.
\end{equation}

Now, we claim that
\begin{equation}\label{j865ynfdldsa}
\frac1{2r^2}\int_{a_0+r}^{+\infty}
\left(\osc{(x-r,x+r)} u \right)^2\, dx+\int_{a_0}^{+\infty}W(u(x))dx>0.
\end{equation}
Indeed, if it were not the case, we would have that~$u(x)=1$ for almost every~$x\geq a_0$. But this would be in contradiction with the fact that $a_0<b_0$ and $\liminf_{x\to b_0} u(x)=B_0<0$. Hence,
\eqref{j865ynfdldsa} is established.

As a consequence of~\eqref{j865ynfdldsa}, for large~$\rho$, we can write
\begin{equation}\label{8u01939jxJAJ-xU}
\frac1{2r^2}\int_{a_0+r}^{\rho-r}
\left(\osc{(x-r,x+r)} u\right)^2\,dx
+\int_{a_0}^{\rho} W(u(x))\,dx\ge \hat c,\end{equation}
for some~$\hat c>0$, independent of~$\mu$ and~$\rho$.

Also, from~\eqref{jgnvcsaedwkergm:0} we deduce that
\begin{equation*}
\frac1{2r^2}\int_{a_0+r}^{\rho-r}
\left(\osc{(x-r,x+r)} u_{\rho}\right)^2dx=0
\qquad{\mbox{and}}\qquad
\int_{a_0}^{\rho}W(u_{\rho}(x))\,dx
=0.\end{equation*}
This and~\eqref{8u01939jxJAJ-xU} imply that
$$ 
\frac1{2r^2}\int_{a_0+r}^{\rho-r} \left[\left(\osc{(x-r,x+r)} u_{\rho}\right)^2-
\left(\osc{(x-r,x+r)} u\right)^2\right]\,dx
+\int_{a_0}^{\rho} \Big( W(u_{\rho}(x))-W(u(x))\Big)\,dx\le-\hat{c}.
$$
Then, we insert this information into~\eqref{CR-110} and we find that
\begin{equation}\label{6567826799728827171832}
\hat{c}\le
\frac1{2r^2}\int_{\rho-r}^{\rho+4r} \left[\left(\osc{(x-r,x+r)} u_{\rho,R}\right)^2-
\left(\osc{(x-r,x+r)} u\right)^2\right]\,dx
+\int_\rho^{\rho+3r} \Big( W(u_{\rho}(x))-W(u(x))\Big)\,dx.
\end{equation}

Now we observe that, thanks to~\eqref{2.18BIS}, for any~$x\in[\rho,\rho+3r]$,
$$ u_{\rho}(x)-u(x)=
\tau_{\rho}(x)\,(1-u(x))\leq\mu,$$ and thus
\begin{equation*}
\left|\int_\rho^{\rho+3r} \Big( W(u_{\rho}(x))-W(u(x))\Big)\,dx\right|\le
3 r \max_{{t,s\in [-1,1]}\atop{|t-s|\leq \mu}} |W(t)-W(s)|\to 0 \qquad \mbox{as $\mu\to 0$}.
\end{equation*}
Using this,
as long as~$\mu>0$ is sufficiently small (possibly in dependence on~$r$), we get
from \eqref{6567826799728827171832} that
\begin{equation}\label{878776766192777j1}
\frac{\hat{c}}{2}\le
\frac1{2r^2}\int_{\rho-r}^{\rho+4r} \left[\left(\osc{(x-r,x+r)} u_{\rho}\right)^2-
\left(\osc{(x-r,x+r)} u\right)^2\right]\,dx.
\end{equation}
 
We also observe that if~$\rho\geq \rho_\mu+2r$
and~$x\ge\rho-r$, then~$x-r\geq \rho_\mu>a_0$, and 
therefore, by the definition of~$u_\rho$,
and recalling~\eqref{2.18BIS}, we get 
\[\sup_{(x-r,x+r)} u_{\rho}\leq \sup_{(x-r,x+r)} u+\sup_{(x-r,x+r)} \tau_{\rho}(1-u)\leq \sup_{(x-r,x+r)} u+\mu.\]
Then, using this observation and recalling \eqref{726359238882:1},
we conclude that 
\begin{equation}\label{87-os01}
\osc{(x-r,x+r)} u_{\rho}\le \osc{(x-r,x+r)} u+\mu,
\end{equation} 
for any~$x\ge\rho-r$ with~$\rho\geq \rho_\mu+2r$.

{F}rom~\eqref{87-os01}, for any~$x\ge\rho-r$
and~$\rho\geq \rho_\mu+2r$, we have that
\begin{eqnarray*}&&
\left(\osc{(x-r,x+r)} u_{\rho}\right)^2-
\left(\osc{(x-r,x+r)} u\right)^2
\le\left(\osc{(x-r,x+r)} u+\mu\right)^2-
\left(\osc{(x-r,x+r)} u\right)^2\\
&&\qquad\quad= \mu^2+2\mu\,\osc{(x-r,x+r)} u
\leq \mu^2+\mu+\mu\left(\osc{(x-r,x+r)} u\right)^2.
\end{eqnarray*}
Therefore, we conclude that, if~$\rho\geq \rho_\mu+2r$, 
\[
\int_{\rho-r}^{\rho+4r} \left[\left(\osc{(x-r,x+r)} u_{\rho}\right)^2-
\left(\osc{(x-r,x+r)} u\right)^2\right]\,dx\le 5\mu^2r+5\mu r+ \mu  \mathcal{E}(u)
\] where $ \mathcal{E}(u)$ is the energy defined in~\eqref{E BOUND}.
Plugging this information into~\eqref{878776766192777j1},
and recalling the claim~\eqref{E BOUND} in {\bf Step 2}, we conclude that
\[ \frac{\hat{c}}{2} \le\frac{ 5\mu^2r+5\mu r+ \mu  \mathcal{E}(u)}{2r^2},\]
which leads to a contradiction by sending~$\mu\searrow0$, and this concludes the proof of \eqref{claim2}.  
 
 \smallskip 

\noindent{\bf Step 5: proof of claim \eqref{claim4}.}

For the proof of \eqref{claim4}, the argument is the same as for the proof of  \eqref{claim2} in {\bf Step 4}, with obvious modifications. We sketch it briefly for the reader's convenience.

We fix $\mu>0$ and $ \lambda_\mu<b_1$ such that $u(x)<-1+\mu$
for every $x\leq \lambda_\mu$.  For any~$\lambda<\lambda_\mu$, 
we take~$\tau_{\lambda}\in C^\infty \left(\R,[-1,0]\right)$,
with~$\tau_{\lambda}=-1$ in~$[\lambda, +\infty)$ and $\tau_{\lambda}=0$ in~$(-\infty, \lambda-3r,]$, and we define 
\begin{equation}\label{ul} u_{\lambda}(x):=\left\{\begin{matrix}
u(x) & {\mbox{ if }}x\ge b_1,\\
\tau_{\lambda}(x)+(1+\tau_{\lambda}(x))\,u(x)&{\mbox{ if }}x<b_1.\end{matrix}
\right.\end{equation} 
As done in {\bf Step 4}, it is easy to check that
for any~$x\in(b_1-r,b_1+r)$
\begin{equation}\label{osc11}
\osc{(x-r,x+r)} u_{\lambda}\le \osc{(x-r,x+r)} u\end{equation} 
and $u=u_\lambda$ outside $[\lambda-3r, b_1]$. As a consequence
of these observations and of the minimality of $u$, 
\[
0\le
\frac1{2r^2}\int_{\lambda-4r}^{b_1-r} \left[\left(\osc{(x-r,x+r)} u_{\lambda}\right)^2-
\left(\osc{(x-r,x+r)} u\right)^2\right]\,dx
+\int_{\lambda-3r}^{b_1} \Big( W(u_{\lambda}(x))-W(u(x))\Big)\,dx.
\]
As in {\bf Step 4}, we see that, for $\lambda<<-1$,
\[\frac1{2r^2}\int_{\lambda+r}^{b_1-r}
\left(\osc{(x-r,x+r)} u\right)^2\,dx
+\int_{\lambda}^{b_1} W(u(x))\,dx\ge \hat c,\]
for some~$\hat c>0$, independent of~$\mu$, $\lambda$, otherwise we would get $u(x)=-1$ for almost every $x\leq b_1$ in contradiction with the definition of $A_0$. 

Thus, using the fact that ~$u_{\lambda}=-1$ in~$[\lambda, b_1)$, we conclude that
\[
\hat{c}\le
\frac1{2r^2}\int_{\lambda-4r}^{\lambda+r} \left[\left(\osc{(x-r,x+r)} u_{\lambda}\right)^2-
\left(\osc{(x-r,x+r)} u\right)^2\right]\,dx
+\int_{\lambda-3r}^{\lambda} \Big( W(u_{\lambda}(x))-W(u(x))\Big)\,dx.
\]
Now we observe that, for any~$x\in[\lambda-3r, \lambda]$,
\begin{equation*}\left|
\int_{\lambda-3r}^\lambda \Big( W(u_{\lambda}(x))-W(u(x))\Big)\,dx\right|\le
3 r \max_{{t,s\in [-1,1]}\atop{|t-s|\leq \mu}} |W(t)-W(s)|\to 0 \qquad \mbox{as $\mu\to 0$}.
\end{equation*}
Therefore, for $\lambda<<-1$, we get that
\begin{equation}\label{pro}
\frac{\hat{c}}{2}\le
\frac1{2r^2}\int_{\lambda-4r}^{\lambda+r} \left[\left(\osc{(x-r,x+r)} u_{\lambda}\right)^2-
\left(\osc{(x-r,x+r)} u\right)^2\right]\,dx.
\end{equation}
Moreover, recalling the definition of~$u_\lambda$,
it is easy to check that,
for any $x\leq \lambda+r$ and~$\lambda<\lambda_\mu-2r$, there holds 
\[
\osc{(x-r,x+r)} u_{\lambda}\le \osc{(x-r,x+r)} u+\mu.
\]
Now
the conclusion follows plugging this information in \eqref{pro} and
sending $\mu\to 0$, obtaining a contradiction as in {\bf Step 4}.

\section{The difference equation satisfied by minimal heteroclinic connections,
and proof of Theorem~\ref{euler}} \label{seceuler} 

In this section, we provide a proof of Theorem \ref{euler}. In order to get the result, we will need to introduce an auxiliary functional. 
We observe that, for monotone functions, the oscillation defined in~\eqref{1.1bis}
reads as
\[\osc{(x-r,x+r)} u=|u(x+r)-u(x-r)|.\] 
Moreover, it is easy to check that for any~$v\in L^{\infty}_{{\rm{loc}}}(\R)$, there holds 
$$ \left|v(x+r)-v(x-r)\right|\le \osc{(x-r,x+r)} v.$$

We introduce the following 
auxiliary functional, defined, for any~$r>0$, $a<b$, and~$W$ as in~\eqref{DOUBLEW},
as
\begin{equation}\label{CALF}
{\mathcal{F}}_{(a,b)}(u):=\frac1{2r^2}\int_a^b \left(u(x+r)-u(x-r)\right)^2\,dx
+\int_{a}^{b} W(u(x))\,dx.\end{equation}
In this setting, we say that~$u\in L^{\infty}_{\rm{loc}}(\R)$ is a local minimizer
of~${\mathcal{F}}$ if, for any~$a<b$ and any~$v\in L^{\infty}_{\rm{loc}}(\R)$
such that~$u=v$ outside~$[a+r,b-r]$, we have that
$$ {\mathcal{F}}_{(a,b)}(u)\le {\mathcal{F}}_{(a,b)}(v).$$
It is easy to check that {if~$u$ is a critical point
for the operator ${\mathcal{F}}_{(a,b)}$ in~\eqref{CALF}, then
\begin{equation}\label{EL}
 \int_\R \frac{(u(x+2r)+u(x-2r)-2 u(x))}{r^2} \,\varphi(x)\,dx =\,\int_\R W'(u(x))\,\varphi(x)
 \,dx,
\end{equation}
for all~$\varphi\in C^\infty(\R)$,
such that~$\varphi\equiv0\ \mbox{ in } \R\setminus[a+r,b-r]$.

Comparing~\eqref{CALE} and~\eqref{CALF}, one notices that
 for any~$v\in L^{\infty}_{{\rm{loc}}}(\R)$,
\begin{equation}\label{ine}
{\mathcal{F}}_{(a,b)}(v)\le{\mathcal{E}}_{(a,b)}(v).
\end{equation} and moreover 
\begin{equation}\label{FCONE}
{\mbox{if $u$ is monotone in $(a-r,b+r)$, then ${\mathcal{F}}_{(a,b)}(u)=
{\mathcal{E}}_{(a,b)}(u)$.}}
\end{equation}

Combining \eqref{ine} with~\eqref{FCONE} we obtain that
\begin{equation}\label{min1}
\begin{split}&\mbox{
if $u$ is monotone and it is a local minimizer for the functional
in~\eqref{CALF},} \\ &\mbox{
then it is also a local minimizer for the functional in~\eqref{CALE}.}\end{split}\end{equation} 

In order to prove Theorem \ref{euler}, in light of~\eqref{EL}, it is sufficient to show that 
the reverse statement of~\eqref{min1} holds true. 
This will be accomplished in Proposition~\ref{teoincr} below.

To this aim, we need the following result,
which is the analogous of Proposition~\ref{exdir}
for the functional~\eqref{CALF}.
More precisely:

\begin{lemma}\label{competit2} 
Assume that~\eqref{DOUBLEW} holds true and consider  $R>2r$. 

 Then there exists a solution $v_R$ to the Dirichlet problem \begin{equation}\label{dirpb2}
\inf \Big\{ \mathcal{F}_{(-R,R)} (v) {\mbox{ s.t. }} v\in L^{\infty}(\R),
\, v(x)=1 \,\mbox{ for a.e. $x\geq R-r$ and }\,v(x)=-1\,\mbox{ for a.e. $x\leq -R+r$}\Big\}. 
\end{equation} 
Moreover $v_R$ is monotone nondecreasing and piecewise constant on intervals of length at least $2r$.

Finally $v_R$ is also a solution to the Dirichlet problem \eqref{dirpb}.
\end{lemma}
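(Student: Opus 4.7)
The plan is to exploit the key structural difference between $\mathcal{F}$ and $\mathcal{E}$: the functional $\mathcal{F}$ depends only on the \emph{pointwise} values $v(x\pm r)$, which allows a slice decomposition reducing it to a family of discrete one-dimensional minimizations parametrized by a shift $t\in[0,r)$. Because these discrete problems further decouple by parity, a minimizer can be chosen in a form that is automatically piecewise constant on intervals of length $2r$. First, I would observe that truncation to $[-1,1]$ does not increase $\mathcal{F}$: the map $s\mapsto \max\{-1,\min\{1,s\}\}$ is $1$-Lipschitz, so it cannot increase $|v(x+r)-v(x-r)|^2$, while it decreases the potential part by~\eqref{DOUBLEW}. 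Hence we may assume throughout that $|v|\le 1$.

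Assume for simplicity that $R=Nr$ with $N\in\N$ (otherwise $R$ is slightly enlarged, which does not affect the conclusion). Writing $x=nr+t$ with $n\in\Z$, $t\in[0,r)$, and setting $w_n(t):=v(nr+t)$, a direct change of variables gives
\[
\mathcal{F}_{(-R,R)}(v)=\int_0^r \Phi\bigl((w_n(t))_n\bigr)\,dt,\qquad
\Phi(\sigma):=\frac{1}{2r^2}\sum_{n=-N}^{N-1}(\sigma_{n+1}-\sigma_{n-1})^2+\sum_{n=-N}^{N-1}W(\sigma_n),
\]
with the constraints $\sigma_n=-1$ for $n\le -N$ and $\sigma_n=1$ for $n\ge N-1$. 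Crucially, $\Phi$ is independent of $t$, and the interaction $(\sigma_{n+1}-\sigma_{n-1})^2$ couples only indices of the same parity; hence $\Phi=\Phi_{\mathrm{e}}(\sigma_{\mathrm{even}})+\Phi_{\mathrm{o}}(\sigma_{\mathrm{odd}})$, where both sub-problems reduce, after a relabeling shift, to one and the same scalar Allen--Cahn-type minimization
\[
\min\Bigg\{\frac{1}{2r^2}\sum_{k=0}^{N-1}(m_{k+1}-m_k)^2+\sum_{k=1}^{N-1}W(m_k):\ (m_k)\in[-1,1]^{N+1},\ m_0=-1,\ m_N=1\Bigg\}.
\]
This is a continuous minimization on a compact set, so it attains a minimum $(\bar m_k)$. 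Feeding the \emph{same} $(\bar m_k)$ into both parity sub-problems and aligning the one-step shift produces $\bar\sigma$ satisfying $\bar\sigma_{2k-1}=\bar\sigma_{2k}=\bar m_{k+N/2}$, so that $v_R(x):=\bar m_\ell$ on $[(2\ell-1-N)r,(2\ell+1-N)r)$ is piecewise constant on intervals of length $2r$ and minimizes $\mathcal{F}_{(-R,R)}$, establishing existence for~\eqref{dirpb2}.

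For the monotonicity of $v_R$, I would apply the classical discrete Polya--Szego rearrangement to $(\bar m_k)$: since all its values lie in $[-1,1]=[\bar m_0,\bar m_N]$ after the preliminary truncation, the monotone rearrangement preserves the extremal boundary values $\pm 1$ and the potential sum $\sum W(\bar m_k)$, while not increasing $\sum(\bar m_{k+1}-\bar m_k)^2$; hence a monotone $(\bar m_k)$ can be chosen, yielding $v_R$ monotone nondecreasing. For the last claim, by~\eqref{FCONE} one has $\mathcal{E}_{(-R,R)}(v_R)=\mathcal{F}_{(-R,R)}(v_R)$, while~\eqref{ine} yields $\mathcal{F}_{(-R,R)}(v)\le \mathcal{E}_{(-R,R)}(v)$ for any admissible competitor $v$; chaining these,
\[
\mathcal{E}_{(-R,R)}(v_R)=\mathcal{F}_{(-R,R)}(v_R)\le \mathcal{F}_{(-R,R)}(v)\le \mathcal{E}_{(-R,R)}(v),
\]
so $v_R$ also solves the original problem~\eqref{dirpb}.

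The main obstacle is the careful bookkeeping of indices in the parity decoupling: the two sub-problems coincide modulo a one-step shift, and it is this shift that yields the pairing $\bar\sigma_{2k-1}=\bar\sigma_{2k}$ responsible for the $2r$-long plateaus. A second delicate point is the justification of the discrete Polya--Szego inequality under prescribed extremal boundary data. As an alternative to the rearrangement, one could re-run the case analysis from Lemma~\ref{competit} for $\mathcal{F}$ using the inequality $|T(u)(x+r)-T(u)(x-r)|\le |u(x+r)-u(x-r)|$ valid for any $1$-Lipschitz truncation $T$, in place of the oscillation identity~\eqref{taglio}, and thereby reduce to monotone competitors directly.
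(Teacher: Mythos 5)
Your proposal follows the same overall strategy as the paper: truncate to $[-1,1]$, slice $\mathcal{F}$ by writing $x=nr+t$ to reduce to a $t$-independent discrete functional, decouple into two chains by parity, solve a finite scalar minimization on a compact set, and then build a piecewise-constant $v_R$ (with plateaus of length $2r$) achieving the minimal value; finally use~\eqref{ine} and~\eqref{FCONE} to pass from~\eqref{dirpb2} to~\eqref{dirpb}. The one place where your argument genuinely diverges is the monotonicity of the discrete minimizer: the paper defers to the sliding/truncation case analysis of Lemma~\ref{competit} (showing that replacing a non-monotone sequence by a suitably truncated one lowers both the interaction and the potential), whereas you invoke the discrete P\'olya--Szeg\H{o} fact that the monotone rearrangement does not increase $\sum (m_{k+1}-m_k)^2$ while leaving $\sum W(m_k)$ unchanged and preserving the boundary values $\pm1$. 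Your route is shorter but uses a named inequality the paper does not rely on; the paper's route is longer but reuses machinery already built for the continuum problem. Both are valid.

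Two small points worth flagging in your write-up. First, when you say ``the interaction $(\sigma_{n+1}-\sigma_{n-1})^2$ couples only indices of the same parity,'' this is correct, but the decoupling works because the potential term $W(\sigma_n)$ for $n$ even must be grouped with the interaction terms coming from $n$ odd (which involve $\sigma_{n\pm1}$, i.e.\ even indices), and vice versa; this is exactly the regrouping the paper performs in~\eqref{discreteen1}, and it is worth making the pairing explicit to avoid a wrong split. Second, your identification $\bar\sigma_{2k-1}=\bar\sigma_{2k}=\bar m_{k+N/2}$ implicitly assumes $N$ even and that the two parity chains have exactly the same number of free nodes; when $N$ is odd they differ by one, and one must use a slightly shorter scalar problem for one of the chains (this is precisely why the paper fixes $K=\lfloor N/2\rfloor$ and uses the nonnegativity of $W$ to compare the two minima). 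Neither of these affects the correctness of the argument, but both deserve a sentence if this is meant to be a self-contained proof.
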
 

\begin{proof}
 
 We notice that for any~$c\in \R$ and
for any~$v\in L^{\infty}_{{\rm{loc}}}(\R)$, we get that 
\begin{equation}\label{incrementi1} 
|v(x+r)-v(x-r)|=|\min\{v(x+r),c\}-\min\{v(x-r),c\}|+
|\max\{v(x+r),c\}-\max\{v(x-r),c\}|,
\end{equation}
at almost every~$x$.
Indeed, if~$v(x-r)\leq c$ and~$v(x+r)\leq c$, then
\begin{eqnarray*}
&& |\min\{v(x+r),c\}-\min\{v(x-r),c\}|=|v(x+r)-v(x-r)|,  \\
{\mbox{and }}&&
|\max\{v(x+r),c\}-\max\{v(x-r),c\}|=0,
\end{eqnarray*}
which implies~\eqref{incrementi1} in this case.
The case in which~$v(x-r)\geq c$ and~$v(x+r)\geq c$ can be treated
similarly.

If instead~$v(x-r)\leq c\leq v(x+r)$ (or similarly~$v(x+r)\leq c
\leq v(x-r)$), then\
\begin{eqnarray*} && |\min\{v(x+r),c\}-\min\{v(x-r),c\}|
+ |\max\{v(x+r),c\}-\max\{v(x-r),c\}|\\ &&\qquad
= c-v(x-r)+v(x+r)-c=|v(x+r)-v(x-r)|,\end{eqnarray*}
which gives~\eqref{incrementi1} in this case as well.

Therefore, thanks to \eqref{incrementi1} and \eqref{DOUBLEW}, we get that 
\[\mathcal{F}_{(-R,R)} \big(\max\{-1,\min\{1,v\}\}\big)
\leq \mathcal{F}_{(-R,R)} (v).\] 
This implies that we can reduce to consider the case in
which~$-1\leq v(x)\leq 1$ for a.e. $x$.  
So, we fix  $v\in L^\infty(\R)$ with values in $[-1,1]$  such that 
\begin{equation}\label{BOrs}
{\mbox{$v(x)=1$ for a.e. $x\geq R-r$ and $v(x)=-1$ for a.e. $x\leq -R+r$. 
}}\end{equation}
Let $N:=\left\lceil \frac{2R}{r}\right\rceil . $ 
For any fixed~$x\in [-R-r, -R)$,
we define the sequence
\begin{equation}\label{seq} v_n^x:= v(x+nr), \qquad\text{ for }n\in\Z.
\end{equation}
Note that 
\begin{equation}\label{fkerohin}
{\mbox{$v_n^x=-1$ if $n\le1$ and $v_n^x=1$ if $n\ge N$,
for all $x\in [-R-r, -R)$,}}\end{equation} thanks to~\eqref{BOrs}.

{F}rom~\eqref{BOrs}, we also see that
\begin{equation}\label{AUIJudjhascnikx}
\begin{split}
{\mathcal{F}}(v)\,&=
\int_{\R}\left(  \frac{1}{2r^2} (v(y+r)-v(y-r))^2+ W(v(y))\right)\,dy
\\&=\sum_{n\in\Z}
\int_{-R+(n-1)r}^{-R+nr}\left(  \frac{1}{2r^2} (v(y+r)-v(y-r))^2+ W(v(y))\right)\  dy.\end{split}
\end{equation}
Furthermore, by the change of variable $y=x+nr$, 
\[\int_{-R-r}^{-R}\left(  \frac{1}{2r^2} (v_{n+1}^x-v_{n-1}^x)^2+ W(v_n^x)\right)\  dx=
\int_{-R+(n-1)r}^{-R+nr}\left(  \frac{1}{2r^2} (v(y+r)-v(y-r))^2+ W(v(y))\right)\  dy. \]
{F}rom this and~\eqref{AUIJudjhascnikx}, dividing odd and even indexes,
we have that
\begin{eqnarray*}
\mathcal{F}_{(-R,R)} (v)&=&
\int_{-R-r}^{-R} \sum_{n\in\Z }\left(  \frac{1}{2r^2} (v_{n+1}^x-v_{n-1}^x)^2+ W(v_n^x)
\right)\  dx
\\&=&
\int_{-R-r}^{-R} \sum_{k\in\Z }\left(  \frac{1}{2r^2} (v_{2k+2}^x-v_{2k}^x)^2+ W(v_{2k+1}^x)
\right)\  dx\\&&\qquad+
\int_{-R-r}^{-R} \sum_{k\in\Z }\left(  \frac{1}{2r^2} (v_{2k+1}^x-v_{2k-1}^x)^2+ W(v_{2k}^x)
\right)\  dx.
\end{eqnarray*}
Hence, in light of~\eqref{fkerohin}, if we set $K:=\left\lfloor\frac{N}{2}\right\rfloor$,
we can write
\begin{equation}\begin{split}\label{discreteen1}
\mathcal{F}_{(-R,R)} (v)\,&=
\int_{-R-r}^{-R} \sum_{k=0 }^K\left(  \frac{1}{2r^2} (v_{2k+2}^x-v_{2k}^x)^2+ W(v_{2k+1}^x)
\right)\  dx\\&\qquad+
\int_{-R-r}^{-R} \sum_{k=0}^K\left(  \frac{1}{2r^2} (v_{2k+1}^x-v_{2k-1}^x)^2+ W(v_{2k}^x)
\right)\  dx.
\end{split}
\end{equation}
Then, we consider the finite minimization problem:
\begin{equation}\label{dirdiscreto}
m_R:=\min \left\{ \sum_{j=0}^K \left(\frac{1}{2r^2} (w_{j+1}-w_{j})^2+ W(w_j)\right)
\right\},
\end{equation}
where the class of competitors is such that~$w_j=-1$ for all~$j\le0$,
$w_{j}=1$ for all~$j\ge K+1$, and~$w_j\in[-1,1]$ for all~$j\in\Z$. 

Note that, by \eqref{fkerohin} and \eqref{discreteen1}, we get that 
 \begin{equation}\label{energiadisstima} 
 \mathcal{F}_{(-R,R)} (v)\geq 2\, r\,m_R\,.
\end{equation} 
%where $m_R$ is defined in \eqref{dirdiscreto}. 

With analogous arguments as in the proof of Lemma  \ref{competit}, one can see
that monotone sequences make the energy functional lower,
and therefore, as in Proposition~\ref{exdir}, one finds
that
there exists a monotone nondecreasing  solution to \eqref{dirdiscreto},
that is a solution with $w_j\leq w_{j+1}$.  Let us denote with $(w^R_j)$ this solution. 

We define now a function $v_R:[-R-r, R+r]\to [-1,1]$ as follows: \begin{equation}\label{PALCONS}
v_R(x):= w_j^R, \qquad \forall x\in [-R-r+2jr, -R-r+2(j+1)r).\end{equation}
We have that $v_R(x)=-1$ for $x\leq -R+r$, $v_R(x)=1$ for $x\geq R-r$, $v_R$ is monotone nondecreasing and 
 $  \mathcal{F}_{(-R,R)} (v_R)=2\; r\;m_R$. Recalling \eqref{energiadisstima},
 we see that~$v_R$ attains the minimal possible value, and this concludes the proof of existence of a monotone nondecreasing, piecewise constant solution to 
 \eqref{dirpb2}. 
 
Finally, $v_R$ is also a solution to the Dirichlet problem \eqref{dirpb}, due to \eqref{min1}. 
\end{proof}

As a consequence of Lemma \ref{competit2}, we obtain the counterpart of \eqref{min1}.

\begin{proposition}\label{teoincr}
Let $u$ be a local minimizer for the functional
in~\eqref{CALE} which satisfies \eqref{LIMITS}. Then, $u$
is also a local minimizer for the functional in~\eqref{CALF}.
\end{proposition}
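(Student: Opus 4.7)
The plan is to argue by contradiction, using the monotonicity of $u$ guaranteed by Theorem~\ref{MONOTH} together with the coincidence \eqref{FCONE} of $\mathcal{F}$ and $\mathcal{E}$ on monotone functions. Since $u$ is a local $\mathcal{E}$-minimizer satisfying \eqref{LIMITS}, Theorem~\ref{MONOTH} gives that $u$ is monotone nondecreasing on all of $\R$; hence, by \eqref{FCONE}, $\mathcal{F}_{(a,b)}(u) = \mathcal{E}_{(a,b)}(u)$ for every $a<b$. Suppose, towards a contradiction, that $u$ is not a local $\mathcal{F}$-minimizer: then there exist $a<b$ and $v\in L^\infty_{\mathrm{loc}}(\R)$ with $v=u$ outside $[a+r,b-r]$ and $\mathcal{F}_{(a,b)}(v) < \mathcal{F}_{(a,b)}(u)$.

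The crucial step is to produce a monotone nondecreasing competitor $\tilde v$ with $\tilde v = u$ outside $[a+r,b-r]$ satisfying $\mathcal{F}_{(a,b)}(\tilde v) \le \mathcal{F}_{(a,b)}(v)$, in such a way that $\tilde v$ is monotone on the whole of $(a-r,b+r)$ (so that \eqref{FCONE} applies to it). The construction mirrors the one in Lemma~\ref{competit}, replacing the oscillation inequality \eqref{taglio} by the identity \eqref{incrementi1}, which, being an equality, is at least as strong a cutting tool as \eqref{taglio}. Concretely, I would first cap $v$ on $[a+r,b-r]$ between $\alpha := u((a+r)^-)$ and $\beta := u(b-r)$: since $u$ is globally monotone, on $(a-r,a+r)$ one has $u\le\alpha$ and on $(b-r,b+r)$ one has $u\ge\beta$, so the cap is compatible with the boundary values and, by \eqref{incrementi1} applied with $c=\alpha$ and then $c=\beta$, it does not increase the first term of $\mathcal{F}_{(a,b)}$; by \eqref{DOUBLEW} (and $|\alpha|,|\beta|\le 1$) it does not increase the potential term either. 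Then I would run, exactly as in Lemma~\ref{competit}, the rearrangement on each non-monotone pair as in \eqref{aoboaobo2}, using \eqref{incrementi1} to see that min/max modifications do not enlarge the difference term while keeping the potential term under control.

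Once $\tilde v$ is constructed, monotonicity on $(a-r,b+r)$ gives, via \eqref{FCONE}, the identity $\mathcal{F}_{(a,b)}(\tilde v) = \mathcal{E}_{(a,b)}(\tilde v)$. Chaining the inequalities,
$$\mathcal{E}_{(a,b)}(\tilde v) = \mathcal{F}_{(a,b)}(\tilde v) \le \mathcal{F}_{(a,b)}(v) < \mathcal{F}_{(a,b)}(u) = \mathcal{E}_{(a,b)}(u).$$
Since $\tilde v = u$ outside $[a+r,b-r]$, the function $\tilde v$ is an admissible competitor in the definition of local $\mathcal{E}$-minimizer, so the above strict inequality contradicts the $\mathcal{E}$-minimality of $u$, and the proposition follows.

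The main obstacle is precisely the second step: Lemma~\ref{competit2} already provides monotone $\mathcal{F}$-competitors for the special $\pm 1$ Dirichlet data, but here one needs the analog with general monotone boundary data inherited from $u$. The verification that the cutting/rearrangement scheme of Lemma~\ref{competit} transfers verbatim to $\mathcal{F}$ (thanks to \eqref{incrementi1} being an identity rather than merely a subadditivity inequality) is the only nontrivial point; once this is in place, the contradiction argument is immediate.
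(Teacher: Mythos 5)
The cap-then-rearrange step is where the argument breaks. First, capping $v$ between $\alpha := u((a+r)^-)$ and $\beta := u(b-r)$ does \emph{not} control the potential term: by \eqref{DOUBLEW}, $W$ is even on $[-1,1]$ with its unique maximum at $0$, so if, say, $0<\alpha$ and $v(x)<-\alpha$ at some interior point, then $W(\max\{v(x),\alpha\})=W(\alpha)>W(v(x))$ and the potential strictly increases. The inequality $W(\max\{v,c\})\le W(v)$ exploited in Lemma~\ref{competit} is available there precisely because the ambient boundary data are $\pm1$, the equilibria where $W$ vanishes; with generic interior levels $\alpha,\beta\in(-1,1)$ it fails. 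Second, the identity \eqref{incrementi1} controls the increment term only if the cut is applied at both points $x\pm r$ simultaneously, hence uniformly on $\R$; but a global cut $\max\{\,\cdot\,,\alpha\}$ alters $u$ on $(-\infty,a+r)$ (where $u<\alpha$ far out, since $u\to-1$), producing an inadmissible competitor, while the local cut restricted to $[a+r,b-r]$ is not covered by \eqref{incrementi1} and can enlarge individual increments $|v(x+r)-v(x-r)|$ when $x-r$ lies outside the cut region. Finally, there is a structural obstruction that no rearrangement scheme can fix: if $u$ is constant on $[a+r,b-r]$ --- which Theorem~\ref{costante} shows can happen for a minimal heteroclinic --- then $\alpha=\beta$ and the \emph{only} function coinciding with $u$ outside $[a+r,b-r]$ and monotone on $(a-r,b+r)$ is $u$ itself; so if some non-monotone $v$ had $\mathcal{F}_{(a,b)}(v)<\mathcal{F}_{(a,b)}(u)$, there would be nothing to monotonize it to, and closing your chain of inequalities would require exactly the statement you are trying to prove.

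The paper avoids all of this by never attempting a local monotonization with arbitrary monotone boundary data. Starting from the same contradictory hypothesis $\mathcal{F}_{(-M_0,M_0)}(v)\le\mathcal{F}_{(-M_0,M_0)}(u)-2\eps=\mathcal{E}_{(-M_0,M_0)}(u)-2\eps$, it truncates to $\pm1$ at a large scale $M>M_0+r$ via the map $v\mapsto v^M$ of \eqref{tagliov}, uses the approximation estimates \eqref{stimavM}, \eqref{limer}, \eqref{cen} to obtain $\mathcal{F}_{(-M,M)}(v^M)\le e_M-\eps$, and then invokes Lemma~\ref{competit2}, which furnishes a monotone (piecewise constant) $\mathcal{F}$-minimizer $v_M$ of the $\pm1$ Dirichlet problem with $\mathcal{F}_{(-M,M)}(v_M)=e_M\le\mathcal{F}_{(-M,M)}(v^M)$, giving the contradiction $e_M\le e_M-\eps$. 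Thus the monotonization is only ever needed for $\pm1$ boundary data, where the discrete reduction in Lemma~\ref{competit2} supplies it. Your first paragraph, the appeal to Theorem~\ref{MONOTH}, and the use of \eqref{FCONE} are correct and match the paper's opening; the gap is entirely in the local monotonization step, which is neither correct as stated nor the route the paper takes.
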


\begin{proof} 
By Theorem \ref{MONOTH}, we know that $u$ is monotone.
Now, we argue by contradiction, assuming that there exist~$M_0>0$,
a function~$v\in L^{\infty}(\R)$ such that~$v=u$
outside~$[-M_0+r,M_0-r]$, and~$\eps>0$, such that
\begin{equation}\label{contraincr1} \mathcal{F}_{(-M_0, M_0)}(v)\leq \mathcal{F}_{(-M_0, M_0)}(u)-
2\eps= \mathcal{E}_{(-M_0, M_0)}(u)-
2\eps,\end{equation} 
where~\eqref{FCONE} is used in the last equality.

Since $u$ is a local minimizer for the functional \eqref{CALE}, and the two functionals \eqref{CALE} and \eqref{CALF} coincide on monotone functions, we get that $v$ is not monotone in $(M_0+r,M_0-r)$.  

Now we take~$M>M_0+r $ and we consider~$v^{M}$ and $u^M$ as given in \eqref{tagliov}.
Then, we get from~\eqref{contraincr1}   that
\[\mathcal{F}_{(-M, M)}(v^{M})\leq \mathcal{F}_{(-M, M)}(u^{M})-2\eps.\] 
Consequently, recalling the notation
of~$e_M$ and~$e$, as given in~\eqref{mongiu} and~\eqref{limer}, and exploiting~\eqref{cen} and~\eqref{stimavM},
we see that
\begin{equation}\label{c111}\begin{split}&
\mathcal{F}_{(-M, M)}(v^{M})\leq \mathcal{F}_{(-M, M)}(u^{M})-
2\eps= \mathcal{E}_{(-M, M)}(u^{M})-
2\eps\\ &\qquad\leq\mathcal{E}(u^M)-2\eps \leq  \mathcal{E}(u)-\eps=e-\eps\leq e_M-\eps
,\end{split}\end{equation}
where~$u_M$ is constructed in Proposition~\ref{exdir}.

Now, by Lemma \ref{competit2}, we get that there
exists $v_M$ which is monotone  and
such that 
\begin{equation}\label{c112X}e_M
=\mathcal{E}_{(-M,M)} (v_M)=\mathcal{F}_{(-M,M)} (v_M) 
.\end{equation}
On the other hand, since
$v_M=v^M=-1$ in $(-\infty, -M+r)$
and~$v_M=v^M=1$ in $(M-r, +\infty)$, we have that
$$ \mathcal{F}_{(-M,M)} (v_M)\leq \mathcal{F}_{(-M,M)} (v^M) $$
This and~\eqref{c112X} lead to
\begin{equation}\label{c112}e_M\le \mathcal{F}_{(-M,M)} (v^M) .
\end{equation}
{F}rom \eqref{c111} and \eqref{c112}  we get that
\[ e_M\leq \mathcal{E}_{(-M,M)}(u_M)-\eps=e_M-\eps\] which gives a contradiction. 
\end{proof}

{F}rom Proposition~\ref{teoincr}
we obtain Theorem \ref{euler} by arguing as follows:

\begin{proof}[Proof of Theorem \ref{euler}] By Proposition~\ref{teoincr}, we get that $u$ is a local minimizer of~\eqref{CALF}. Therefore, in view of~\eqref{EL}, 
it is a solution to \eqref{el1}. 
\end{proof} 

\section{Asymptotics as $r\searrow0$, and proof of Proposition~\ref{ASY}}\label{ASYS}

In this section, we show that the heteroclinic
connections constructed in this paper approach, for small~$r$,
the classical heteroclinics arising in ordinary differential
equations, as stated in Proposition~\ref{ASY}.

\begin{proof}[Proof of Proposition~\ref{ASY}]
Let $u_r$ be as in Theorem~\ref{exthm}, for a given~$r>0$.
Up to a translation, we will suppose that
\begin{equation}\label{5.1}
{\mbox{$u_r(x)<0$ for all~$x<0$ and $u_r(x)\ge0$ for all~$x>0$.}}
\end{equation}
Since~$u_r$ is monotone and bounded uniformly in~$r$,
and therefore bounded in BV uniformly in~$r$,
up to a subsequence we may suppose that~$u_r$ converges
to some~$u$ a.e. in~$\R$.

Consequently, from~\eqref{el1}, for every~$\phi\in C^\infty_0(\R)$,
\begin{eqnarray*}
0&=&\lim_{r\searrow0} \int_\R \left(
\frac{u_r(x+2r)+u_r(x-2r)-2u_r(x)}{r^2}-W'(u_r(x))
\right)\,\phi(x)\,dx\\
&=&\lim_{r\searrow0} 
\frac{1}{r^2}\left(
\int_\R u_r(x)\,\phi(x-2r)\,dx+
\int_\R u_r(x)\,\phi(x+2r)\,dx-2
\int_\R u_r(x)\,\phi(x)\,dx\right)
-\int_\R W'(u(x))\,\phi(x)\,dx\\
&=&
\lim_{r\searrow0} \int_\R u_r(x)\,\left(
\frac{\phi(x+2r)+\phi(x-2r)-2\phi(x)}{r^2}
\right)\,dx
-\int_\R W'(u(x))\,\phi(x)\,dx\\&=&
4\int_\R u(x)\,\phi''(x)\,dx
-\int_\R W'(u(x))\,\phi(x)\,dx.
\end{eqnarray*}
This gives that
\begin{equation*}
4u''=W'(u),\end{equation*}
in the distributional sense, and hence in the smooth sense as well,
and this proves~\eqref{EQ:44}.

Also, passing to the limit in~\eqref{5.1},
\begin{equation*}
{\mbox{$u(x)\le0$ for all~$x<0$ and $u(x)\ge0$ for all~$x>0$,}}
\end{equation*}
which gives that
$$ 
0\ge\lim_{x\nearrow0} u(x)=
u(0)=\lim_{x\searrow0} u(x)\ge0,$$
and hence~\eqref{EQ:45} is established.

Now, in light of~\eqref{minmonotono}, we can write that
\begin{eqnarray*}
&& 
\int_\R \left[ \frac1{2r^2}\Big(u(x+r)-u(x-r)\Big)^2+
W(u_r(x))\right]\,dx
=
\int_\R \left[ \frac1{2r^2}\left(\osc{(x-r,x+r)} u_r\right)^2+
W(u_r(x))\right]\,dx\\&&\qquad
=\mathcal{E}(u_r)\leq  \min\left\{\frac{4}{r}, 4+c_W\right\}
\le 4+c_W.\end{eqnarray*}
Accordingly, using Fatou's Lemma,
\begin{equation}\label{IKSBbdascjidwe234UJBS}
4+c_W \ge 
\int_\R \liminf_{r\searrow0}\left[ \frac{\big(u(x+r)-u(x-r)\big)^2}{2r^2}+
W(u_r(x))\right]\,dx=
\int_\R \left[ 2|u'(x)|^2+
W(u(x))\right]\,dx.
\end{equation}
Furthermore, we know that~$u$ is monotone nondecreasing
and with values in~$[-1,1]$, since so is~$u_r$, hence we can define
\begin{equation}\label{LIM:PA} \ell_\pm :=\lim_{x\to\pm\infty} u(x).\end{equation}
We claim that
\begin{equation}\label{AJM9iaksx}
\ell_+=1.
\end{equation}
Indeed, suppose not, say~$\ell_+<\ell$, for some~$\ell<1$. {F}rom~\eqref{EQ:45} and the monotonicity
of~$u$, we know that~$\ell_+\ge u(0)=0$. Therefore, there exists~$X>0$
such that for all~$x\ge X$ we have that~$u(x)\in \left[-\frac12,\ell\right]$.
This and~\eqref{DOUBLEW} give that, for all~$x\ge X$,
$$ W(u(x))\ge \inf_{\tau\in [-1/2,\ell]} W(\tau)>0,$$
and accordingly
$$ \int_X^{+\infty} W(u(x))\,dx=+\infty.$$
This is in contradiction with~\eqref{IKSBbdascjidwe234UJBS}, and hence it
proves~\eqref{AJM9iaksx}.

Similarly, one can show that~$\ell_-=-1$. This and~\eqref{AJM9iaksx}
lead to~\eqref{EQ:46}, as desired. Finally, since the heteroclinic satisfying \eqref{EQ:44}, \eqref{EQ:45}, \eqref{EQ:46} is unique, we have full convergence of $u_r$ to $u$.
\end{proof}

\section{Piecewise constant  heteroclinic connections,
and proof of Theorem~\ref{costante}}\label{secdiscrete} 

In this section, we prove Theorem~\ref{costante}.
The proof is based on the existence of  piecewise constant solutions to the Dirichlet problem \eqref{dirpb} given  in Lemma \ref{competit2}. 

\begin{proof}[Proof of Theorem \ref{costante}] 
By Lemma \ref{competit2}, we have that for all $R>2r$ there exists a monotone solution  $v_R$ to the Dirichlet problem \eqref{dirpb}, which is piecewise constant. 

%%% Arguing as in the proof of Theorem \ref{exthm}, we get that, u
Thus, up to  a translation, we can assume that 
$v_R(x)<0$ for any~$x<0$ and $v_R(x)\geq 0$ for any~$x>0$. 

Recalling  the construction in~\eqref{PALCONS}, we get that there exists $n(R)\to +\infty$ as $R\to +\infty$ such that
\begin{equation}\label{FVA:1}
{\mbox{$v_R$ is constant in intervals of the form $[2nr, 2(n+1)r)$ for all $-n(R)\leq n\leq n(R)$. }}\end{equation}
%%% By the same arguments as in the proof of Theorem \ref{exthm}, 
Since~$v_R$ is equibounded
and has equibounded total variation, by compactness
(see \cite[Corollary 3.49]{MR1857292}) 
up to extracting a subsequence,
we can define \begin{equation}\label{FVA:2}v(x):=\lim_{R\to+\infty} v_R(x),\end{equation}
where the limit holds
pointwise and locally in $L^p$ for every $p\geq 1$.

{F}rom this (see \eqref{LOCAM}), we also
conclude that $v$ is a local minimizer 
of \eqref{CALE}, which satisfies \eqref{LIMITS}. Finally, by~\eqref{FVA:1}
and~\eqref{FVA:2}, we obtain that~$v$ is constant on every interval $[2nr, 2(n+1)r)$, for $n\in \Z$. 
\end{proof} 
  
\section{Nonuniqueness issues, and proofs of Proposition \ref{corodis}
and Corollary \ref{coro1}}\label{NOAM:S}

Now, we provide the proof of Proposition \ref{corodis},  based on a   analogous argument as in the proof of Lemma~\ref{competit2}, Lemma~\ref{competit} and Theorem~\ref{exthm}. 
 \begin{proof}[Proof of Proposition \ref{corodis}] 
Fixed~$K\in\N$, we consider two finite minimization problems. The first one is the following 
\begin{equation}\label{dirdiscreto1}
m^{(1)}_K:=\min \left\{ \sum_{j=-K}^K \left(\frac{1}{2r^2} (w_{j+1}-w_{j})^2+ W(w_j)\right)
\right\},
\end{equation}
where the class of competitors $(w_j)$ is such that   $w_0=0$,  $w_j=-w_{-j}$ and $w_{j}=1$ for all~$j\ge K$. 

We note  that, for all $K>1$,
\begin{equation}\label{NANAAA}
m^{(1)}_K\leq\frac{1}{r^2}+W(0).\end{equation}
Indeed we choose the competitor $w_0:=0$ and $w_j:=1$ for all $j>0$,
and then~\eqref{NANAAA} plainly follows from~\eqref{dirdiscreto1}.

Moreover, by \eqref{dirdiscreto1}, we see that
$$ {\mbox{$m_{K+1}^{(1)}\leq m_{K}^{(1)}$ for all $K>1$. }}$$
The second finite minimization problem is the following: \begin{equation}\label{dirdiscreto2}
m^{(2)}_K:=\min \left\{ \sum_{j=-K}^K \left(\frac{1}{2r^2} (z_{j+1}-z_{j})^2+ W(z_j)\right)
\right\},
\end{equation}
where the class of competitors $(z_j)$ is such that   $z_{j}=-z_{-j-1}$ and $z_{j}=1$ for all~$j\ge K-1$. 

Choosing the competitor
$w_j:=1$ for all $j\geq 0$, we get that
$$ {\mbox{$m^{(2)}_K\leq \displaystyle\frac{2}{r^2}$ for all $K>1$.}}$$
Moreover, we see that
$$ {\mbox{$m_{K+1}^{(2)}\leq m_{K}^{(2)}$ for all $K>1$.}}$$
Arguing as in the proof of Lemma \ref{competit}, one can
show that
monotone sequences lower the energy,
and therefore, as in Proposition~\ref{exdir}, one obtains that,
for all $K>2$, there exist monotone nondecreasing solutions $(w^K_j)$ and $(z^K_j)$ respectively to the minimization problem \eqref{dirdiscreto1} and \eqref{dirdiscreto2}.  

Let us now fix  $(\phi_j)$ such that $\phi_0:=0$,  $\phi_j=-\phi_{-j}$ and $\phi_{j}:=0$ for all~$j\ge K-1$. So, 
the sequence $w_j^K+\delta \phi_j$ is an admissible competitor
for the minimization problem~\eqref{dirdiscreto1}
for all $\delta\in\R$. Therefore, by the minimality of $w_j^K$,
we conclude that 
\begin{equation}\label{FGYUSCVBHJ} \sum_{j=-K+1}^{K-1} \left(\frac{1}{r^2} (w^K_{j+1}-2w^K_{j}+w^K_{j-1})- W'(w^K_j) \right)\phi_j=0.\end{equation}
Now, fixed~$j_*\in (0, K-1)$, we choose $\phi_j$ such that $\phi_{j_*}:=1$ (and so  by assumption  $\phi_{-j_*}=-1$) and $0$ elsewhere. 
Substituting in~\eqref{FGYUSCVBHJ}, we thereby get that, for all $j\in (0, K-1)$, 
 \[\left(w^K_{j+1}-2 w^K_{j}+w^K_{j-1}-r^2 W'(w^k_{ j})\right)- \left(w^K_{- j+1}-2 w^K_{- j}+w^K_{- j-1}-r^2 W'(w^k_{- j})\right) =0.\] 
{F}rom this,
using the fact that $w^K_{j}=-w^K_{-j}$ and that $W'$ is an odd function on $[-1,1]$ by assumption \eqref{DOUBLEW}, we conclude that
\begin{equation}\label{eldis1}w^K_{j+1}-2 w^K_j+w^K_{j-1}= r^2 W'(w^k_j)\qquad \forall j\in (-K+1, K-1).\end{equation}
A similar argument gives that also $z^K_j$ satisfies \eqref{eldis1}, namely
\begin{equation}\label{eldis1 per z}z^K_{j+1}-2 z^K_j+z^K_{j-1}= r^2 W'(z^k_j)\qquad \forall j\in (-K+1, K-1).\end{equation}
Now, using the monotonicity of  $(w^K_j)$ and $(z^K_j)$, we can also take the limit as~$K\to+\infty$
for the sequences  $(w^K_j)$ and $(z^K_j)$.
In this way, we obtain two sequences that we denote by~$ (\bar w_j)$ and $(\bar z_j)$, respectively.

We notice that $(\bar w_i)$ and $(\bar z_i)$ are monotone nondecreasing. Also, they satisfy \eqref{1.13bis} and are odd, in the sense that 
$\bar w_0=0$ and $\bar w_n=-\bar w_{-n}$ for all $n>0$, whereas $\bar z_n=-\bar z_{-n-1}$. 
Moreover, using \eqref{eldis1} and~\eqref{eldis1 per z}, we get that they are solutions to \eqref{ricorsiva1}. This conclude the proof
of Proposition \ref{corodis}.\end{proof}

\begin{remark} Concerning the proof of Proposition~\ref{corodis},
we also observe that, arguing as in the proof of Theorem \ref{exthm}, we get that   
$(\bar w_j)$ is a solution to the minimization problem 
 \begin{equation}\label{dirdiscreto1fin}
\min \left\{ \sum_{j=-\infty}^{+\infty} \left(\frac{1}{2r^2} (w_{j+1}-w_{j})^2+ W(w_j)\right)
\right\}\end{equation} among all sequences such that  ~$w_j\in[-1,1]$ for all~$j\in\Z$, $w_0=0$,  $w_j=-w_{-j}$ and $\lim_{j\to +\infty} w_{j}=1$,
whereas $(\bar z_j)$ is a solution to the minimization problem 
 \begin{equation}\label{dirdiscreto2fin}
\min \left\{ \sum_{j=-\infty}^{+\infty} \left(\frac{1}{2r^2} (z_{j+1}-z_{j})^2+ W(z_j)\right)
\right\}\end{equation} among all sequences such that  ~$z_j\in[-1,1]$ for all~$j\in\Z$, $z_i=-z_{-i-1}$ for all $i\geq 0$ and  $\lim_{j\to +\infty} z_{j}=1$. 
\end{remark}

Now, we prove
Corollary \ref{coro1}, as a consequence of Proposition \ref{corodis}. 
 \begin{proof}[Proof of Corollary \ref{coro1}] 
Let  $(\bar w_n)_n$ and  $(\bar z_n)_n$
be as in Proposition \ref{corodis}. 
We define
$$ u(x):=\bar w_n\qquad{\mbox{ and }}\qquad
v(x):=\bar z_n\qquad{\mbox{for all }}x\in [2nr, 2nr+2r).$$
Then, $u$ and~$v$  are  monotone nondecreasing,
satisfy \eqref{LIMITS} and, in view of \eqref{ricorsiva1},
they are solutions to~\eqref{el1}, as desired.

We stress that~$u$ and~$v$ are geometrically different (namely,
they are not equal up to a translation). Indeed, if we define~$I_n:=[2nr, 2nr+2r)$,
we see that
\begin{equation}\label{DIVE}
{\mbox{$u=0$ on an odd number of intervals~$I_n$,
and $v=0$ on an even number of intervals~$I_n$.}}
\end{equation}
To check this, we first observe that~$u=\bar w_0=0$ in~$I_0$.
By monotonicity, we can take~$\bar n$ to be the greatest~$n$
for which~$u=0$ in~$I_n$, and then~$u=0$ in~$I_0\cup\dots I_{\bar n}$,
with~$u>0$ in~$I_{\bar n+1}$. Then, recalling~\eqref{PARDI}, it follows that~$u=0$
in~$I_{-1}\cup\dots I_{-\bar n}$,
with~$u<0$ in~$I_{-\bar n-1}$. This says that~$u=0$
in~$I_0\cup I_{\pm 1}\dots I_{\pm\bar n}$ and~$u\ne0$ elsewhere.

Similarly, suppose that~$v=0$ in some interval~$I_j$. {F}rom~\eqref{PARDI},
we conclude that~$v=0$ also in~$I_{-j-1}$. Since~$j$ cannot be equal to~$-j-1$,
this argument always provides a couple of intervals on which~$v=0$.
The proof of~\eqref{DIVE} is thereby complete.
\end{proof} 

\section{The Dirichlet problem for $D_r$, and proofs of
Theorem~\ref{DIRI}, and of Corollaries \ref{COROCONT} and~\ref{costanteDIR}}\label{secdir} 

In this section we provide the proofs of
Theorem~\ref{DIRI}, of Corollary \ref{COROCONT} and of Corollary~\ref{costanteDIR}.

First of all, we observe that uniqueness of solutions  to \eqref{DS} is a direct consequence of the following
Maximum Principle for the operator $D_r$. 
\begin{lemma}[Maximum Principle for $D_r$]\label{MAX PLE}
Let~$r>0$, $a<b$, 
and~$u\in L^\infty_{{\rm{loc}}}(\R)$
be such that
\begin{equation}\label{9KUWEQUA} \left\{\begin{matrix}
D_r u\ge0&{\mbox{ in }}(a,b),\\
u\le0&{\mbox{ in }}[a-r,a],\\
u\le0&{\mbox{ in }}[b,b+r].
\end{matrix}\right.\end{equation}
Then~$u\le0$  almost everywhere in~$[a-r,b+r]$.
\end{lemma}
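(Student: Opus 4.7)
The plan is to argue by contradiction, assuming that the essential supremum
$$M := \sup_{[a-r,b+r]} u > 0,$$
and to show by induction that the super-level sets $S_\epsilon := \{x \in (a,b) : u(x) > M-\epsilon\}$ are forced to collapse into regions arbitrarily far from the right boundary $\{b\}$, eventually becoming empty and contradicting that $|S_\epsilon|>0$ for every $\epsilon>0$. Observe first that since $u \le 0$ on $[a-r,a] \cup [b,b+r]$ and $M>0$, the essential supremum is actually attained in $(a,b)$, so $S_\epsilon$ does have positive measure for every $\epsilon>0$.

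The key step comes from the inequality $D_r u \ge 0$, which for a.e.\ $x \in (a,b)$ reads
$$u(x+r) \;\ge\; 2u(x) - u(x-r).$$
For a.e.\ $x \in S_\epsilon$, combining this with the essential bound $u(x-r) \le M$ yields $u(x+r) > M - 2\epsilon$. When $\epsilon < M/2$ this is strictly positive, and since $u \le 0$ on $[b,b+r]$, we must have $x+r < b$, so $S_\epsilon \subseteq (a,b-r)$ up to null sets. This is the base case $k=1$ of the following claim, which I would establish by induction on $k$: \emph{for every $k \ge 1$ and every $\epsilon < M/2^k$, one has $S_\epsilon \subseteq (a,b-kr)$ up to null sets} (with the convention that the interval is empty when $b-kr \le a$). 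The inductive step is the same argument repeated once more: if $\epsilon < M/2^{k+1}$, then $2\epsilon < M/2^k$, so by the inductive hypothesis $S_{2\epsilon} \subseteq (a,b-kr)$ up to null sets; as above, for a.e.\ $x \in S_\epsilon$ the point $x+r$ lies in $S_{2\epsilon}$, hence $x+r < b - kr$, i.e.\ $x < b-(k+1)r$.

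Choosing $k_0 \ge (b-a)/r$ makes the interval $(a, b-k_0 r)$ empty, so $S_\epsilon$ is a null set whenever $\epsilon < M/2^{k_0}$, contradicting $|S_\epsilon|>0$; hence $M \le 0$ and the conclusion follows. I expect the only real technical nuisance to be the bookkeeping of the ``almost everywhere'' aspect, since $D_r u \ge 0$ holds only a.e.\ and the bound $u \le M$ is essential. To handle this cleanly I would fix once and for all a full-measure subset $G \subseteq (a-r,b+r)$ on which the pointwise inequality $u(x+r)+u(x-r) \ge 2u(x)$ (at points of $G \cap (a,b)$) and the pointwise bound $u \le M$ hold simultaneously, and then perform all the set-inclusion arguments modulo null sets on this fixed good set; since translation by $r$ preserves null sets, no additional measure-theoretic difficulties arise during the iteration.
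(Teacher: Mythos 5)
Your proof is correct. Both your argument and the paper's rest on the same key observation: from $u(x+r)\ge 2u(x)-u(x-r)\ge 2u(x)-M$ (where $M$ is the essential supremum), near-maximal values of $u$ propagate one step to the right, which is incompatible with $u\le 0$ on $[b,b+r]$. Where the two proofs differ is in how this observation is implemented. The paper uses a compactness argument: it introduces the set $\mathcal{S}$ of points that are limits of density points along which $u\to M$, shows $\mathcal{S}$ has a maximum $x^\star$ lying in $(a,b)$, and derives a contradiction because $x^\star+r$ also belongs to $\mathcal{S}$. You instead iterate a quantitative inclusion on super-level sets, $S_\epsilon\subseteq(a,b-kr)$ for $\epsilon<M/2^k$, and take $k$ large enough to empty the interval. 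Your route avoids the somewhat delicate construction of ``approximate maximum points'' of an $L^\infty$ function (the definition of $\mathcal{S}$, the extraction arguments showing $x^\star=\max\mathcal{S}$ and $x^\star\in(a,b)$), and the measure-theoretic bookkeeping is more transparent, since everything is phrased in terms of a.e.\ inclusions of super-level sets and translation-invariance of null sets, exactly as you note. The price is a short induction and the explicit tracking of the factors $M/2^k$; but that is mild, and arguably the cleaner argument for a reader worried about what ``supremum'' and ``maximum point'' mean for an $L^\infty$ function.
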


\begin{proof} By contradiction, let us assume that
$$ \sigma:=\sup_{ [a-r,b+r] } u >0.$$
Let
\begin{equation} \label{DEFS}
\begin{split}
{\mathcal{S}}\,:=\,&\big\{ 
\bar x\in [a-r,b+r] {\mbox{ s.t. there exists a sequence $x_k\in[a-r,b+r]$}}\\
&{\mbox{such that
$x_k\to \bar x$, $x_k$ are density points for $u$,  and~$u(x_k)\to\sigma$ as $k\to+\infty$}}
\big\}.\end{split}\end{equation}
Since~${\mathcal{S}}\subseteq[a-r,b+r]$, we can define
\begin{equation} \label{DEFS2} x^\star:=\sup {\mathcal{S}}\in [a-r,b+r].\end{equation}
We claim that, in fact, this supremum is attained, and
\begin{equation}\label{9eofhi33848}
x^\star=\max {\mathcal{S}}.\end{equation}
To check this, fix~$m\in\N$, to be taken as large as we wish.
By~\eqref{DEFS2}, we know that there exists~$\bar x_m\in {\mathcal{S}}$
such that~$|x^\star-\bar x_m|\le 1/m$.
Then, by~\eqref{DEFS}, we know that there exists
a sequence~$x_{k,m}\in[a-r,b+r]$ such that
$$ \lim_{k\to+\infty} x_{k,m}= \bar x_m
\qquad{\mbox{and}}\qquad\lim_{k\to+\infty} u(x_{k,m})=\sigma.$$
In particular, there exists~$k_m\in\N$ such that if~$k\ge k_m$ then
$$ |x_{k,m}- \bar x_m|\le\frac1m
\qquad{\mbox{and}} \qquad| u(x_{k,m})-\sigma|\le\frac1m.$$
Let now~$x^\star_m:=x_{k_m,m}$. By construction,
$$ |x^\star_m-x^\star|\le
|x_{k_m,m}- \bar x_m|+|\bar x_m-x^\star|\le\frac2m
\qquad{\mbox{and}} \qquad| u(x^\star_m)-\sigma|\le\frac1m.$$
This and~\eqref{DEFS} imply that~$x^\star\in{\mathcal{S}}$, which,
combined with~\eqref{DEFS2}, gives~\eqref{9eofhi33848}, as desired.

We also claim that
\begin{equation}\label{9eofhi33848BIS}
x^\star\in(a,b).\end{equation}
To check this, suppose, by contradiction, that~$x^\star\in[a-r, a]$ (the
case~$x^\star\in[ b,b+r]$
is similar). Then it must be that
\begin{equation}\label{9eofhi33848TRIS}
x^\star=a.\end{equation}
Indeed, if~$x^\star\in[a-r,a)$
and~$x_k\to x^\star$ as~$k\to+\infty$, we have that~$x_k\in[a-r,a)$
for large~$k$, and thus
$$ \sigma=\lim_{k\to+\infty} u(x_k)\leq 0,$$
which is a contradiction.

Then, in view of~\eqref{9eofhi33848TRIS}, we can take
a sequence~$y_k>a$ such that
$$ \lim_{k\to+\infty} y_k=a=x^\star\qquad{\mbox{and}}\qquad\lim_{k\to+\infty}u(y_k)=\sigma.
$$
So  we can exploit~\eqref{9KUWEQUA} at the point~$y_k$, for $k$ sufficiently large such that $y_k-r<a$, 
and write that
\begin{eqnarray*}
0 &\leq & r^2 D_r u(y_k)\\
&=& u(y_k+r)+u(y_k-r)-2u(y_k)\\
&\le& u(y_k+r)-2u(y_k)\\
&\le& \sigma-2u(y_k).
\end{eqnarray*}
Then, passing to the limit as~$k\to+\infty$, we obtain that~$0\le\sigma-2\sigma=-\sigma$,
which is a contradiction. The proof of~\eqref{9eofhi33848BIS} is thereby complete.

Hence, in light of~\eqref{9eofhi33848} and~\eqref{9eofhi33848BIS}, we can
take a sequence~$x_k\to x^\star$ with~$u(x_k)\to u(x^\star)$
as~$k\to+\infty$ and
exploit~\eqref{9KUWEQUA} at the point~$x_k$. In this way,
we find that
$$ 0\leq  u(x_k+r)+u(x_k-r)-2u(x_k)\le u(x_k+r)+\sigma-2u(x_k),$$
and therefore
$$ 0\le \lim_{k\to+\infty} \Big( u(x_k+r)+\sigma-2u(x_k) \Big)
=\lim_{k\to+\infty} u(x_k+r)-\sigma.$$
Consequently, writing~$\tilde x_k:=x_k+r$ and~$\tilde x:=x^\star+r>x^\star$,
we see that~$\tilde x_k\to\tilde x$ and~$u(\tilde x_k)\to\sigma$
as~$k\to+\infty$. This gives that~$\tilde x\in{\mathcal{S}}$, which is
in contradiction with~\eqref{9eofhi33848}.
\end{proof}

\begin{proof}[Proof of Theorem~\ref{DIRI}.]
The uniqueness statement plainly follows from Lemma~\ref{MAX PLE}, hence
we focus on the proof of the fact that the function in \eqref{SOLUZIONE} is a solution to ~\eqref{DS}, which is a direct, albeit tricky, computation.
To this end, we observe that, by~\eqref{DK},
\begin{equation}\label{DKK}\begin{split}&
\overline k(x+r)=\overline k(x)-1,\qquad
\overline k(x-r)=\overline k(x)+1,\qquad\\&
\underline k(x+r)=\underline k(x)+1\qquad
\underline k(x-r)=\underline k(x)-1,\end{split}
\end{equation}
thus
\begin{equation}\label{DKKBIS}
\overline k(x\pm r)+\underline k(x\pm r)=\overline k(x)+\underline k(x)
,\end{equation}
and, moreover,
\begin{equation}\label{DKKK}
\begin{split}&
{\mbox{if~$y\in(a-r,a]$ and~$z\in[b,b+r)$,}}\\&{\mbox{then
$\overline k(y),\underline k(z)\in\left[\displaystyle\frac{b-a}r,\,\displaystyle\frac{b-a}r+1\right)$,
and~$\underline k(y)=0=\overline k(z)$.}}\end{split}
\end{equation}
Now, to check~\eqref{DS}, fixed~$x\in(a,b)$, we need to distinguish four cases:
\begin{eqnarray}
&& \label{CASO1} x+r\in[b,b+r) {\mbox{ and }} x-r\in(a-r,a],\\
&& \label{CASO2} x+r\in(a,b) {\mbox{ and }} x-r\in(a-r,a],\\
&& \label{CASO3} x+r\in[b,b+r) {\mbox{ and }} x-r\in(a,b)\\{\mbox{and }}&&
\label{CASO4} x+r\in(a,b) {\mbox{ and }} x-r\in(a,b).
\end{eqnarray}
We start by taking into account the case in~\eqref{CASO1}.
Then, by~\eqref{DKKK}, we have that~$\underline k(x-r)=0=\overline k(x+r)$,
and so, by~\eqref{DKK}, we get that~$\underline k(x)=1=\overline k(x)$,
therefore
\begin{eqnarray*}&&
u(x+r)+u(x-r)-2u(x)\\&=&\beta(x+r)+\alpha(x-r)-\Bigg(
\alpha(x-r)+
\beta(x+r) 
-r^2\,f(x)\Bigg)\\
&=& r^2\,f(x),
\end{eqnarray*}
which proves~\eqref{DS} in this case.

Let us now suppose that~\eqref{CASO2} holds true.
Then, by~\eqref{DKK} and~\eqref{DKKK},
we have that~$\underline k(x)=1+
\underline k(x-r)=1$ and consequently, recalling also~\eqref{DKKBIS},
\begin{eqnarray*}&&
u(x+r)+u(x-r)-2u(x)\\&=&
\frac{1}{\overline k(x)+1}\,\Bigg(
(\overline k(x)-1)\,\alpha(x-r)+
2\beta(x+\overline k (x)r)
-r^2(\overline k(x)-1)\,f(x) \\ &&\qquad
-2r^2\,\sum_{j=1}^{\overline k(x)-2} j f\big(x+(\overline k(x)-j)r\big) 
-2r^2(\overline k(x)-1)\,f(x+r)\Bigg)
+\alpha(x-r)\\&&\qquad-
\frac{2}{\overline k(x)+1}\,\Bigg(
\overline k(x)\,\alpha(x-r)+
\beta(x+\overline k (x)r)
-r^2\,\sum_{j=1}^{\overline k(x)-1} j f\big(x+(\overline k(x)-j)r\big) 
-r^2\overline k(x)\,f(x)\Bigg)\\&=&
\frac{1}{\overline k(x)+1}\,\Bigg(
-r^2(\overline k(x)-1)\,f(x) 
-2r^2\,\sum_{j=1}^{\overline k(x)-2} j f\big(x+(\overline k(x)-j)r\big) 
-2r^2(\overline k(x)-1)\,f(x+r)\\&&\qquad
+2r^2\,\sum_{j=1}^{\overline k(x)-1} j f\big(x+(\overline k(x)-j)r\big) 
+2r^2\overline k(x)\,f(x)\Bigg)\\
&=&
\frac{1}{\overline k(x)+1}\,\Bigg(
-r^2(\overline k(x)-1)\,f(x) 
-2r^2(\overline k(x)-1)\,f(x+r)
+2r^2(\overline k(x)-1)\,f(x+r) 
+2r^2\overline k(x)\,f(x)\Bigg)
\\ &=& r^2\,f(x).
\end{eqnarray*}
This proves~\eqref{DS} in this case,
and we now consider the case in~\eqref{CASO3}.
In this case, by~\eqref{DKK} and~\eqref{DKKK},
we have that~$\overline k(x)=1+
\overline k(x+r)=1$ and consequently, recalling also~\eqref{DKKBIS},
\begin{eqnarray*}&&
u(x+r)+u(x-r)-2u(x)\\&=&\beta(x+r)+
\frac{1}{1+\underline k(x)}\,\Bigg(
2\,\alpha(x-\underline k (x)r)+
(\underline k(x)-1)\,\beta(x+r)
-2r^2\,\sum_{j=1}^{\underline k(x)-2} j f\big(x-(\underline k(x)-j)r\big) \\&&\qquad
-r^2(\underline k(x)-1)\, f(x) 
-2r^2\,(\underline k(x)-1)\,f(x-r)\Bigg)
\\ &&\qquad-
\frac{2}{1+\underline k(x)}\,\Bigg(
\alpha(x-\underline k (x)r)+
\underline k(x)\,\beta(x+r)
-r^2\,\sum_{j=1}^{\underline k(x)-1} j f\big(x-(\underline k(x)-j)r\big) 
-r^2\,\underline k(x)\,f(x)\Bigg)\\&=&
\frac{1}{1+\underline k(x)}\,\Bigg(
-2r^2\,\sum_{j=1}^{\underline k(x)-2} j f\big(x-(\underline k(x)-j)r\big) 
-r^2(\underline k(x)-1)\, f(x) 
-2r^2\,(\underline k(x)-1)\,f(x-r)
\\ &&\qquad+
2r^2\,\sum_{j=1}^{\underline k(x)-1} j f\big(x-(\underline k(x)-j)r\big) 
+2r^2\,\underline k(x)\,f(x)\Bigg)\\&=&
\frac{1}{1+\underline k(x)}\,\Bigg(
-r^2(\underline k(x)-1)\, f(x) 
-2r^2\,(\underline k(x)-1)\,f(x-r)
+2r^2\,(\underline k(x)-1) \,f(x-r) 
+2r^2\,\underline k(x)\,f(x)\Bigg)\\&=& r^2\,f(x).
\end{eqnarray*}
This proves~\eqref{DS} in this case. So, it only remains to check~\eqref{CASO4}.
To this end,
\begin{eqnarray*}&&
u(x+r)+u(x-r)-2u(x)\\&=&
\frac{1}{\overline k(x)+\underline k(x)}\,\Bigg(
(\overline k(x)-1)\,\alpha(x-\underline k (x)r)+
(\underline k(x)+1)\,\beta(x+\overline k (x)r)
-r^2(\overline k(x)-1)\,\sum_{j=1}^{\underline k(x)} j f\big(x-(\underline k(x)-j)r\big) 
\\&&\qquad-r^2(\underline k(x)+1)\,\sum_{j=1}^{\overline k(x)-2} j f\big(x+(\overline k(x)-j)r\big)
-r^2(\overline k(x)-1)(\underline k(x)+1)\,f(x+r)\Bigg)\\&&\qquad+
\frac{1}{\overline k(x)+\underline k(x)}\,\Bigg(
(\overline k(x)+1)\,\alpha(x-\underline k (x)r)+
(\underline k(x)-1)\,\beta(x+\overline k (x)r)\\&&\qquad
-r^2(\overline k(x)+1)\,\sum_{j=1}^{\underline k(x)-2} j f\big(x-(\underline k(x)-j)r\big) 
-r^2(\underline k(x)-1)\,\sum_{j=1}^{\overline k(x)} j f\big(x+(\overline k(x)-j)r\big) \\
&&\qquad-r^2(\overline k(x)+1)(\underline k(x)-1)\,f(x-r)\Bigg)
\\&&\qquad-\frac{2}{\overline k(x)+\underline k(x)}\,\Bigg(
\overline k(x)\,\alpha(x-\underline k (x)r)+
\underline k(x)\,\beta(x+\overline k (x)r)
-r^2\overline k(x)\,\sum_{j=1}^{\underline k(x)-1} j f\big(x-(\underline k(x)-j)r\big) \\&&\qquad
-r^2\underline k(x)\,\sum_{j=1}^{\overline k(x)-1} j f\big(x+(\overline k(x)-j)r\big) 
-r^2\overline k(x)\,\underline k(x)\,f(x)\Bigg)
\\ &=&
\frac{1}{\overline k(x)+\underline k(x)}\,\Bigg(
-r^2(\overline k(x)-1)\,\sum_{j=1}^{\underline k(x)} j f\big(x-(\underline k(x)-j)r\big) 
\\&&\qquad-r^2(\underline k(x)+1)\,\sum_{j=1}^{\overline k(x)-2} j f\big(x+(\overline k(x)-j)r\big)
-r^2(\overline k(x)-1)(\underline k(x)+1)\,f(x+r)\\&&\qquad
-r^2(\overline k(x)+1)\,\sum_{j=1}^{\underline k(x)-2} j f\big(x-(\underline k(x)-j)r\big) 
-r^2(\underline k(x)-1)\,\sum_{j=1}^{\overline k(x)} j f\big(x+(\overline k(x)-j)r\big) \\
&&\qquad-r^2(\overline k(x)+1)(\underline k(x)-1)\,f(x-r)
+2r^2\overline k(x)\,\sum_{j=1}^{\underline k(x)-1} j f\big(x-(\underline k(x)-j)r\big) \\&&\qquad
+2r^2\underline k(x)\,\sum_{j=1}^{\overline k(x)-1} j f\big(x+(\overline k(x)-j)r\big) 
+2r^2\overline k(x)\,\underline k(x)\,f(x)\Bigg)
\\ &=&
\frac{1}{\overline k(x)+\underline k(x)}\,\Bigg(
-r^2(\overline k(x)-1)\,\sum_{j=1}^{\underline k(x)-2} j f\big(x-(\underline k(x)-j)r\big) 
\\&&\qquad-r^2(\overline k(x)-1)(\underline k(x)-1)\, f(x-r) 
-r^2(\overline k(x)-1)\,\underline k(x)\,f(x) 
\\&&\qquad-r^2(\underline k(x)+1)\,\sum_{j=1}^{\overline k(x)-2} j f\big(x+(\overline k(x)-j)r\big)
-r^2(\overline k(x)-1)(\underline k(x)+1)\,f(x+r)\\&&\qquad
-r^2(\overline k(x)+1)\,\sum_{j=1}^{\underline k(x)-2} j f\big(x-(\underline k(x)-j)r\big) 
-r^2(\underline k(x)-1)\,\sum_{j=1}^{\overline k(x)-2} j f\big(x+(\overline k(x)-j)r\big) 
\\&&\qquad-r^2(\underline k(x)-1)(\overline k(x)-1)\,f(x+r)
-r^2(\underline k(x)-1)\,\overline k(x)\,f(x)
\\
&&\qquad-r^2(\overline k(x)+1)(\underline k(x)-1)\,f(x-r)
+2r^2\overline k(x)\,\sum_{j=1}^{\underline k(x)-2} j f\big(x-(\underline k(x)-j)r\big) 
\\&&\qquad+2r^2\overline k(x)\,(\underline k(x)-1)\, f(x-r) 
+2r^2\underline k(x)\,\sum_{j=1}^{\overline k(x)-2} j f\big(x+(\overline k(x)-j)r\big) 
\\&&\qquad
+2r^2\underline k(x)\,(\overline k(x)-1)\, f(x+r) 
+2r^2\overline k(x)\,\underline k(x)\,f(x)\Bigg)\\&=&
\frac{1}{\overline k(x)+\underline k(x)}\,\Big(
-r^2(\overline k(x)-1)(\underline k(x)-1)\, f(x-r) 
-r^2(\overline k(x)-1)\,\underline k(x)\,f(x) 
\\&&\qquad
-r^2(\overline k(x)-1)(\underline k(x)+1)\,f(x+r)-r^2(\underline k(x)-1)(\overline k(x)-1)\,f(x+r)
-r^2(\underline k(x)-1)\,\overline k(x)\,f(x)
\\
&&\qquad-r^2(\overline k(x)+1)(\underline k(x)-1)\,f(x-r)+2r^2\overline k(x)\,(\underline k(x)-1)\, f(x-r) 
+2r^2\underline k(x)\,(\overline k(x)-1)\, f(x+r) 
\\&&\qquad+2r^2\overline k(x)\,\underline k(x)\,f(x)\Big)\\
&=& r^2\,f(x),
\end{eqnarray*}
which completes the proof of~\eqref{DS} in this case as well.
The proof of Theorem~\ref{DIRI} is thereby finished.
\end{proof} 

Finally, under the assumption that the functions $\alpha, \beta,f$ are continuous, we study the regularity of the solution to \eqref{DS} given by Theorem \ref{DIRI}. 
\begin{proof}[Proof of Corollary~\ref{COROCONT}]
We start with the proof of~\eqref{BOUND LINF}.
First of all, if~$x\in[a-r,a]$, then~$u(x)=\alpha(x)$,
which obviously fulfills the bound in~\eqref{BOUND LINF}.
Similarly, if~$x\in[b,b+r]$, we have that~$y:=x+a-b-r\in[a-r,a]$
and we
have that
$$ |u(x)|=|\beta(x)|\le |\alpha(y)|+|\alpha(y)-\beta(x)|
=|\alpha(y)|+|\alpha(y)-\beta(y-a+b+r)|,
$$
and the bound in~\eqref{BOUND LINF} also follows in this case.
Hence, it remains to prove~\eqref{BOUND LINF} when~$x\in(a,b)$.
In this case, we write~\eqref{SOLUZIONE} 
as
\begin{equation}\label{9620002-1193480}
\begin{split}
u(x)\,&=\alpha(x-\underline k (x)r)+
\frac{\underline k(x)}{\overline k(x)+\underline k(x)}\, 
 \,\big(\beta(x+\overline k (x)r)
-\alpha(x-\underline k (x)r)\big) 
\\&\qquad - \frac{r^2}{\overline k(x)+\underline k(x)}\, \left(
\overline k(x)\,\sum_{j=1}^{\underline k(x)-1} j f\big(x-(\underline k(x)-j)r\big) 
+\underline k(x)\,\sum_{j=1}^{\overline k(x)-1} j f\big(x+(\overline k(x)-j)r\big) 
+\overline k(x)\,\underline k(x)\,f(x)\right)
\end{split}
\end{equation}
and therefore
\begin{equation}\label{546371230x93}
\begin{split}
|u(x)|\,&\le \|\alpha\|_{L^\infty([a-r,a])}+
\frac{\underline{k}(x)}{\overline k(x)+\underline k(x)}\,\sup_{{p\in[a-r,a]}\atop{q\in[b,b+r]}}|\alpha(p)-\beta(q)|
\\& 
+ \frac{r^2}{\overline k(x)+\underline k(x)}\|f\|_{L^\infty([a,b])}
\left(\overline{k}(x) \frac{(\underline k(x)-1)\underline{k}(x)}{2} +\underline{k}(x) \frac{(\overline k(x)-1)\overline{k}(x)}{2}+
\overline{k}(x)  \underline{k}(x)\right)\\
&\le \|\alpha\|_{L^\infty([a-r,a])}+
\sup_{{p\in[a-r,a]}\atop{q\in[b,b+r]}}|\alpha(p)-\beta(q)|
+r^2
\,\|f\|_{L^\infty([a,b])}\frac{\overline{k}(x)  \underline{k}(x)}{2}.
\end{split}
\end{equation}
We also observe that, by~\eqref{DK},
\begin{equation}\label{9620002-119348} 
\overline k(x)\le \frac{b-x}{r}+1\le\frac{b-a}{r}+1\qquad
{\mbox{and }}\qquad
 \underline k(x)\le \frac{x-a}{r}+1\le\frac{b-a}{r}+1.\end{equation}
Therefore
$$ r^2 \frac{\overline{k}(x)  \underline{k}(x)}{2}\le  \frac{r^2}{2} \left(\frac{b-a+r}{r}\right)^2\leq (b-a)^2+ r^2.$$
Inserting this into~\eqref{546371230x93},
we thus obtain the bound in~\eqref{BOUND LINF}, as desired.

We also observe that the continuity of~$u$ outside~${\mathcal{J}}$
claimed in the statement of Corollary~\ref{COROCONT} follows directly
from~\eqref{SOLUZIONE} and so, to complete the proof
of Corollary~\ref{COROCONT}, it only remains to check~\eqref{JUMPATMOST}.
To this end, we observe that, in view of~\eqref{DK},
$$ \overline k(x)+\underline k(x)\ge\frac{b-x}r+\frac{x-a}r=\frac{b-a}r.$$
Using this, \eqref{9620002-1193480} and~\eqref{9620002-119348},
the claim in~\eqref{JUMPATMOST} follows
by the fact that~$\overline k$ and~$\underline k$ have at most unit jumps.
\end{proof}

\begin{proof}[Proof of Corollary~\ref{costanteDIR}]
As can be easily checked from~\eqref{SOLUZIONE}, the solution to \eqref{DS}   is given by 
\begin{equation}\label{functu}
u(x):= \left\{\begin{matrix}
0& {\mbox{ if }}x\in [-r,0],
\\
\frac{\underline{k}(x)}{\overline{k}(x)+\underline{k}(x)} & {\mbox{ if }}x\in (0,1),
\\
1&{\mbox{ if }}x\in [1,1+r].
\end{matrix}\right.\end{equation}
Observe that this function is discontinuous  at $x\in (0,1)$ such that $x\not\in r\N$. 
Indeed, recalling that $n=\frac{1}{r}\in \N$ and  $x\in (0,1)$, we observe that     
\begin{eqnarray*}
&& \underline{k}(x)= \left\lceil \frac{x}{r}\right\rceil=
\left\lceil nx\right\rceil\qquad {\mbox{ and}}\\
&&
\overline{k}(x)=\left\lceil \frac{1-x}{r}\right\rceil
=\left\lceil n-nx\right\rceil =\begin{cases}
n- nx= n-\underline{k}(x)=\frac{1}{r}-\underline{k}(x) & {\mbox{ if }}nx=
\frac{x}{r}\in \N,\\ 
n-\underline{k}(x)-1=\frac{1}{r}-1-\underline{k}(x) & {\mbox{ if }} nx=
\frac{x}{r}\not \in \N,
\end{cases} \end{eqnarray*}
where~$\lceil x\rceil$ denotes the smallest integer~$z$ such that~$x\le z$.

As a consequence,
\[\underline{k}(x)+\overline{k}(x)= \begin{cases} \frac{1}{r} & {\mbox{ if }}
x\in r\N,\\ 
\frac{1}{r}-1=\frac{1-r}{r} &{\mbox{ if }}  x\not\in r\N.  \end{cases}\]
Therefore the function $u$ defined in~\eqref{functu} can be written as
$$ u(x):= \begin{cases} 
0&{\mbox{ if }}x\in [-r,0],
\\ x &{\mbox{ if }} x\in (0,1) {\mbox{ and }}x\in r\N,\\  
\frac{r}{1-r}\left\lceil \frac{x}{r}\right\rceil &{\mbox{ if }} x\in (0,1)
{\mbox{ and }} x\not\in r\N,\\ 
1&{\mbox{ if }}x\in [1,1+r].
\end{cases}$$ 
Note  in particular  that $u(x)=0$ for all $x\in [-r, r)$, $u(x)=1$ for all $x\in (1-r, 1+r]$,
$0\leq u(x)\leq 1$ for all $x\in [-r, 1+r]$, and~$u$ has jump discontinuities
at the points of the form~$x=\frac{k}{n}$ with~$k\in\{1,\dots,n-1\}$.
\end{proof}

\bigskip

\begin{bibdiv}
 
\begin{biblist}[\normalsize]

\bib{MR1612250}{article}{
   author={Alberti, Giovanni},
   author={Bellettini, Giovanni},
   title={A nonlocal anisotropic model for phase transitions. I. The optimal
   profile problem},
   journal={Math. Ann.},
   volume={310},
   date={1998},
   number={3},
   pages={527--560},
   issn={0025-5831},
   review={\MR{1612250}},
   doi={10.1007/s002080050159},
}

\bib{A2}{article}{
   author={Alves, Claudianor},
   author={Ambrosio, Vincenzo},
   author={Torres Ledesma, C\'esar E.},
title={Existence of heteroclinic solutions for a class of problems involving
the fractional Laplacian},
journal={Anal. Appl. (Singap.)},
 doi={10.1142/S0219530518500252},
}

\bib{MR1857292}{book}{
   author={Ambrosio, Luigi},
   author={Fusco, Nicola},
   author={Pallara, Diego},
   title={Functions of bounded variation and free discontinuity problems},
   series={Oxford Mathematical Monographs},
   publisher={The Clarendon Press, Oxford University Press, New York},
   date={2000},
   pages={xviii+434},
   isbn={0-19-850245-1},
   review={\MR{1857292}},
}

\bib{a}{article}{
   author={Ambrosio, Vincenzo},
   title={Existence of heteroclinic solutions for a pseudo-relativistic
   Allen-Cahn type equation},
   journal={Adv. Nonlinear Stud.},
   volume={15},
   date={2015},
   number={2},
   pages={395--414},
   issn={1536-1365},
   review={\MR{3337880}},
   doi={10.1515/ans-2015-0207},
}

\bib{MR2728706}{article}{
   author={Barchiesi, M.},
   author={Kang, S. H.},
   author={Le, T. M.},
   author={Morini, M.},
   author={Ponsiglione, M.},
   title={A variational model for infinite perimeter segmentations based on
   Lipschitz level set functions: denoising while keeping finely oscillatory
   boundaries},
   journal={Multiscale Model. Simul.},
   volume={8},
   date={2010},
   number={5},
   pages={1715--1741},
   issn={1540-3459},
   review={\MR{2728706}},
   doi={10.1137/090773659},
}

\bib{cs}{article}{
   author={Cabr\'{e}, Xavier},
   author={Sire, Yannick},
   title={Nonlinear equations for fractional Laplacians II: Existence,
   uniqueness, and qualitative properties of solutions},
   journal={Trans. Amer. Math. Soc.},
   volume={367},
   date={2015},
   number={2},
   pages={911--941},
   issn={0002-9947},
   review={\MR{3280032}},
   doi={10.1090/S0002-9947-2014-05906-0},
}

\bib{cdnv}{article}{
   author={Cesaroni, Annalisa},
   author={Dipierro, Serena},
   author={Novaga, Matteo},
   author={Valdinoci, Enrico},
   title={Minimizers for nonlocal perimeters of Minkowski type},
   journal={Calc. Var. Partial Differential Equations},
   volume={57},
   date={2018},
   number={2},
   pages={Art. 64, 40},
   issn={0944-2669},
   review={\MR{3775182}},
   doi={10.1007/s00526-018-1335-9},
}

\bib{cdnv2}{article}{
    AUTHOR = {Cesaroni, Annalisa},
        author={Dipierro, Serena},
    author={Novaga, Matteo},
        author={Valdinoci, Enrico},
     TITLE = {Minimizers of the $p$-oscillation functional},
  journal = {ArXiv e-prints},
   eprint = {1803.01371},
     date = {2018},
   adsurl = {http://adsabs.harvard.edu/abs/2018arXiv180301371C}
   }

\bib{cn}{article}{
   author={Cesaroni, Annalisa},
   author={Novaga, Matteo},
   title={Isoperimetric problems for a nonlocal perimeter of Minkowski type},
   journal={Geom. Flows},
   volume={2},
   date={2017},
   pages={86--93},
   issn={2353-3382},
   review={\MR{3733869}},
   doi={10.1515/geofl-2017-0003},
}

\bib{MR2655948}{article}{
   author={Chambolle, Antonin},
   author={Giacomini, Alessandro},
   author={Lussardi, Luca},
   title={Continuous limits of discrete perimeters},
   journal={M2AN Math. Model. Numer. Anal.},
   volume={44},
   date={2010},
   number={2},
   pages={207--230},
   issn={0764-583X},
   review={\MR{2655948}},
   doi={10.1051/m2an/2009044},
}
 
\bib{MR3187918}{article}{
   author={Chambolle, Antonin},
   author={Lisini, Stefano},
   author={Lussardi, Luca},
   title={A remark on the anisotropic outer Minkowski content},
   journal={Adv. Calc. Var.},
   volume={7},
   date={2014},
   number={2},
   pages={241--266},
   issn={1864-8258},
   review={\MR{3187918}},
   doi={10.1515/acv-2013-0103},
}

\bib{MR3023439}{article}{
   author={Chambolle, Antonin},
   author={Morini, Massimiliano},
   author={Ponsiglione, Marcello},
   title={A nonlocal mean curvature flow and its semi-implicit time-discrete
   approximation},
   journal={SIAM J. Math. Anal.},
   volume={44},
   date={2012},
   number={6},
   pages={4048--4077},
   issn={0036-1410},
   review={\MR{3023439}},
   doi={10.1137/120863587},
}

\bib{MR3401008}{article}{
  author={Chambolle, Antonin},
   author={Morini, Massimiliano},
   author={Ponsiglione, Marcello},
   title={Nonlocal curvature flows},
   journal={Arch. Ration. Mech. Anal.},
   volume={218},
   date={2015},
   number={3},
   pages={1263--1329},
   issn={0003-9527},
   review={\MR{3401008}},
   doi={10.1007/s00205-015-0880-z},
}

\bib{cmy}{article}{
    AUTHOR = {Chen, Ko-Shin}, 
    author={Muratov, Cyrill},
    author={Yan, Xiaodong},
     TITLE = {Layer solutions for a one-dimensional nonlocal model of
              {G}inzburg-{L}andau type},
   JOURNAL = {Math. Model. Nat. Phenom.},
    VOLUME = {12},
      YEAR = {2017},
    NUMBER = {6},
     PAGES = {68--90},
      ISSN = {0973-5348},
       DOI = {10.1051/mmnp/2017068},
       URL = {https://doi.org/10.1051/mmnp/2017068},
}

%\bib{COURANT77}{book}{
%   author={Courant, Richard},
%   title={Dirichlet's principle, conformal mapping, and minimal surfaces},
%   note={With an appendix by M. Schiffer;
%   Reprint of the 1950 original},
%   publisher={Springer-Verlag, New York-Heidelberg},
%   date={1977},
%   pages={xi+332},
%   isbn={0-387-90246-5},
%   review={\MR{0454858}},
%}
%
%\bib{2016arXiv160503794C}{article}{
%   author={Cozzi, M.},
%   author={Dipierro, S.},
%   author={Valdinoci, E.},
%   title={Nonlocal phase transitions in homogeneous and periodic media},
%   journal={J. Fixed Point Theory Appl.},
%   volume={19},
%   date={2017},
%   number={1},
%   pages={387--405},
%   issn={1661-7738},
%   review={\MR{3625078}},
%   doi={10.1007/s11784-016-0359-z},
%} 

\bib{MR3460227}{article}{
   author={Cozzi, Matteo},
   author={Passalacqua, Tommaso},
   title={One-dimensional solutions of non-local Allen-Cahn-type equations
   with rough kernels},
   journal={J. Differential Equations},
   volume={260},
   date={2016},
   number={8},
   pages={6638--6696},
   issn={0022-0396},
   review={\MR{3460227}},
   doi={10.1016/j.jde.2016.01.006},
}

\bib{dnv}{article}{
   author={Dipierro, Serena},
      author={Novaga, Matteo},
   author={Valdinoci, Enrico},
   title={On a Minkowski geometric flow in the plane: Evolution of curves with lack of scale invariance},
   journal={J. London Math. Soc.},
    doi={10.1112/jlms.12162},
}

\bib{MR3594365}{article}{
   author={Dipierro, Serena},
   author={Patrizi, Stefania},
   author={Valdinoci, Enrico},
   title={Chaotic orbits for systems of nonlocal equations},
   journal={Comm. Math. Phys.},
   volume={349},
   date={2017},
   number={2},
   pages={583--626},
   issn={0010-3616},
   review={\MR{3594365}},
   doi={10.1007/s00220-016-2713-9},
}

\bib{JAaasdfgN}{article}{
   author = {Dipierro, Serena},
   author = {Patrizi, Stefania},
   author = {Valdinoci, Enrico},
    title = {Heteroclinic connections for nonlocal equations},
  journal = {ArXiv e-prints},
   eprint = {1711.01491},
   date = {2017},
   adsurl = {http://adsabs.harvard.edu/abs/2017arXiv171101491D},
}

\bib{psv}{article}{
   author={Palatucci, Giampiero},
   author={Savin, Ovidiu},
   author={Valdinoci, Enrico},
   title={Local and global minimizers for a variational energy involving a
   fractional norm},
   journal={Ann. Mat. Pura Appl. (4)},
   volume={192},
   date={2013},
   number={4},
   pages={673--718},
   issn={0373-3114},
   review={\MR{3081641}},
   doi={10.1007/s10231-011-0243-9},
}

\bib{hxt}{article}{
   author={Shi, Haiping},
   author={Liu, Xia},
   author={Zhou, Tao},
   title={Heteroclinic orbits of a second order nonlinear difference
   equation},
   journal={Electron. J. Differential Equations},
   date={2017},
   pages={Paper No. 260, 9},
   issn={1072-6691},
   review={\MR{3723533}},
}

\bib{xiao1}{article}{
   author={Xiao, Huafeng},
   author={Yu, Jianshe},
   title={Heteroclinic orbits for a discrete pendulum equation},
   journal={J. Difference Equ. Appl.},
   volume={17},
   date={2011},
   number={9},
   pages={1267--1280},
   issn={1023-6198},
   review={\MR{2825246}},
   doi={10.1080/10236190903167991},
}

% 
%\bib{MR1930621}{article}{
%   author={Novaga, M.},
%   author={Paolini, E.},
%   title={Regularity results for some 1-homogeneous functionals},
%   journal={Nonlinear Anal. Real World Appl.},
%   volume={3},
%   date={2002},
%   number={4},
%   pages={555--566},
%   issn={1468-1218},
%   review={\MR{1930621}},
%   doi={10.1016/S1468-1218(01)00048-7},
%}

%\bib{MR2745195}{article}{
%      author={Novaga, M.},
%   author={Valdinoci, E.},
%   title={Bump solutions for the mesoscopic Allen-Cahn equation in periodic
%   media},
%   journal={Calc. Var. Partial Differential Equations},
%   volume={40},
%   date={2011},
%   number={1-2},
%   pages={37--49},
%   issn={0944-2669},
%   review={\MR{2745195}},
%   doi={10.1007/s00526-010-0332-4},
%}

\end{biblist}\end{bibdiv}

\end{document}